\newif\iffullversion
\numberwithin{equation}{section}
\newtheorem{teo}{Theorem}[section]
\newtheorem{pro}[teo]{Proposition}
\newtheorem{lem}[teo]{Lemma}
\newtheorem{cor}[teo]{Corollary}
\theoremstyle{definition}
\newtheorem{notation}[teo]{Notation}
\theoremstyle{remark}
\newtheorem{rem}[teo]{Remark}
\newcommand{\calB}{\mathcal{B}}
\newcommand{\calD}{\mathcal{D}}
\newcommand{\calO}{\mathcal{O}}
\newcommand{\calL}{\mathcal{L}}
\newcommand{\calM}{\mathcal{M}}
\newcommand{\calN}{\mathcal{N}}
\newcommand{\calX}{\mathcal{X}}
\newcommand{\GL}{\operatorname{GL}}
\newcommand{\PGL}{\operatorname{PGL}}
\newcommand{\SL}{\operatorname{SL}}
\newcommand{\SU}{\operatorname{SU}}
\newcommand{\Pic}{\operatorname{Pic}}
\newcommand{\Sym}{\operatorname{Sym}}
\newcommand{\Bl}{\operatorname{Bl}}
\newcommand{\Aut}{\operatorname{Aut}}
\newcommand{\Sing}{\operatorname{Sing}}
\newcommand{\codim}{\operatorname{codim}}
\newcommand{\diag}{\operatorname{diag}}
\newcommand{\Spec}{\operatorname{Spec}}
\newcommand{\Proj}{\operatorname{Proj}}
\newcommand{\Stab}{\operatorname{Stab}}
\renewcommand{\O}{\operatorname{O}}
\newcommand{\curv}{{\operatorname{curve}}}
\newcommand{\ZZ}{\mathbb{Z}}
\newcommand{\QQ}{\mathbb{Q}}
\newcommand{\RR}{\mathbb{R}}
\newcommand{\CC}{\mathbb{C}}
\newcommand{\TT}{\mathbb{T}}
\newcommand{\PP}{\mathbb{P}}
\newcommand{\FF}{\mathbb{F}}
\newcommand{\LL}{\mathbb{L}}
\newcommand{\SSS}{\mathbb{S}}
\newcommand{\BG}[1][\Gamma]{{\calB_4/#1}}
\newcommand{\BBG}[1][\Gamma]{{(\calB_4/#1)^*}}
\newcommand{\BBGm}[1][\Gamma_m]{{(\calB_4/#1)^*}}
\newcommand{\oBG}[1][\Gamma]{{\overline{\calB_4/#1}}}
\newcommand{\oBGm}[1][\Gamma_m]{{\overline{\calB_4/#1}}}
\newcommand{\GIT}[1][\calM]{{#1}^{\operatorname{GIT}}}
\newcommand{\MK}[1][\calM]{{#1}^{\operatorname{K}}}
\newcommand{\hzero}{{\widehat 0}}
\newcommand{\hone}{{\widehat 1}}
\newcommand{\htwo}{{\widehat 2}}
\title[Compactifications of moduli of cubic surfaces]{Non-isomorphic smooth compactifications of the moduli space of cubic surfaces}
\author[S. Casalaina-Martin]{Sebastian Casalaina-Martin}
\address{University of Colorado, Department of Mathematics, Boulder, CO 80309}
\email{casa@math.colorado.edu}
\author[S. Grushevsky]{Samuel Grushevsky}
\address{Department of Mathematics and Simons Center for Geometry and Physics, Stony Brook University, Stony Brook, NY 11794-3651}
\email{sam@math.stonybrook.edu}
\author[K. Hulek]{Klaus Hulek}
\address{Institut f\"ur Algebraische Geometrie, Leibniz Universit\"at Hannover, 30060 Hannover, Germany}
\email{hulek@math.uni-hannover.de}
\author[R. Laza]{Radu Laza}
\address{Stony Brook University, Department of Mathematics, Stony Brook, NY 11794-3651}
\email{rlaza@math.stonybrook.edu}
\date{\today}
\thanks{Research of the first author is supported in part by grant from the Simons Foundation \# 581058.
Research of the second author is supported in part by NSF grants DMS-18-02116 and DMS-21-01631.
Research of the third author is supported in part by DFG grant Hu-337/7-2.
Research of the fourth author is supported in part by NSF grant DMS-21-01640.
}
\begin{document}
\begin{abstract}
The well-studied moduli space of complex cubic surfaces has three different, but isomorphic, compact realizations: as a GIT quotient $\GIT$, as a Baily--Borel compactification of a ball quotient $\BBG$, and as a compactified $K$-moduli space. From all three perspectives, there is a unique boundary point corresponding to non-stable surfaces. From the GIT point of view, to deal with this point, it is natural to consider the Kirwan blowup $\MK\to \GIT$, while from the ball quotient point of view it is natural to consider the toroidal compactification $\oBG\to \BBG$. The spaces $\MK$ and $\oBG$ have the same cohomology and
and it is therefore natural to ask whether they are isomorphic.
Here we show that this is in fact {\em not} the case.
Indeed, we show the more refined statement that $\MK$ and $\oBG$ are equivalent in the Grothendieck ring, but not $K$-equivalent. Along the way, we establish a number of results and techniques for dealing with singularities and canonical classes of Kirwan blowups and toroidal compactifications of ball quotients.
\end{abstract}

\maketitle
\section{Introduction}\label{sec:intro}
The four-dimensional moduli space $\calM$ of smooth complex cubic surfaces is one of the gems of classical algebraic geometry. As a moduli space of hypersurfaces, it comes with a GIT model $\GIT$, well-studied via the classical invariant theory (see \cite[Ch.9]{DoCAG}, \cite{DvG}).   Through an auxiliary  construction with cubic threefolds, there is a Hodge-theoretic ball quotient model $\BG$ of the moduli space $\calM$,  and its Baily--Borel compactification~$\BBG$ \cite{ACTsurf} (see also \cite{DvGK} for an alternative construction of the ball quotient model, involving $K3$ surfaces instead, and  \cite{KudlaRapoport, ZhiweiZheng} for a construction via abelian varieties of Picard type, which also gives information on the field of definition of the period map).
Finally, as cubic surfaces are Fano, there is a $K$-stable compactification \cite{OSS}.  These three models of $\calM$, each of a totally different nature, turn out to be isomorphic; in particular $\GIT\cong \BBG$ by \cite[Thm. 3.17]{ACTsurf}.

The moduli space $\calM$ is open in $\GIT$, and the complement $\GIT-\calM$, which we refer to as the boundary or, equivalently, the discriminant, is an irreducible divisor. All of the points in the boundary, with the exception of one,
parameterize cubic surfaces with $A_1$ singularities, which are GIT stable. The remaining point is the unique GIT boundary point $\Delta_{3A_2}\in \GIT$ which, from the GIT  (and also $K$-moduli) point of view, corresponds to the unique
strictly polystable orbit of cubic surfaces with $3A_2$ singularities. From the ball quotient perspective, under the identification $\GIT\cong \BBG$, $\Delta_{3A_2}$ is the unique cusp $c_{3A_2}$ of $\BBG$.

For the general GIT setup, in the case where one is taking a GIT quotient of a smooth projective variety, Kirwan has introduced a procedure, usually called the Kirwan blowup of the GIT moduli space, which we denote $\MK\to \GIT$ in our case. The crucial point is that in general a Kirwan blowup is always a desingularization in the sense that it only has finite quotient singularities; alternatively, in the language of stacks, the natural quotient stack structure for the Kirwan blowup gives a smooth Deligne--Mumford stack. Similarly, to improve the singularities of the Baily--Borel compactification, one considers toroidal compactifications. Additionally, in some cases, one can attach modular meaning to some toroidal compactifications (e.g., \cite{Alexeev-A}). In general, a toroidal compactification depends on some choice of a fan, but for the case of ball quotients, the choice is unique, and we denote the toroidal compactification in our particular case by $\oBG\to \BBG$. In our particular situation,  it is a coincidence that $\GIT\cong \BBG$ has only finite quotient singularities. However, it still makes sense to consider the Kirwan blow-up $\MK$ and the toroidal compactification $\oBG$ respectively, in the sense that they have a better chance of having modular interpretations (see e.g., \cite{ZhangCubic} for some related work) that would give rise to natural Deligne--Mumford moduli stacks.

In our case, both $\MK$ and $\oBG$ are blowups of the same point $\Delta_{3A_2}(=c_{3A_2})$ on the same space $\GIT\cong \BBG$.  In \cite{cohcubics}, we studied the cohomology of various compactifications for moduli of cubic threefolds and cubic surfaces. In particular, we showed that $\MK$ and $\oBG$ have the same cohomology, and we asked if $\MK$ and $\oBG$ were in fact isomorphic.
This seems very plausible especially in the context of the recent work of Gallardo--Kerr--Schaffler \cite{GaKeSch} on the moduli of marked cubic surfaces. Namely, recall that the moduli space of cubic surfaces has a natural $W(E_6)$-cover $\calM_m$ obtained by labeling the $27$ lines on the cubic.  Naruki \cite{naruki} proved that this marked moduli space $\calM_m$ admits a smooth normal crossing compactification $\overline{\calN}$ (see also \cite{HKT09} for further modular interpretations attached to $\overline{\calN}$). On the other hand, Allcock--Carlson--Toledo \cite{ACTsurf} noticed that the ball quotient model $\BBG$ comes with a natural marked cover $\BBGm$ (with $\Gamma_m\unlhd \Gamma$ and $\Gamma/\Gamma_m\cong W(E_6) \times \{ \pm 1 \}$)
and that the associated toroidal compactification $\oBGm$ behaves similarly to $\overline{\calN}$. This was further clarified by \cite{GaKeSch} (see also Remark~\ref{rem-gks} for a quick alternative proof), who showed that in fact the Naruki and the marked toroidal compactifications agree: $\overline{\calN}\cong \oBGm$.
Returning to the unmarked case, the main result of our paper is that, surprisingly,
$\MK$ and $\oBG$ are {\em not} isomorphic. Furthermore, we investigate and explain the geometry underlying this phenomenon.

\begin{teo}\label{T:mainNonIso}
Neither the birational map $f:\MK\dashrightarrow\oBG$ from the Kirwan blowup of the GIT moduli space of cubic surfaces to the toroidal compactification of the ball quotient model, which is the identity on the moduli space $\calM$ of smooth cubic surfaces, nor its inverse $f^{-1}$, extend to a morphism of the compactifications.
\end{teo}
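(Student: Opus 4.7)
The plan is two-stage: first reduce to a non-isomorphism statement, then distinguish the two compactifications by a numerical invariant. For the reduction, suppose $f$ extends to a proper birational morphism $g\colon\MK\to\oBG$; the case of $f^{-1}$ is symmetric. Since $g$ is birational and the identity over $\calM$, we have $g^{-1}(\oBG\setminus E_T)\subset\calM$, so $g^{-1}(E_T)\subset E_K$; by properness and surjectivity of $g$ this forces $g(E_K)=E_T$. As $E_K$ and $E_T$ are irreducible of dimension $3$, the restriction $g|_{E_K}$ is generically finite, and in particular $g$ contracts no divisor. Since $\oBG$ has only finite quotient singularities and is therefore $\QQ$-factorial, the standard consequence of Zariski's Main Theorem (the exceptional locus of a proper birational morphism into a normal $\QQ$-factorial variety is either empty or of pure codimension one) forces $g$ to be an isomorphism. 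It is therefore enough to show that no isomorphism $\MK\cong\oBG$ over the common base $\GIT\cong\BBG$ exists.

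Fix the base isomorphism and write
\begin{equation*}
K_{\MK}=\pi_K^{*}K_{\GIT}+a_K E_K,\qquad K_{\oBG}=\pi_T^{*}K_{\BBG}+a_T E_T,\qquad a_K,a_T\in\QQ.
\end{equation*}
Any isomorphism $\MK\cong\oBG$ over the base identifies $E_K$ with $E_T$ and pulls the second relation back to the first, so would force $a_K=a_T$; hence the main content is to compute the two discrepancies and show that they differ. For $a_T$ I would apply Mumford's extension of Hirzebruch's proportionality for toroidal compactifications of ball quotients: for a $4$-dimensional ball quotient this gives $K=5\lambda-E_T$ (together with $K_{\BBG}=5\lambda|_{\BBG}$, up to branch corrections not supported on $E_T$), yielding $a_T=-1$. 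For $a_K$ I would work in the local Luna model at the unique $3A_2$ polystable orbit: the reductive stabilizer is an extension of $S_3$ by a one-dimensional torus acting with explicit weights on the four-dimensional normal slice, and in this linearized model the Kirwan blowup is a weighted blowup of the quotient, whose discrepancy is given by the standard formula in terms of the weights and the residual finite isotropy along $E_K$. Showing that the resulting $a_K$ differs from $-1$ would close the argument.

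The main obstacle is the Kirwan-side computation of $a_K$. Kirwan's partial desingularization is in general a sequence of blowups along strata of decreasing-dimensional stabilizer, so one must identify each intermediate stratum lying over $\Delta_{3A_2}$, aggregate the contributions to the final exceptional divisor $E_K$ on $\MK$, and carefully track the residual finite stabilizers (which enter the discrepancy via the quotient-singularity adjustment). A secondary but nontrivial point is verifying that the isomorphism $\BBG\cong\GIT$ identifies the Hodge bundle on the ball quotient side with the natural linearization coming from the GIT description of cubic surfaces, so that the two pullback identities for the canonical divisor can be meaningfully compared across the boundary.
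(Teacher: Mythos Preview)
Your reduction step is correct and matches the paper's argument: once one knows $f$ cannot extend to an isomorphism, $\QQ$-factoriality of both targets rules out any non-isomorphic extension (it would be a small contraction into a $\QQ$-factorial variety). Your main strategy---comparing the discrepancies $a_K$ and $a_T$ over the common base---is also valid, and is essentially how the paper proves the stronger non-$K$-equivalence theorem. The paper's \emph{direct} proof of the present theorem is different: it compares the local geometry of the strict transform of the discriminant divisor with the exceptional divisor on each side, showing they meet non-transversally in $\MK$ but generically transversally in $\oBG$. Your discrepancy approach is a legitimate alternative route.

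However, your computation of $a_T$ has a genuine error. You write that Mumford proportionality gives $K_{\oBG}=5\lambda-E_T$ ``up to branch corrections not supported on $E_T$'', and conclude $a_T=-1$. But both branch divisors---the discriminant $D_n$ and the Eckardt divisor $R$---pass through the unique cusp $c_{3A_2}$, so their pullbacks under $p$ do pick up multiples of $T_{3A_2}$: the paper shows $p^*D_n=\widetilde D_n+6T_{3A_2}$ and $p^*R=\widetilde R+24T_{3A_2}$. Since $K_{\BBG}=5\lambda-\tfrac{5}{6}D_n-\tfrac{1}{2}R$, these contributions shift the discrepancy to $a_T=-1+\tfrac{5}{6}\cdot 6+\tfrac{1}{2}\cdot 24=16$, not $-1$.

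There are also factual errors in your Luna slice description that would obstruct the Kirwan-side computation: the connected stabilizer of the $3A_2$ cubic in $\SL(4,\CC)$ is a \emph{two}-dimensional torus, and the normal slice is \emph{six}-dimensional. The paper's computation of $a_K$ uses the general Riemann--Hurwitz formula for Kirwan blowups and requires as input the codimension $c=6$ of the $3A_2$ orbit and the multiplicity $\mu=15$ of the Eckardt divisor along that orbit, yielding $a_K=c+\mu-1=20$. Since $20\neq 16$, your strategy does succeed once the numbers are corrected.
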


We provide a quick proof of this theorem by showing that the strict transform in $\oBG$ of the discriminant divisor (the closure of the locus of cubics with an $A_1$ singularity) in $\BBG=\GIT$ meets the exceptional divisor in $\oBG$ generically transversally, whereas this is not the case in $\MK$. This shows {\em a priori} that the period map~$f$ does not extend to an {\em isomorphism}. Using straightforward arguments from higher-dimensional birational geometry, using that both $\MK$ and $\oBG$ are $\QQ$-factorial, one can deduce from this that neither the period map~$f$, nor its inverse $f^{-1}$, extends to a morphism.

While \Cref{T:mainNonIso} shows that the Kirwan and toroidal compactifications are not {\em naturally} isomorphic,
one could still wonder if the agreement of the Betti numbers, observed in \cite{cohcubics}, could be explained by an abstract isomorphism
as complex projective varieties. To this end we strengthen \Cref{T:mainNonIso} by showing that $\MK$ and $\oBG$ are not abstractly isomorphic, and in fact not even $K$-equivalent.

Recall that two birational normal projective $\QQ$-Gorenstein varieties $X$ and $Y$ are said to be  $K$-equivalent if there exists a smooth variety
$Z$ dominating birationally both of them
\begin{equation}\label{E:Kequiv}
\xymatrix{
&Z\ar[ld]_g \ar[rd]^h&\\
X\ar@{<-->}[rr]&&Y
}
\end{equation}
such that $g^*K_X \sim_{\QQ} h^*K_Y$. Two facts about $K$-equivalent varieties that motivate our interest in this question are the following.
First, it is known that smooth $K$-equivalent varieties have the same Betti numbers \cite[Thm.~1.1]{batyrev96} (see for instance \cite[Ch.~4, Thm.~3.1]{popaNote} for the case where $X$ and $Y$ are not assumed to be Calabi--Yau). Second, birational normal projective $\QQ$-Gorenstein varieties with canonical singularities, such that their canonical bundles are nef, are known to be  $K$-equivalent \cite[Lem.~4.4]{kawamata_02}.
We are, however, in neither of these situations: first, the Kirwan and toroidal compactifications are singular (with finite quotient singularities), and second, the canonical bundles of the compactifications are far from being nef, as the spaces are birational to the $\QQ$-Fano variety $\GIT$. As it turns out, these two compactifications are {\em not} $K$-equivalent.
\begin{teo}\label{T:mainNonK}
The Kirwan compactification $\MK$ and the toroidal compactification $\oBG$ of the moduli space of smooth cubic surfaces $\calM$ are {\em not} $K$-equivalent.
\end{teo}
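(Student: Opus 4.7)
The plan is to construct a common smooth resolution $Z$ of $\MK$ and $\oBG$ via morphisms $g\colon Z\to \MK$ and $h\colon Z\to \oBG$ (for instance by resolving the graph of $f$), and then to show that $g^{*}K_{\MK}\not\sim_{\QQ} h^{*}K_{\oBG}$; this directly contradicts the $K$-equivalence condition in diagram \eqref{E:Kequiv}. Since $f$ is the identity on $\calM$ and is an isomorphism away from the preimages of $\Delta_{3A_2}=c_{3A_2}$, all the $g$- and $h$-exceptional divisors of $Z$ lie over this single point, and the entire computation reduces to a local analysis near $\Delta_{3A_2}$.

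The first main step is to compute the canonical classes of the two compactifications in the form
\[
K_{\MK}=\pi_K^{*}K_{\GIT}+\alpha_K E_K, \qquad K_{\oBG}=\pi_T^{*}K_{\BBG}+\alpha_T E_T,
\]
where $\pi_K\colon \MK\to \GIT$ and $\pi_T\colon \oBG\to \BBG$ are the natural blowup maps with exceptional divisors $E_K$ and $E_T$. The coefficient $\alpha_K$ is computed from a Luna-type local model of the Kirwan blowup at the $3A_2$ polystable orbit: one determines the stabilizer of this orbit inside $\SL_4$, reads off the weights of its action on the normal slice, and applies the standard discrepancy formula for the weighted blowup of a point in the resulting affine quotient. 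The coefficient $\alpha_T$ is determined by the Mumford--Tai discrepancy formula for the toroidal compactification of a ball quotient along a cusp, which depends on the unipotent stabilizer and the lattice data at $c_{3A_2}$.

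The second step is to pull back to $Z$. Since $\pi_K\circ g=\pi_T\circ h$ under the isomorphism $\GIT\cong \BBG$, we obtain
\[
g^{*}K_{\MK}-h^{*}K_{\oBG}=\alpha_K\, g^{*}E_K-\alpha_T\, h^{*}E_T .
\]
Writing $g^{*}E_K=\widetilde{E}_K+\sum m_i G_i$ and $h^{*}E_T=\widetilde{E}_T+\sum n_i G_i$, where $\widetilde{E}_K$ and $\widetilde{E}_T$ are the strict transforms and the $G_i$ are the further exceptional divisors of $Z$ lying over $\Delta_{3A_2}$, the vanishing of the difference forces delicate linear relations among these coefficients. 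Here \Cref{T:mainNonIso} is the essential input: the non-transversal behavior of the strict transform of the discriminant at $E_K$, contrasted with its transversal behavior at $E_T$, obstructs $\widetilde{E}_K$ and $\widetilde{E}_T$ from agreeing as divisors on $Z$, so that $g^{*}E_K$ and $h^{*}E_T$ are genuinely distinct $\QQ$-divisors, not mere positive multiples of one another.

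The hardest step will be the explicit local bookkeeping near $\Delta_{3A_2}$: one needs a sufficiently precise description of both the Kirwan local model (a GIT quotient of an affine slice by the reductive stabilizer at the $3A_2$ orbit) and the toroidal local model (an abelian variety bundle encoding the cusp structure at $c_{3A_2}$) to concretely track $Z$, its exceptional divisors, and the various strict transforms. Once this local data is in hand, one verifies that the combination $\alpha_K\, g^{*}E_K-\alpha_T\, h^{*}E_T$ does not vanish on $Z$---for instance by reading off its coefficient along one of the $G_i$, or along $\widetilde{E}_K$ (equivalently $\widetilde{E}_T$) ---which yields $g^{*}K_{\MK}\not\sim_{\QQ} h^{*}K_{\oBG}$ and therefore the theorem.
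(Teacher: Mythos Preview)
Your proposal is a plan rather than a proof: the decisive verification—that $\alpha_K\, g^{*}E_K - \alpha_T\, h^{*}E_T$ is a nonzero divisor on $Z$—is explicitly deferred to a ``hardest step'' of local bookkeeping that you never carry out. Without that step, nothing is established. Moreover, the role you assign to \Cref{T:mainNonIso} is not sharp: knowing that $\widetilde{E}_K$ and $\widetilde{E}_T$ are distinct prime divisors on $Z$ does not by itself force the linear combination to be nonzero, since each strict transform may well appear with positive multiplicity in the pullback of the other exceptional divisor. You would still need to compute those multiplicities, which amounts to resolving the indeterminacy of $f$ explicitly—exactly the work you have postponed.

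The paper's proof takes a different and much cleaner route that avoids any common resolution. The key observation is that the top self-intersection number $K_X^{\dim X}$ is a $K$-equivalence invariant: if $g^{*}K_X \sim_\QQ h^{*}K_Y$ on some $Z$, then $K_X^4 = K_Y^4$ by the projection formula. So it suffices to show $K_{\MK}^4 \neq K_{\oBG}^4$. Since both exceptional divisors are contracted to the single point $\Delta_{3A_2}$, all cross terms vanish, giving $K_{\MK}^4 = K_{\GIT}^4 + 20^4\, D_{3A_2}^4$ and $K_{\oBG}^4 = K_{\GIT}^4 + 16^4\, T_{3A_2}^4$. The paper computes $T_{3A_2}^4 = -1/6^3$ exactly, but for $D_{3A_2}^4$ it does \emph{not} compute the value; instead it uses an arithmetic argument. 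By \Cref{P:stab--ex}(2), no stabilizer along $D_{3A_2}$ has order divisible by $5$, so by \Cref{P:AGV} the rational number $D_{3A_2}^4$ has denominator coprime to $5$. Since $20^4 = 5^4 \cdot 2^8$, the quantity $20^4 D_{3A_2}^4$ carries a positive power of $5$, whereas $16^4 \cdot (-1/6^3) = -2^{13}/3^3$ does not; hence the two self-intersection numbers differ. Note in particular that this argument is entirely independent of \Cref{T:mainNonIso} (cf.\ \Cref{R:log-depend}), whereas your plan treats that theorem as essential input.
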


We prove \Cref{T:mainNonK} by showing that the top self-intersection numbers of the canonical bundles of the two compactifications are different.
Clearly, \Cref{T:mainNonK} implies \Cref{T:mainNonIso} (since it implies that  $f$ does not extend to an isomorphism).
However, we prefer first to give an independent proof of \Cref{T:mainNonIso} in  \Cref{sec:ballquoitient}, as this also helps elucidate the geometry of the compactifications. This approach can also be used in other contexts such as cubic threefolds or other Deligne--Mostow spaces.
A further major ingredient of our proof is an explicit geometric recipe for computing the canonical class of a Kirwan blowup of a GIT quotient. This works in full generality, so long as the strictly semi-stable locus has codimension at least $2$, which we explain in~\Cref{sec:KKirwan}, and which may be of independent interest; see also  \Cref{R:S-BGLM} regarding the singularities of these spaces.

In the absence of odd cohomology (as is the case here), another possible explanation for the equality of Betti numbers of $\MK$ and $\BBG$ could be their $\LL$-equivalence in the Grothendieck ring of varieties. We prove that in fact the classes of these compactifications are equal in the Grothendieck ring of varieties.
\begin{teo}\label{T:mainL}
The Kirwan compactification $\MK$ and the toroidal compactification $\oBG$ of the moduli space of smooth cubic surfaces $\calM$ are equivalent in the Grothendieck ring of varieties.
\end{teo}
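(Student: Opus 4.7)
The plan is to use scissor relations in $K_0(\mathrm{Var}_{\CC})$ to reduce the statement to an equality of classes of the two exceptional divisors, and then to match those divisors in the Grothendieck ring by a piecewise decomposition. Since both $\MK\to\GIT$ and $\oBG\to\BBG$ restrict to isomorphisms over the common open locus $\GIT\setminus\{\Delta_{3A_2}\}\cong\BBG\setminus\{c_{3A_2}\}$, additivity of the Grothendieck class gives
\[
[\MK]-[E_K] \;=\; [\GIT]-1 \;=\; [\BBG]-1 \;=\; [\oBG]-[E_{\mathrm{tor}}],
\]
where $E_K\subset\MK$ and $E_{\mathrm{tor}}\subset\oBG$ denote the Kirwan and toroidal exceptional divisors respectively, and the middle equality uses the ACT isomorphism $\GIT\cong\BBG$. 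The theorem then reduces to proving the single identity
\[ [E_K] \;=\; [E_{\mathrm{tor}}] \quad \text{in}\quad K_0(\mathrm{Var}_{\CC}). \]

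Next I would describe both divisors concretely. By the standard theory of toroidal compactifications of ball quotients, the toroidal exceptional divisor at the unique $0$-dimensional cusp $c_{3A_2}$ is a finite quotient $E_{\mathrm{tor}}\cong A/F$, where $A$ is the abelian threefold determined by the Heisenberg lattice at the cusp (for the ACT Eisenstein lattice, $A$ is isogenous to $E_\omega^3$ with $E_\omega=\CC/\ZZ[\omega]$), and $F$ is the finite reductive quotient of the cusp parabolic. On the Kirwan side, by Luna slice analysis at the unique strictly polystable $3A_2$ cubic, $E_K$ is the partially-desingularized projective GIT quotient $\PP(N)/\!\!/R$, where $N$ is the Luna slice to the orbit and $R$ is the stabilizer, whose identity component is a rank-$2$ torus $T_R$ acting on $N$ with explicit weights, and whose component group is the finite group permuting the three $A_2$ singularities. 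I would then prove $[E_K]=[E_{\mathrm{tor}}]$ by constructing compatible stratifications on both sides, using on each the action of a natural torus (either $T_R$ or the Mumford torus of the semi-abelic degeneration at the cusp) to produce locally closed strata indexed by the same combinatorial data coming from the $3A_2$ root system, and matching corresponding strata term by term.

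The main obstacle is reconciling the two \emph{a priori} very different geometric origins of these divisors: $E_K$ is built from a projective Luna slice, so its class is assembled from ``polynomial in $\LL$'' ingredients, whereas $[E_{\mathrm{tor}}]$ involves the non-polynomial class $[A]$. The desired equality therefore cannot come from a dimensional formula in $\LL$ alone; one has to identify where the Eisenstein abelian threefold hides inside $\PP(N)/\!\!/R$. A natural route is to exhibit a common $2$-dimensional base---a moduli space of three-$A_2$ configurations modulo the $3A_2$ symmetry group---over which both $E_K$ and $E_{\mathrm{tor}}$ are finite quotients of abelian threefold fibrations with matching generic fibers, and then to clean up the non-generic loci using additivity. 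Should a direct comparison prove intractable, an alternative is to lift everything to the marked cover, where $\oBGm\cong\overline{\calN}$ is Naruki's explicit smooth compactification and the marked Kirwan blowup admits a correspondingly explicit description, to prove the analogous equality of classes there, and to descend via the $W(E_6)\times\{\pm 1\}$-action.
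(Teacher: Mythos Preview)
Your first reduction is correct and matches the paper: additivity in $K_0(\mathrm{Var}_\CC)$ reduces the claim to $[D_{3A_2}]=[T_{3A_2}]$.

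The gap is in what comes next. You describe $E_{\mathrm{tor}}$ as a finite quotient $A/F$ of an Eisenstein abelian threefold and then frame the core difficulty as reconciling the ``non-polynomial class $[A]$'' with the polynomial-in-$\LL$ nature of the Kirwan side. This is a false obstacle. In the present case the finite quotient completely rationalizes the abelian variety: the marked toroidal boundary components are each $(\PP^1)^{\times 3}$ (Naruki), the stabilizer $S_{3A_2,m}$ of a component is an extension of $S_3$ by $(S_3)^{\times 3}$ acting in the evident way, and the quotient is $\Sym^3\PP^1\cong\PP^3$. Thus $[T_{3A_2}]=1+\LL+\LL^2+\LL^3$, and there is no hidden abelian class to locate inside the Luna slice quotient. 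Your proposed strategies---exhibiting a common base with abelian-threefold fibers, or descending a marked comparison---are aimed at structure that simply isn't there.

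What the paper actually does, once $T_{3A_2}\cong\PP^3$ is known, is compute $[D_{3A_2}]$ directly. The exceptional divisor is $(\PP^5)^{ss}/\Stab(S_{3A_2})$ with the explicit torus weights from the Luna slice; one lists the finitely many types of stable $\TT^2$-orbits according to which homogeneous coordinates vanish, works out for each type the residual parameter space modulo the relevant finite stabilizers and the $S_3$ symmetry, and sums the contributions. The only nontrivial pieces are quotients like $(\CC^*)^3/(G\rtimes S_3)$ and $(\CC^*)^2/S_2$, which are handled by elementary invariant-theory identifications (symmetric functions, squaring coordinates). The total comes out to $[\CC^*]^3+4[\CC^*]^2+6[\CC^*]+4$, which is $[\PP^3]$. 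An alternative, also in the paper, is to recognize $\PP^5/\!\!/\TT^2$ as a simplicial toric threefold (combinatorially a cube) with an explicit $S_3$ action and read off the class from the face poset.

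So: keep your first paragraph, replace the second and third by (i) the identification $T_{3A_2}\cong\PP^3$ via the marked cover, and (ii) a direct orbit-type stratification of $D_{3A_2}=(\PP^5)^{ss}/(\TT^2\rtimes S_3)$ showing its class is $[\PP^3]$.
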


The structure of the paper is as follows. We start in~\Cref{sec:example} by discussing a simple explicit example, for motivation. This example is of two birational surfaces, which are equivalent in the Grothendieck ring, but not $K$-equivalent. These are obtained as different blowups of~$\PP^2$ supported at one point. While this is not a GIT problem, the example demonstrates some of the features similar to our case of the moduli of cubic surfaces, and serves as motivation. In~\Cref{sec:calM} we recall the constructions and the geometry of the compactifications of the moduli space of cubic surfaces, and give the formulas for the discriminant and boundary divisors in local coordinates, as well as the crucial computation of the finite stabilizers of points on them. Some of the proofs are by detailed explicit computations, which are given in \Cref{sec:app}. In particular, in~\Cref{sec:calM} we obtain the resulting non-transversality results for the Kirwan blowup needed for proving~\Cref{T:mainNonIso}.
In  \Cref{sec:ballquoitient} we discuss the geometry of the ball quotient model in detail,  obtain the resulting transversality results for the toroidal compactification,   and thereby give a direct proof of \Cref{T:mainNonIso}.  In \Cref{sec:canonicalbundle} we provide details on the canonical bundles of the  ball quotient models.

Going further, in~\Cref{sec:KKirwan} we develop the general machinery for relating the canonical bundle of a GIT compactification and its Kirwan desingularization. We then specialize to our case, and compute the canonical bundle of $\MK$.
Finally, we prove our main results in~\Cref{sec:proofnonKequiv}, where we establish the non-$K$-equivalence of the Kirwan blowup $\MK$ and the toroidal compactification $\oBG$, and in~\Cref{sec:proofLequiv}, where we show their equivalence in the Grothendieck ring. The first requires a discussion of the top self-intersection numbers of canonical bundles on these varieties, the second relies on a concrete description of the boundary of the Kirwan blowup as a quotient of a toric variety by a finite group. In \Cref{sec:app} we collect a number of results whose proofs are based on explicit computations in the Luna slice.

\begin{rem}

One of our motivations for this paper is our systematic investigation of the moduli space of cubic threefolds and its compactifications (e.g., \cite{cohcubics} and references within). In that case as well, there is a toroidal compactification of a ball quotient model and a Kirwan blowup of the natural GIT quotient. Furthermore, as here, the two models have the same cohomology. Methods similar to those used in the proof of \Cref{T:mainNonIso} show that for moduli of cubic threefolds the natural period map does not extend to an isomorphism, either. However, the more refined results (\Cref{T:mainNonK} and \Cref{T:mainL}) seem at the moment out of reach for cubic threefolds.

We also note that there are other related moduli spaces where these techniques apply, such as the moduli space of cubic surfaces with a line, see \Cref{rem:cubics with a line}. Again, we will not pursue this here.
\end{rem}

\subsection*{Acknowledgements}
We thank Dan Abramovich for asking about the $K$-equivalence of the moduli spaces.
We are grateful to Damiano Fulghesu for a conversation on equivariant Chow rings. We thank John Christian Ottem for asking about the $\LL$-equivalence of the compactifications $\MK$ and $\oBG$.  We are grateful to David Rydh for discussions and explanations regarding the Kirwan desingularization as a blowup of an Artin stack, which helped to clarify our thinking and to
Daniel Allcock for answering questions on the ball quotient picture. Our thanks also go to Bert van Geemen who helped us clear up a question concerning the ring of invariants. We thank  Mathieu Dutour Sikiric for a computation with polytopes. This
work was partially supported by the Swedish Research Council under grant no.~2016-06596 while S.~Grushevsky, K.~Hulek, and R.~Laza visited the Institut Mittag-Leffler in Djursholm, Sweden during the fall of 2021. S.~Grushevsky thanks the Weizmann Institute of Science for its hospitality in spring 2022 when this paper was finished.

\section{A motivating example}\label{sec:example}
Before discussing the case of the moduli of cubic surfaces, we present an elementary example that captures some of the essential aspects of our arguments. While we are not aware of a global moduli interpretation of the example discussed here, the motivation and the construction of the example comes from the local study of semi-stable reduction for curves with an $A_2$ singularity  (see esp.~\cite{cml2}), and it is at least morally related to the moduli of cubic surfaces discussed here (e.g., see \cite[\S 1]{cubics}).

Namely, consider the pair $(M,D)$ consisting of $M\cong \PP^2$ together with a cuspidal cubic $D$. Let $o\in D$ be the cusp point. Define $M'$ to be the (standard) blowup of $M\cong \PP^2$ at $o$, with the exceptional divisor $E'\subseteq M'$, and let~$\widehat M$ be the standard log resolution of the cusp, obtained by $2$ further blowups of $M'$, with exceptional divisors $E_1,E_2,E_3$ over $M$. We label the exceptional divisors on $\widehat M$ in such a way that $(E_i)^2=i-4$; i.e., $E_1$ is the strict transform of $E'$. We contract $E_1$ and $E_2$ on $\widehat M$ and obtain $\overline M$, which will have two quotient singularities of types $\frac{1}{2}(1,1)$ (or equivalently $A_1$) and $\frac{1}{3}(1,1)$ (e.g., simply note that $E_1^2=-3$, $E_2^2=-2$, and that $E_1$ and $E_2$ do not intersect) and we denote by $\overline E\subseteq \overline M$ the exceptional divisor of the blowdown~$\overline M\to M$, so that $\overline E$ is the image of $E_3\subseteq \widehat M$. We obtain the following diagram:
\begin{equation*}
\xymatrix{
&\widehat M \ar_{\pi_1}[dl]\ar@{->}^{\bar\pi}[dr]&\\M' \ar[dr]_{\epsilon'}\ar@{-->}[rr]^{f}&&\overline M\ar[dl]^{\bar\epsilon}\\ &M
}
\end{equation*}
Clearly, $\overline M$ is a blowup of $M$ at $o$, and in fact it is a single weighted blowup of $M$ at $o$. In conclusion, both $M'$ and $\overline M$ are blowups of $M$ at $o$.
Both are $\QQ$-factorial with klt singularities, but they are non-isomorphic (e.g., $M'$ is smooth, while $\overline M$ is singular).
More in line with our arguments, we note that the birational map $f:M'\dashrightarrow \overline M$ does not extend to an isomorphism, since  the exceptional divisors $E'\subseteq M'$ and $\overline E\subseteq \overline M$, and the strict transforms of $D$ (denote them $D'\subseteq M'$ and $\overline D\subseteq \overline M$) are non-transversal in $M'$, but are transversal in $\overline M$. In fact, note that $(\overline M, \overline D+\overline E)$ is dlt, while $(M', D'+E')$ is not. This shows that even though $\overline M$ is singular, when accounting for the ``discriminant'' $D$, the ``correct'' resolution of $M$ is in fact $\overline M$ and not $M'$ (which is smooth!). Clearly, $M'$ and $\overline M$ have the same Betti numbers, and are  equal in the Grothendieck ring of varieties. On the other hand, an elementary computation shows that $M'$ and $\overline M$ are not $K$-equivalent. Namely, $K_{M'}^2=8$ (as $M'$ is a single regular blowup of $\PP^2$), while $K_{\overline M}^2=6+\frac{1}{3}$, since $K_{\widehat M}=\bar\pi^*K_{\overline M}-\frac{1}{3} E_1$ (as the discrepancy of a quotient singularity of type $\frac{1}{n}(1,1)$ is $\frac{2-n}{n}$).

We note that the weighted blowup $\overline M\to M$ is motivated by \cite{cml2}, which discusses the simultaneous semi-stable reduction for curves with certain singularities. Indeed, locally near the cusp $o$, the pair $(M,D)$ can be identified with the versal deformation for~$A_2$ singularities, with~$D$ being the discriminant. From this perspective, it is natural to consider (locally near $o$) the $W(A_2)$ cover $\widetilde M$ of~$M$ branched over~$D$. On this cover, the discriminant~$D$ pulls back to the~$A_2$  hyperplane arrangement. The standard blowup $\widehat{\widetilde M}$ at $\widetilde o\in \widetilde M$ (the preimage of $o$) leads to a normal crossing discriminant (see \cite{cml2} for the general construction and discussion). Clearly $W(A_2)$ acts on $\widehat{\widetilde M}$, and~$\overline M$ can be recovered as the quotient $\widehat{\widetilde M}/W(A_2)$ (again, the discussion is meant locally near $o$). The main point of this construction is that it takes into account the monodromy around the discriminant divisor, and thus it leads to a ``modular'' resolution $\overline M$ of the pair $(M,D)$ (see \cite{cml2} for precise statements).

Similarly, returning to our setup in the current paper, the toroidal compactification of the moduli space of cubic surfaces is a ``modular'' blowup of the Baily--Borel compactification, which takes into account the monodromy around the discriminant divisor.  In contrast, the Kirwan resolution of the GIT quotient does not see the monodromy, and consequently the Kirwan blowup behaves more like the standard blowup $M'\to M$. What we see is that while the GIT compactification and the Baily--Borel compactification for the moduli of cubic surfaces are isomorphic as projective varieties, the natural stack structure (even at certain stable points) is different. Namely, at the generic point of the discriminant divisor, corresponding to a cubic with an $A_1$ singularity, on the GIT side there are no extra  automorphisms (i.e., the stabilizer of the generic point of the discriminant is the diagonal $\mu_4$ in $\SL(4,\CC)$, which induces the trivial automorphism of the cubic surface), while on the Hodge theoretic side there is an extra automorphism given by the Picard--Lefschetz transformation corresponding to the nodal degenerations  (compare \cite[p.125]{olsson}).

\section{The GIT models for the moduli of cubic surfaces}\label{sec:calM} As is the case in general for hypersurfaces, the moduli of cubic surfaces has a natural compact model: {\em the GIT compactification $\GIT$}. For cubic surfaces, $\GIT$ is well understood via classical invariant theory; in particular,  $\GIT\cong \PP(1,2,3,4,5)$. Here, we are interested in {\em the Kirwan blowup $\MK\to \GIT$}, which is obtained by blowing up the unique GIT
strictly polystable
boundary point $\Delta\in \GIT$  according to a general procedure due to Kirwan \cite{kirwanblowup}. After reviewing $\GIT$ and $\MK$,  we discuss the local structure of the Kirwan blowup $\MK$ along the exceptional divisor $D_{3A_2}$. We conclude in \Cref{P:discMK} that the exceptional divisor $D_{3A_2}$ and the strict transform $\widetilde D_{A_1}$ of the discriminant divisor  do not meet, even generically, transversally in $\GIT$.

\subsection{Preliminaries on moduli of cubic surfaces}
\label{subsection:prelimmoduli}
We denote by $\calM \coloneqq \PP H^0(\PP^3,\calO_{\PP^3}(3))^\circ/\SL(4,\CC)$ the four-dimensional moduli space of smooth (complex) cubic surfaces, where $\PP H^0(\PP^3,\calO_{\PP^3}(3))^\circ$ denotes the locus of smooth cubic surfaces embedded in $\PP^3$. We denote by
$$
 \GIT\coloneqq \PP H^0(\PP^3,\calO_{\PP^3}(3))/\!\!/_{\calO(1)} \SL(4,\CC)
$$
the GIT compactification, and by
\begin{equation}\label{E:KBl}
 \pi:\MK\longrightarrow \GIT
\end{equation}
the Kirwan resolution of $\GIT$. The GIT stability for cubic surfaces has been completely described (see e.g.,~\cite[\S 7.2(b)]{mukai}). For a cubic surface $S\subseteq \PP^3$:
\begin{itemize}
\item $S$ is stable if and only if it has at worst $A_1$ singularities,
\item $S$ is semi-stable if and only if it is stable, or has at worst $A_2$ singularities, and does not contain the axes of the $A_2$ singularities,
\item $S$ is strictly polystable if and only if it is projectively equivalent to the so-called $3A_2$ cubic surface
 $$
 S_{3A_2}\coloneqq \lbrace x_0x_1x_2+x_3^3=0\rbrace\,,
 $$
 which has exactly $3$ singular points, each of which is an $A_2$ singularity.
\end{itemize}
It is a classical result (see~\cite[(2.4)]{DvGK}) that the GIT compactification
$$
 \GIT\cong \PP(1,2,3,4,5)
$$
is a weighted projective space. We will denote by
$$
 D_{A_1}\subseteq \GIT
$$
the so-called {\em discriminant divisor}, that is the closure of the locus of (stable) cubics with an $A_1$ singularity. This divisor is irreducible, as the locus of corresponding cubics in $\PP H^0(\PP^3,\calO_{\PP^3}(3))$ is irreducible; the general point is given by the orbit of the locus of cubics of the form $x_0q(x_1,x_2,x_3)+f(x_1,x_2,x_3)$ with $q(x_1,x_2,x_3)$ a smooth quadric, and $f(x_1,x_2,x_3)$ a cubic.

We denote by
$$
 R\subseteq \GIT
$$
the so-called {\em Eckardt divisor}, which generically parameterizes smooth cubic surfaces having an Eckardt point --- a point that lies on three lines contained in the cubic surface. This divisor is also irreducible, as the corresponding locus in $\PP H^0(\PP^3,\calO_{\PP^3}(3))$ is irreducible; the locus of smooth Eckardt cubics is the orbit of the locus of cubics of the form $x_0^2\ell(x_1,x_2,x_3)+f(x_1,x_2,x_3)$ with $\ell(x_1,x_2,x_3)$ a linear form and $f(x_1,x_2,x_3)$ a cubic, with the line and cubic meeting transversally. In the coordinates of the previous equation, the point $(1:0:0:0)$ is then an Eckardt point, and the three lines through the Eckardt point are the ones determined by the Eckardt point and the intersection $\{\ell =f=0\}$ in the hyperplane at infinity. Such an Eckardt cubic in these coordinates has an involution given by $x_0\mapsto -x_0$. Note that a general smooth cubic surface with an Eckardt point has a unique Eckardt point, and has automorphism group $\ZZ_2$, and moreover, the Eckardt divisor $R$ contains the locus of {\em all} smooth cubic surfaces that have any non-trivial automorphism (see e.g.,~\cite[Table 1 and Fig.~1]{DoDu}).

From the description of stability given above, it follows that $\GIT$ contains a unique strictly polystable point $\Delta_{3A_2}\in\GIT$ corresponding to the orbit of the $3A_2$ cubic $S_{3A_2}$, and consequently the Kirwan resolution~$\MK$ of~$\GIT$ is a blowup with center supported at~$\Delta_{3A_2}$ (see e.g., \cite{kirwanhyp} and~\cite{ZhangCubic}); we denote by
$$
 D_{3A_2}\subseteq \MK
$$
the exceptional divisor, which is irreducible (as is the case for exceptional divisors in Kirwan blowups, being the quotient of an open subset of the blowup of a smooth irreducible subvariety of a smooth irreducible variety). Note that $\MK$ and $D_{3A_2}$ are, by construction, smooth up to finite quotient singularities. We will denote by $\widetilde D_{A_1}\subseteq \MK$ the strict transform of the discriminant divisor $D_{A_1}\subseteq \GIT$, and by $\widetilde R\subseteq\MK$ the strict transform of the Eckardt divisor $R\subseteq \GIT$.

\smallskip

\subsection{Geometry of $\GIT$ as a weighted projective space}
In full generality, consider a weighted projective space $\PP(q_0,\dots, q_n)=\Proj\CC[x_0,\dots,x_n]=(\CC^{n+1}-\{0\})/\CC^*$, where the weight of $x_i$ (resp.~the weight of the action of $\CC^*$ on $x_i$) is $q_i$ for $i=0,\dots,n$, where, without loss of generality, we assume that $\gcd(q_0,\dots, q_n)=1$ \cite[Prop.~p.37]{dolgachevWP}. Denoting $G_{\vec q}\coloneqq \mu_{q_0} \times \dots \times\mu_{q_n}$, where $\mu_{\ell}$ is the multiplicative group of roots of unity of order $\ell$, and letting the group~$G_{\vec q}$ act diagonally on~$\PP^n$, we can express the weighted projective space as the quotient
\begin{equation}\label{E:wPn}
\PP(q_0,\dots, q_n) = \PP^n / G_{\vec q}\,,
\end{equation}
with quotient map
\begin{equation}\label{E:wPn-h}
h:\PP^n \longrightarrow \PP(q_0,\dots, q_n)\,.
\end{equation}
This is the map of spaces associated to the identification of the graded ring $\CC[x_0,\dots,x_n]$ as the subring $\CC[y_0^{q_1},\dots,y_n^{q_n}]\subseteq \CC[y_0,\dots,y_n]$ of the standard graded polynomial ring, which can be viewed as the invariant ring for the group $G_{\vec q}$ acting diagonally.

While $G_{\vec q}$ need not be cyclic, the weighted projective space is locally a cyclic quotient. It is covered by the open sets $U_i=D_+(x_i)\coloneqq \{\mathfrak p\in \PP(q_0,\dots,q_n): x_i\notin \mathfrak p\}$ (i.e.,~$U_i$ is the image of $\{x_i \neq 0\}\subseteq \CC^{n+1}-\{0\}$ under the quotient map~$h$), and one has
$$
U_i \cong \CC^n/\mu_{q_i}=\CC^n/\langle (\zeta_{q_i}^{q_0}, \dots ,\zeta_{q_i}^{q_{i-1}}, \zeta_{q_i}^{q_{i+1}}, \dots ,\zeta_{q_i}^{q_{n}}) \rangle\,,
$$
where $\zeta_{q_i}$ is a primitive root of unity of order $q_i$. This identification comes from the identification $U_i=\Spec(\CC[x_0,\dots,x_n]_{(x_i)})$, where $\CC[x_0,\dots,x_n]_{(x_i)}$ is the subring of elements of degree $0$ in the localized ring $\CC[x_0,\dots,x_n]_{x_i}$, and the identification of the ring $\CC[x_0,\dots,x_n]_{(x_i)}$ with the subring of $\CC[z_0,\dots,z_{i-1},z_{i+1},\dots,z_n]^{\mu_{q_i}} \subseteq \CC[z_0,\dots,z_{i-1},z_{i+1},\dots,z_n]$ of invariants for the diagonal action of the cyclic group $\mu_{q_i}=\langle (\zeta_{q_i}^{q_0}, \dots ,\zeta_{q_i}^{q_{i-1}}, \zeta_{q_i}^{q_{i+1}}, \dots ,\zeta_{q_i}^{q_{n}}) \rangle$.

It is now straightforward to apply the Reid--Shepherd-Barron--Tai criterion to $\GIT$, by local computations in these charts.
\begin{lem}
The space $\GIT \cong \PP(1,2,3,4,5)$ has canonical singularities.
\end{lem}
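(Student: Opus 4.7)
The plan is to work chart-by-chart using the identifications $U_i \cong \CC^4/\mu_{q_i}$ recalled in the previous paragraph, and to apply the Reid--Shepherd-Barron--Tai (RST) criterion in each chart. Recall that for a small (i.e.\ pseudo-reflection free) diagonal action of $\mu_r$ on $\CC^n$ with weights $(a_1,\dots,a_n)$, the quotient has canonical singularities if and only if for every $k \in \{1,\dots,r-1\}$ the age
\[
\operatorname{age}(\zeta_r^k) \;=\; \sum_{j=1}^n \left\{\frac{ka_j}{r}\right\}
\]
is at least $1$, where $\{\cdot\}$ denotes the fractional part in $[0,1)$.

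Since $q_0=1$, the chart $U_0$ is smooth, and there is nothing to check there. On each remaining chart $U_i$, $i=1,2,3,4$, the local weight vector is obtained by deleting the $i$-th entry from $(1,2,3,4,5)$ and reducing modulo $q_i$, giving $(1,1,0,1)$ for $U_1$, $(1,2,1,2)$ for $U_2$, $(1,2,3,1)$ for $U_3$, and $(1,2,3,4)$ for $U_4$. I would first verify that each action is small, i.e.\ contains no pseudo-reflection: a pseudo-reflection is a group element with exactly one non-zero weight, and a short inspection of the weights of $\zeta_{q_i}^k$ for all $k=1,\dots,q_i-1$ rules this out in every case (the tightest case is the element $k=2$ in $U_3$, whose weights are $(2,0,2,2)$ modulo $4$, still with three non-zero entries).

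With RST applicable, I would then tabulate the ages. In $U_1$ the unique non-trivial element has age $3/2$; in $U_2$ both non-trivial elements have age $6/3=2$; in $U_3$ the ages for $k=1,2,3$ are $7/4$, $3/2$, $9/4$ respectively; and in $U_4$ every non-trivial $k$ gives age $(1+2+3+4)/5 = 2$, since multiplication by $k$ permutes the non-zero residues modulo the prime~$5$. All of these quantities are $\geq 1$, so the RST criterion holds in every chart, proving the statement.

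There is essentially no conceptual obstacle: the argument reduces to a finite arithmetic check, and the only point that warrants more than a one-line remark is the verification that each local action is small, so that one may apply RST without first passing to the Chevalley--Shephard--Todd subgroup generated by pseudo-reflections. Even that is immediate from the weight tables above.
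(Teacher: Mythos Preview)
Your proof is correct and follows essentially the same route as the paper: both arguments run through the standard charts $U_i\cong\CC^4/\mu_{q_i}$ and verify the Reid--Shepherd-Barron--Tai inequality in each. Your presentation is in fact slightly more careful, in that you explicitly check the absence of pseudo-reflections before invoking the criterion and tabulate the ages systematically; the paper treats $U_4$ in detail, declares the other charts analogous, and in addition records the singular locus ($P_2,P_3,P_4$ and the line $L\subset U_1$), information which is not needed for the lemma itself.
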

\begin{proof}
We first note that $U_0 \cong \CC^4$. To explain the other charts we first treat the chart
$$
U_4= \CC^4/\langle g_4 \rangle\,,
$$
where
$$
g_4\coloneqq (\zeta_{5}, \zeta_{5}^2, \zeta_{5}^3, \zeta_{5}^4)\,.
$$
There is only one fixed point of this finite group action, namely the origin. Here the RSBT criterion tells us that we have to check for all non-trivial powers $g_4^k$ that the inequality
$$
\lfloor \tfrac{k}{5} \rfloor + \lfloor \tfrac{2k}{5} \rfloor + \lfloor \tfrac{3k}{5} \rfloor + \lfloor \tfrac{4k}{5} \rfloor \geq 1
$$
holds. Altogether we find one singularity in this chart, namely the point $P_4=(0:0:0:0:1)$; i.e.,~the image of the point $(0,0,0,0,1)\in \CC^{n+1}-\{0\}$ under the quotient map~$h$.
 The singularity at this point is canonical. The other open sets $U_i$ can be treated similarly. The situation for $U_2$ is completely analogous, and we find one further canonical singularity, namely $P_2=(0:0:1:0:0)$.
For $U_3$ we have to consider
$$
U_3= \CC^4 / \langle g_4 \rangle = \CC^4 / \langle i,-1,-i,i \rangle\,.
$$
Once again, we find one canonical singular point, namely $P_3=(0:0:0:1:0)$.
Finally in the chart
$$
U_1= \CC^4 / \langle g_2 \rangle = \CC^4 / \langle -1,1,-1,-1 \rangle
$$
we have a $1$-dimensional fixed locus, namely the line
$$
L\coloneqq\lbrace x_0=x_2=x_4=0\rbrace\,.
$$
We also note that $g_4^2=g_2$ and that $P_3 \in L$. Outside $P_3$ we have a transversal
singularity along $L$ of type $\CC^3/\langle (-1,-1,-1) \rangle$.
\end{proof}
In the proof of the Lemma we have also verified that
$$
\Sing \GIT= \{P_2\} \cup \{P_4\} \cup L\,,
$$
as already stated in~\cite[\S 6.9]{DvGK}.

We note that the space $\GIT=\PP(1,2,3,4,5)$ is $\QQ$-factorial (since it only has finite quotient singularities), and is thus $\QQ$-Gorenstein, but it is {\em not} Gorenstein.
Indeed, the line bundle $\calO_{\PP^n}(1)$ descends under the cover $h$ \eqref{E:wPn-h} to a $\QQ$-Cartier divisor on $\PP(q_0,\dots,q_n)$, which by abuse of notation we denote by $\calO_{\PP(q_0,\dots,q_n)}(1)$. The lowest multiple of $\calO_{\PP(q_0,\dots,q_n)}(1)$ which is Cartier is then $\calO_{\PP(q_0,\dots,q_n)}\left(\operatorname{lcm}(q_0, \dots ,q_n)\right)$ (e.g., \cite[Prop.~p.63]{fultonToric} or \cite[Ex.~4.1.5 and 4.2.11]{CLS}). For the canonical bundle,
using the fact that the weighted projective space is a toric variety, and that the covering map $h$ is ramified along the toric divisors, one obtains the standard formula (e.g., \cite[Thm.~3.3.4]{dolgachevWP})
\begin{equation}\label{E:KWP}
K_{\PP(q_0,\dots,q_n)}=\left(-\sum q_i \right)\calO_{\PP(q_0,\dots,q_n)}(1)\,.
\end{equation}
Thus in our case
\begin{equation}\label{E:can}
K_{\PP(1,2,3,4,5)}=-15\calO_{\PP(1,2,3,4,5)}(1)\,,
\end{equation}
and its smallest multiple that is Cartier is $4K_{\PP(1,2,3,4,5)}$.

Classical invariant theory for cubic surfaces explicitly identifies the geometric divisors $D_{A_1}$ (the nodal or discriminant) and $R$ (the Eckardt) divisors inside $\GIT\cong \PP(1,2,3,4,5)$. For further use, we review this. By~\cite[1.3 and 6.4]{DvG} the discriminant $D_{A_1}$ is given by the equation
\begin{equation}\label{E:DinIs}
 (I_8^2-2^6 I_{16})^2= 2^{14}(I_{32} + 2^{-3}I_8I_{24})\,,
\end{equation}
where $I_8,I_{16}, I_{24}, I_{32}, I_{40},I_{100}$ are the standard generators of the ring of invariants of the action of $\SL(4,\ZZ)$ on the space of cubics, and the subscripts denote their degrees. As $I_{100}^2$ is a polynomial in the other invariants listed, these degrees show that we are working with $\PP(8,16,24,32,40)\cong\PP(1,2,3,4,5)$. Moreover, the Eckardt divisor is given by $I_{100}^2=0$ (e.g., \cite[\S 6.5]{DvG}).

Pulling back the defining equation \eqref{E:DinIs} of $D_{A_1}$ to $\PP^4$ under the quotient map $h$ given by~\eqref{E:wPn-h}, we obtain
\begin{equation}\label{E:DA1pullback}
 h^*D_{A_1}=\left\lbrace (y_0^2-2^6 {y_1}^2)^2= 2^{14}(y_3^4 + 2^{-3}y_0y_2^3)\right\rbrace\,,
\end{equation}
where $y_0, \dots, y_4$ are homogeneous coordinates on $\PP^4$. Furthermore, \cite{DvGK} gives the coordinates of the point $\Delta_{3A_2}\in\PP(1,2,3,4,5)$ as
$$
\Delta_{3A_2}=(8:1:0:0:0) \in D_{A_1}\,,
$$
which in particular is a smooth point of $\GIT$. It is also a smooth point of $D_{A_1}\subseteq \PP(1,2,3,4,5)$, as in the local coordinates on the open chart $U_0\cong\CC^4$ the defining equation~\eqref{E:DinIs} of the discriminant divisor becomes $(1-2^6z_1)^2 = 2^{14}(z_3 +2^{-3} z_2)$, and $\Delta_{3A_2}$ corresponds to the point $(1/8^2,0,0,0)$, so that taking partial derivatives of this equation at the point $\Delta_{3A_2}$ gives smoothness of $D_{A_1}$ at $\Delta_{3A_2}$.

We can perform a similar analysis for the Eckardt divisor. As mentioned above, the Eckardt divisor $R$ is defined by $I_{100}^2$, which is an irreducible polynomial in $I_8,\dots,I_{40}$. The exact expression due to Salmon for $I_{100}^2$ in terms of  $I_8,\dots,I_{40}$ no longer seems to be easily accessible  in the literature.  Dardanelli and van Geemen recently rederived it for their paper \cite{DvG}, and provided us with the expression, which we have omitted to save space; for reference, it is now available on the first author's website.
From that description, one can easily see that
\begin{equation}\label{E:Rpullback}
h^*R=\calO_{\PP^4}(25).
\end{equation}

As already noted, $\GIT$ has Picard number $1$. For further reference, we collect here the classes of various Weil divisors on $\GIT=\PP(1,2,3,4,5)$:
\begin{equation}\label{equation:invarianttheory}
\begin{aligned}
K_{\GIT}&=\calO_{\PP(1,2,3,4,5)}(-15) & \\
D_{A_1}&=\calO_{\PP(1,2,3,4,5)}(4)& \text{(Discriminant divisor)}\\
R& =\calO_{\PP(1,2,3,4,5)}(25)& \text{(Eckardt divisor)}
\end{aligned}
\end{equation}
These come from \eqref{E:can}, \eqref{E:DA1pullback}, and \eqref{E:Rpullback}, respectively.
In particular, we note that the following relation holds in $\Pic(\GIT)_\QQ$:
\begin{equation} \label{E:rat2}
K_{\PP(1,2,3,4,5)}= - \frac{15}{4} D_{A_1}\,.
\end{equation}

\begin{rem}\label{rem:Eckardt}
There is an important subtle point that we emphasize here.  The divisor $\calO_{\PP^{19}}(1)$ on $(\PP^{19})^{ss}$ descends as a $\QQ$-divisor to the divisor $\frac{1}{8}\calO_{\PP(1,2,3,4,5)}(1)$.  The discriminant in $(\PP^{19})^{ss}$ has degree $32$ (the discriminant has degree $(n+2)(d-1)^{n+1}$ for degree $d$ hypersurfaces in ~$\PP^{n+1}$) and descends to the Weil divisor $D_{A_1}=\calO_{\PP(1,2,3,4,5)}(4)$ on $\PP(1,2,3,4,5)$.
Similarly, the Eckardt divisor on $(\PP^{19})^{ss}$ has degree $100$ (see, e.g., \cite[\S 6.5]{DvG}, or \cite[Thm.~4.1]{CPS15} for an approach that works for more general Eckardt loci) and descends as a $\QQ$-divisor to the divisor $\frac{1}{2}\calO_{\PP(1,2,3,4,5)}(25)$.  However, this is {\em not} the Eckardt divisor $R$ on $\PP(1,2,3,4,5)$, which, as explained above, has class $\calO_{\PP(1,2,3,4,5)}(25)$.  In other words, it is twice the Eckardt divisor on $(\PP^{19})^{ss}$ that descends to the Eckardt divisor $R$ on $\PP(1,2,3,4,5)$.  One can view this as a reflection of the fact that generic Eckardt cubic surfaces have an extra automorphism, as opposed to the case of generic cubic surfaces with an $A_1$ singularity, which do not have any extra automorphisms.

\end{rem}

\subsection{Local structure of the Kirwan blowup along the exceptional divisor}
We recall some relevant computations from \cite[App.~C]{cohcubics}.
First, to employ the Luna Slice Theorem, we will want to understand the stabilizer of the $3A_2$ cubic surface~$S_{3A_2}$, as well as its action on a Luna slice.

To begin, given a cubic surface $S\subseteq \PP^3$, we denote by $\Aut(S)$ the automorphisms of $S$ (which are automorphisms of $S$ as a subvariety of $\PP^3$, and therefore we naturally have $\Aut(S)\subseteq \PGL(4,\CC)$). From our GIT setup, we are also interested in $\Stab(S)\subseteq \SL(4,\CC)$, the stabilizer subgroup, and, since it is sometimes easier to work with, we will also consider the stabilizer $\GL(S)\subseteq \GL(4,\CC)$. We recall from~\cite[App.~C]{cohcubics} that the former two of these stabilizer groups for~$S_{3A_2}$ are 2-dimensional, but we also want to work out the finite parts explicitly. To this end, we denote
$$
D\coloneqq \{\diag(\lambda _0,\lambda _1,\lambda_2,\lambda_3): \lambda_0\lambda_1\lambda_2=\lambda_3^3\}\subseteq \GL(4,\CC)
$$
an auxiliary group, and observe that there is an isomorphism $\TT^3\cong D$ given by $(\lambda_1,\lambda_2,\lambda_3)\mapsto \diag(\lambda_1^{-1}\lambda_2^{-1}\lambda_3^3,\lambda_1,\lambda_2,\lambda_3)$.
We also want
$$
D'\coloneqq D\cap \SL(4,\CC) = \{\diag(\lambda _0,\lambda _1,\lambda_2,\lambda_3): \lambda_0\lambda_1\lambda_2=\lambda_3^3, \ \lambda_0\lambda_1\lambda_2\lambda_3=1 \}\subseteq \SL(4,\CC)\,,
$$
and note the isomorphism $\TT^2\times \mu_4\cong D'$ given by $(\lambda_1,\lambda_2,i^j)\mapsto \diag(\lambda_1^{-1}\lambda_2^{-1}i^{3j},\lambda_1,\lambda_2,i^j)$.
To see that this map is an isomorphism, note that this certainly gives an inclusion $\TT^2\times \mu_4\hookrightarrow D'$, and given $\diag(\lambda _0,\lambda _1,\lambda_2,\lambda_3)\in D'$, the two equations together give $\lambda_3^4=1$, so that $\lambda_3=i^j$ for some $j$. Finally, we denote
$$
D''\coloneqq \{\diag(\lambda _0,\lambda _1,\lambda_2,1): \ \lambda_0\lambda_1\lambda_2=1 \}\subseteq \SL(4,\CC)\,,
$$
and note the isomorphism $\TT^2\cong D''$ given by $(\lambda_1,\lambda_2)\mapsto \diag(\lambda_1^{-1}\lambda_2^{-1},\lambda_1,\lambda_2,1)$.

We use the notation $\SSS_3$ for the group of matrices obtained from the group of invertible diagonal $3\times 3$ complex matrices by applying all possible permutations of the columns. The determination of the relevant stabilizer groups is parallel to the case of the $3D_4$ cubic threefold, treated in~\cite[Prop.B.6]{cohcubics}, and proceeds by an explicit computation, which we put in~\Cref{S:R3A2Norm2}.

\begin{lem}[$3A_2$-stabilizer]
\label{L:R3A2Norm2}
We have:
\begin{enumerate}
\item The group $\Stab(S_{3A_2}$ is equal to
\begin{equation}\label{E:App-GF3D4}
\Stab(S_{3A_2})=
\left\{\left(
\begin{array}{c|c}
\SSS_3&\\ \hline
&\CC^*\\
\end{array}
\right)\in \SL(4,\CC): \lambda_0\lambda_1\lambda_2=\lambda_3^3\right\},
\end{equation}
where the $\lambda_i$ is the unique non-zero element in the $i$-th row, and
the group $\GL(S_{3A_2})$ is equal to
\begin{equation}\label{E:GL-GF3D4}
\GL(S_{3A_2}))=
\left\{\left(
\begin{array}{c|c}
\SSS_3&\\ \hline
&\CC^*\\
\end{array}
\right)\in \GL(4,\CC): \lambda_0\lambda_1\lambda_2=\lambda_3^3\right\}.
\end{equation}

\item There are central extensions
\begin{equation}\label{E:St3D4Ct}
1\to \mu_4\to \Stab(S_{3A_2}) \to \Aut(S_{3A_2})\to 1\,,
\end{equation}
\begin{equation}\label{E:GLSt3D4Ct}
1\to \CC^*\to \GL(S_{3A_2})\to \Aut(S_{3A_2})\to 1\,.
\end{equation}
\item There are short exact sequences
\begin{equation}\label{E:GLStDs3s2-1}
\xymatrix{
1 \ar[r]& D' \ar[r] \ar@{^(->}[d]& \Stab(S_{3A_2}) \ar@{^(->}[d] \ar[r] & S_3\ar[r] \ar@{=}[d]& 1\\
1 \ar[r]& D \ar[r]& \GL(S_{3A_2}) \ar[r] & S_3\ar[r]& 1\\
}
\end{equation}
with the second being split, so that there
is an isomorphism
\begin{equation}\label{E:GLStDs3s2}
\GL(S_{3A_2})\cong D\rtimes S_3\,,
\end{equation}
where the action of $S_3$ on $D$ is to permute the first three entries $\lambda_0,\lambda_1,\lambda_2$.

\item The connected components of the groups above are $\Stab(S_{3A_2})^\circ = D''\cong \TT^2$, $\GL(S_{3A_2})^\circ= D\cong \TT^3$, and $\Aut(S_{3A_2})^\circ \cong \TT^2$. There is a short exact sequence
\begin{equation}\label{E:G3A2L}
1\to \mu_4\to \Stab(S_{3A_2})/\Stab(S_{3A_2})^\circ \to S_3 \to 1\,,
\end{equation}
and we have $\GL(S_{3A_2})/\GL(S_{3A_2})^\circ \cong \Aut(S_{3A_2})/\Aut(S_{3A_2})^\circ \cong S_3$. \qed
\end{enumerate}
\end{lem}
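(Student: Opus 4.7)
The plan is to determine $\GL(S_{3A_2})$ directly by matrix computation, obtain $\Stab(S_{3A_2})$ by intersecting with $\SL(4,\CC)$, and then read off parts (2)--(4) from the explicit descriptions in part~(1).

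For part~(1), I first exploit the geometry of $S_{3A_2}$: it has exactly three singular points $p_0=(1{:}0{:}0{:}0)$, $p_1=(0{:}1{:}0{:}0)$, $p_2=(0{:}0{:}1{:}0)$, all of type $A_2$ and all contained in the unique hyperplane $\{x_3=0\}$. Any $g\in \GL(S_{3A_2})$ induces a projective automorphism of $S_{3A_2}$, which must permute these three points and preserve this hyperplane; the latter forces the last row of $g$ to have the form $(0,0,0,\lambda_3)$, and the former forces the upper-left $3\times 3$ block to be monomial in $\SSS_3$ with non-zero entries $\lambda_0,\lambda_1,\lambda_2$. Substituting the resulting block matrix into $F \coloneqq x_0x_1x_2+x_3^3$ and imposing $g\cdot F = \mu F$ for some $\mu\in\CC^*$, the vanishing of the mixed monomials $x_i x_3^2$ kills the potential upper-right entries, after which comparison of the remaining cubic and $x_3^3$ terms gives the single relation $\lambda_0\lambda_1\lambda_2=\lambda_3^3 (= \mu)$. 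This establishes \eqref{E:GL-GF3D4}, and intersecting with $\SL(4,\CC)$ yields \eqref{E:App-GF3D4}.

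Parts~(2)--(4) then follow formally. For part~(2), the kernels of the maps to $\Aut(S_{3A_2})\subseteq\PGL(4,\CC)$ are the intersections of the scalar subgroup with $\GL(S_{3A_2})$ and $\Stab(S_{3A_2})$, giving $\CC^*$ and $\mu_4$ respectively (since $\lambda I \in \SL(4,\CC)$ forces $\lambda^4=1$). For part~(3), the projection to $S_3$ sends a monomial $\SSS_3$-block to its underlying permutation, with kernels $D$ and $D'$ by definition; the $\GL$-sequence is split via $\sigma \mapsto P_\sigma \oplus (1)$, whose entries $\lambda_i=1$ trivially satisfy $\lambda_0\lambda_1\lambda_2=1=\lambda_3^3$. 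For part~(4), $D\cong\TT^3$ is a torus, hence connected, while $D'\cong\TT^2\times\mu_4$ has identity component the $\TT^2$-factor, which coincides with $D''$; combined with the fact that $\CC^*\cdot I$ lies in $D$, this gives $\GL(S_{3A_2})^\circ=D$, $\Stab(S_{3A_2})^\circ=D''$, $\Aut(S_{3A_2})^\circ\cong D/\CC^*\cong \TT^2$, and the component-group sequence \eqref{E:G3A2L} upon passing to component groups in the top row of \eqref{E:GLStDs3s2-1} and identifying $D'/D''\cong\mu_4$ via the coordinate $\lambda_3\in\mu_4$.

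The only genuinely geometric input is the monomiality of the upper-left block, i.e.\ ruling out a general linear mixing of $x_0,x_1,x_2$; this uses the intrinsic characterization of the three $A_2$-points as the singular locus and of $\{x_3=0\}$ as the unique hyperplane through them. Everything else is bookkeeping with the explicit defining polynomial. The overall argument is formally parallel to that for the $3D_4$ cubic threefold given in \cite[Prop.~B.6]{cohcubics}, to which the lemma statement already refers.
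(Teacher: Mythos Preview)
Your argument is correct and follows essentially the same route as the paper's appendix proof: use the three $A_2$ singularities to force the block-monomial shape, impose $g\cdot F=\mu F$ to kill the upper-right entries and obtain $\lambda_0\lambda_1\lambda_2=\lambda_3^3$, then read off (2)--(4) formally. Two small slips worth fixing: in the expansion it is the $x_ax_bx_3$ terms (linear in $x_3$), not the $x_ix_3^2$ terms, that directly annihilate each upper-right entry; and for the top row of \eqref{E:GLStDs3s2-1} you should check (as the paper does by exhibiting explicit matrices) that $\Stab(S_{3A_2})\to S_3$ is still surjective, since your section $\sigma\mapsto P_\sigma\oplus(1)$ lands in $\SL(4,\CC)$ only for even $\sigma$.
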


We now describe the action of the stabilizer of the $S_{3A_2}$ cubic on the Luna slice. The description of the Luna slice appears in \cite[p.52]{ZhangCubic}, but for clarity, particularly for the action of the stabilizer, we include a discussion here as well.

\begin{lem}[$3A_2$-Luna slice]\label{L:3A2slice}
A Luna slice for $S_{3A_2}$, normal to the orbit $\SL(4,\CC)\cdot [S_{3A_2}] \subseteq \PP^{19}$, is isomorphic to $\CC^6$, spanned by the $6$ monomials
$$
x_0^3,\ x_1^3,\ x_2^3,\ x_0^2x_3,\ x_1^2x_3, \ x_2^2x_3
$$
in the tangent space $H^0(\PP^3,\calO_{\PP^3}(3))$.

The Luna slice can be projectively completed to give a $\PP^6$:
$$
\PP^6 = \{\alpha_0x_0^3 +\alpha_1x_1^3 +\alpha_2x_2^3+\alpha_\hzero x_0^2x_3+\alpha_\hone x_1^2x_3 +\alpha_\htwo x_2^2x_3 +\alpha_{3A_2}(x_0x_1x_2+x_3^3) \} $$
$$
\subseteq \PP H^0(\PP^3,\calO_{\PP^3}(3))=\PP^{19}\,.
$$
The action of $\Stab(S_{3A_2})$ and $\GL(S_{3A2})$ on the projectively completed Luna slice is given by their inclusion into the groups $\SL(4,\CC)$ and $GL(4,\CC)$, respectively, with the given actions of those groups on $H^0(\PP^3,\calO_{\PP^3}(3))$ from the GIT setup.

The action of $\Stab(S_{3A_2})$ and $\GL(S_{3A2})$ on the Luna slice is the natural induced action. In terms of \Cref{L:R3A2Norm2}(3) and the description in~\eqref{E:GLStDs3s2-1}, the action of an element $\diag(\lambda_0,\lambda_1,\lambda_2,\lambda_3) $ in $D$ or $D'$ is given by
\begin{equation}\label{E:LunaAction}
\begin{aligned}
(\lambda_0,\lambda_1,\lambda_2,\lambda_3)&\cdot (\alpha_0,\alpha_1,\alpha_2,\alpha_\hzero ,\alpha_\hone ,\alpha_\htwo )=\\ &=\left(\left(\frac{\lambda_0}{\lambda_3}\right)^3\alpha_0,\left(\frac{\lambda_1}{\lambda_3}\right)^3\alpha_1,\left(\frac{\lambda_2}{\lambda_3}\right)^3\alpha_2,
\left(\frac{\lambda_0}{\lambda_3}\right)^2\alpha_\hzero , \left(\frac{\lambda_1}{\lambda_3}\right)^2\alpha_\hone , \left(\frac{\lambda_2}{\lambda_3}\right)^2\alpha_\htwo \right)\,,
\end{aligned}
\end{equation}
and the action of $\sigma\in S_3\subseteq \GL(S_{3A_2})$ is given by
\begin{equation}\label{E:3A2-S3-act}
\sigma\cdot (\alpha_0,\alpha_1,\alpha_2,\alpha_\hzero ,\alpha_\hone ,\alpha_\htwo )=
(\alpha_{\sigma(0)},\alpha_{\sigma(1)},\alpha_{\sigma(2)},\alpha_{\widehat {\sigma(0)}},\alpha_{\widehat{\sigma(1)}},\alpha_{\widehat{\sigma(2)}})\,.
\end{equation}
\end{lem}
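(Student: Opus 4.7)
The plan is to verify the Luna slice description by a direct tangent-space computation at the polystable point $[F_0] \coloneqq [x_0x_1x_2 + x_3^3] \in \PP^{19}$, followed by writing down the action of $\Stab(S_{3A_2})$ on an invariant complement to the orbit tangent space.

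First, I would compute the image of the differential of the orbit map at the identity, i.e.\ the map $\mathfrak{sl}_4 \to T_{[F_0]} \PP^{19} \cong H^0(\PP^3, \calO(3))/\langle F_0 \rangle$ sending $A \mapsto A\cdot F_0$. Using $\partial_0 F_0 = x_1x_2$, $\partial_1 F_0 = x_0x_2$, $\partial_2 F_0 = x_0x_1$, and $\partial_3 F_0 = 3x_3^2$, one checks that applying each standard basis element $E_{ij}$ of $\mathfrak{sl}_4$ to $F_0$ yields a single monomial (up to sign and modulo $F_0$). Enumerating them, the $13 = \dim \SL(4,\CC) - \dim \Stab(S_{3A_2})$ linearly independent images span precisely the subspace of monomials complementary to $V \coloneqq \langle x_0^3, x_1^3, x_2^3, x_0^2 x_3, x_1^2 x_3, x_2^2 x_3 \rangle$. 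The dimension match then shows that $V$ is transverse to the orbit tangent space.

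Next, I would check that $V$ is $\Stab(S_{3A_2})$-stable: the torus factor $D' \subseteq \Stab(S_{3A_2})$ from~\Cref{L:R3A2Norm2}(3) acts diagonally on the six monomial basis vectors (since it is diagonal in the variables $x_0,\ldots,x_3$), while the $S_3$ factor permutes $\{x_0,x_1,x_2\}$ and hence preserves each of the two triples of monomials in $V$. Invoking the Luna slice theorem at $[F_0]$, the affine subspace $\{[F_0+v]: v \in V\} \subseteq \PP^{19}$ then serves as a Luna slice. Its projective completion is the linear span $\PP(\CC \cdot F_0 \oplus V) \subseteq \PP^{19}$, which is exactly the $\PP^6$ described in the statement.

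Finally, the explicit action formulas follow by direct calculation. For $g = \diag(\lambda_0, \lambda_1, \lambda_2, \lambda_3) \in D$, the monomial $x_i^3$ scales by $\lambda_i^3$, the monomial $x_i^2 x_3$ scales by $\lambda_i^2 \lambda_3$, and $F_0$ scales by $\lambda_0\lambda_1\lambda_2 = \lambda_3^3$ (using the defining constraint on $D$). On the affine slice, where the $F_0$-coefficient is normalized to $1$, dividing through by $\lambda_3^3$ yields~\eqref{E:LunaAction}. Formula~\eqref{E:3A2-S3-act} is immediate from how a permutation of $\{x_0,x_1,x_2\}$ acts on the basis monomials. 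There is no substantive obstacle to the proof beyond careful bookkeeping; the main subtlety is the projective normalization in the last step, which is what introduces the factors of $\lambda_3$ in the denominators of~\eqref{E:LunaAction}.
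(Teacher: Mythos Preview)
Your proposal is correct and follows essentially the same approach as the paper: the paper computes the orbit tangent space by writing out the matrix $DF_{3A_2}$ with entries $x_i\partial_j F_0$ (equivalent to your enumeration of $E_{ij}\cdot F_0$), identifies the six complementary monomials by inspection, and then derives the action formula exactly as you do by scaling and dividing through by $\lambda_3^3$ to normalize the coefficient of $x_0x_1x_2+x_3^3$. Your explicit verification that $V$ is $\Stab(S_{3A_2})$-stable is a welcome addition that the paper leaves implicit.
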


\begin{proof}
For $S_{3A_2}=\lbrace F_{3A_2}=0\rbrace$, the matrix $DF_{3A_2}$, whose entries span the tangent space to the orbit of the $3A_2$ cubic, is given by (see \cite[\S 4.2.3]{cohcubics} for similar computations for the $3D_4$ cubic threefold)
$$
 DF_{3A_2}=\begin{pmatrix}
 x_0x_1x_2&x_1^2x_2&x_1x_2^2&x_1x_2x_3\\
 x_0^2x_2&x_0x_1x_2&x_0x_2^2&x_0x_2x_3\\
 x_0^2x_1&x_0x_1^2&x_0x_1x_2&x_0x_1x_3\\
 3x_0x_3^2&3x_1x_3^2&3x_2x_3^2&3x_3^3\\
 \end{pmatrix}\,.
$$
Since all entries of this matrix are monomial, the only possible linear relations are pairwise equalities, up to a constant factor. One sees that the only monomial that repeats more than once is $x_0x_1x_2$, and thus all linear relations satisfied by the entries of $DF_{3A_2}$ are
$$
 (DF_{3A_2})_{00}=(DF_{3A_2})_{11}=(DF_{3A_2})_{22}\,.
$$

This means that the normal space to the orbit ($\dim \PP^{19}-\dim \text{ orbit } = 19-(16-3)=6$) is spanned by the $6$ monomials
$$
x_0^3,\ x_1^3,\ x_2^3,\ x_0^2x_3,\ x_1^2x_3, \ x_2^2x_3\,.
$$

The given action of the stabilizer on the Luna slice can be seen for instance from taking the Luna slice to be the affine space
$$
\alpha_0x_0^3 +\alpha_1x_1^3 +\alpha_2x_2^3+\alpha_\hzero x_0^2x_3+\alpha_\hone x_1^2x_3 +\alpha_\htwo x_2^2x_3 +(x_0x_1x_2+x_3^3)\subseteq H^0(\PP^3,\calO_{\PP^3}(3))\,,
$$
then using the fact that $\lambda_0\lambda_1\lambda_2=\lambda_3^3$, and dividing through the natural action by $\lambda_3^3$, to fix the cubic form $x_0x_1x_2+x_3^3$ in the affine space.
\end{proof}
We now turn to the Eckardt divisor whose generic point parameterizes  smooth cubics with a non-trivial automorphism, and determine the multiplicity with which it contains~$S_{3A_2}$. The proof is by an elaborate lengthy explicit computation using the explicit form of the action on the Luna slice, and is given in~\Cref{S:Eck-no-3A2}.
\begin{lem}\label{L:Eck-no-3A2}
The Eckardt divisor $R\subseteq \PP H^0(\PP^3,\calO_{\PP^3}(3))=\PP^{19}$ contains the $3A_2$ orbit $\SL(4,\CC) \cdot [S_{3A_2}]$ with multiplicity $\mu=15$; i.e.,~if $\tilde \pi: \Bl_{\SL(4,\CC) \cdot [S_{3A_2}]}(\PP^{19})^{ss}\to (\PP^{19})^{ss}$ is the blowup of the $3A_2$ orbit, with exceptional divisor $D_{3A_2}$, then $\tilde \pi^*R= \widetilde R+15D_{3A_2}$, where $\widetilde R\subset\oBG$ is the strict transform of~$R$. \qed
\end{lem}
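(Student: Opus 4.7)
By the Luna Slice Theorem applied at $[F_{3A_2}]$ and the description of the slice in \Cref{L:3A2slice}, the orbit $\SL(4,\CC)\cdot[F_{3A_2}]$ meets the 6-dimensional Luna slice transversally at the origin. Consequently, the multiplicity $\mu$ is the order of vanishing at the origin of the pullback to the slice of any defining equation for $R$. Taking the classical degree-$100$ $\SL(4,\CC)$-invariant $I_{100}$ as the defining equation of $R \subseteq \PP^{19}$, the lemma reduces to the statement $\op{ord}_0(I_{100}|_{\text{slice}}) = 15$.

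Since a closed-form expression for $I_{100}$ is not readily available, we pass to $I_{100}^2$, which is expressible as a polynomial $P(I_8, I_{16}, I_{24}, I_{32}, I_{40})$ in the lower-degree $\SL(4,\CC)$-invariants (Salmon's formula, as reproduced by Dardanelli--van Geemen). It suffices to show $\op{ord}_0(I_{100}^2|_{\text{slice}}) = 30$, after which $\mu = 15$ follows by unique factorization in the polynomial ring. The computation then proceeds by substituting the slice parametrization $F_\alpha = F_{3A_2} + \alpha_0 x_0^3 + \alpha_1 x_1^3 + \alpha_2 x_2^3 + \alpha_{\hzero} x_0^2 x_3 + \alpha_{\hone} x_1^2 x_3 + \alpha_{\htwo} x_2^2 x_3$ into each $I_{8k}$ to obtain polynomials in the slice coordinates, and then substituting these into $P$ and expanding. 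By $\SL(4,\CC)$-invariance, each intermediate polynomial is invariant under the $\Stab(S_{3A_2})$-action given in \eqref{E:LunaAction} and \eqref{E:3A2-S3-act}, so only $\Stab$-invariant monomials in $\alpha_i, \alpha_{\widehat i}$ can appear. Using the weights of the effective 2-dimensional torus action together with the $S_3$- and $\mu_4$-invariances, the lowest-degree invariant monomials are easily enumerated: $A \coloneqq \alpha_0\alpha_1\alpha_2$ and $B \coloneqq \alpha_{\hzero}\alpha_{\hone}\alpha_{\htwo}$ in degree $3$, then $A^2, AB, B^2$ in degree $6$, an $S_3$-orbit sum of $\alpha_i^2 \alpha_j^2 \alpha_{\widehat k}^3$-type terms in degree $7$, and so on. This reduces the bookkeeping to tracking finitely many coefficients at each degree.

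The explicit calculation, most efficiently done with computer algebra, shows that the Taylor coefficients of all $\Stab$-invariant monomials of degree less than $30$ in $I_{100}^2|_{\text{slice}}$ cancel, and that the coefficient of a specific monomial of degree $30$ is non-zero. The non-vanishing at degree $30$ can be verified by restricting $I_{100}^2$ to a suitably generic line $\alpha = tv$ on which $F_{tv}$ has trivial automorphism group for small $t \ne 0$; such $v \in \CC^6$ exist because the union of fixed loci of non-identity elements of $\Stab(S_{3A_2})$ is a proper closed subset of the slice, and $R$ contains precisely the smooth cubics with non-trivial automorphism group. On such a line, $I_{100}^2(F_{tv})$ is a non-zero polynomial in $t$, and its order at $t=0$ realizes $\op{ord}_0(I_{100}^2|_{\text{slice}}) = 30$. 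The main obstacle is the sheer size of Salmon's expression for $I_{100}^2$ in terms of $I_8, \ldots, I_{40}$ and the corresponding bookkeeping required to track cancellations up through degree $30$; this is precisely what makes the explicit computation of \Cref{S:Eck-no-3A2} ``elaborate and lengthy''.
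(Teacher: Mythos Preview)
Your approach—computing the order of vanishing of $I_{100}^2|_{\text{slice}}$ directly via Salmon's formula and computer algebra—is valid in principle and would establish the result, but it takes a genuinely different route from the paper's proof.

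The paper never touches Salmon's formula for $I_{100}^2$. Instead, it exploits the \emph{geometric} characterization of the Eckardt divisor: near the $3A_2$ orbit, $R$ is precisely the locus of cubics with non-trivial automorphism group, and generically that automorphism group is $\ZZ_2$. Since (by the Luna Slice Theorem) the stabilizer of any cubic near the origin of the slice is a subgroup of $\Stab(S_{3A_2})$, the paper classifies all elements $A \in \Stab(S_{3A_2})$ whose image in $\PGL(4,\CC)$ has order $2$, and for each determines the fixed locus in the slice under the action \eqref{E:LunaAction}. A case-by-case check shows that only one family of such $A$ (Case II(ii) in the appendix notation) has a divisorial fixed locus; this locus is the hypersurface $\alpha_1^2\alpha_{\hzero}^3 - \alpha_0^2\alpha_{\hone}^3 = 0$, together with its two $S_3$-translates. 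Each is a homogeneous degree-$5$ cone through the origin, hence has multiplicity $5$ there, and $\mu = 3 \times 5 = 15$.

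What each approach buys: the paper's method yields the explicit irreducible components of $R|_{\text{slice}}$ with no invariant-theoretic input, and the multiplicity $15$ falls out by inspection; it also feeds directly into the subsequent stabilizer analyses (\Cref{P:stab--ex}, \Cref{P:discMK}). Your method is more mechanical once Salmon's formula is in hand, but that formula is large and not readily accessible (the paper itself remarks on this), so the ``elaborate and lengthy'' computation you anticipate is substantially heavier than the group-theoretic case analysis the paper actually carries out. One further point: your generic-line paragraph only establishes that $I_{100}|_{\text{slice}}$ is not identically zero; it does not by itself pin down the exact order $30$, so the computer-algebra verification is still doing all the work there.
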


One can see from the above that for the action of $\Stab(S_{3A_2})$ on the normal space, the stabilizer of a general line will be trivial. In fact, when we blow up $(\PP^{19})^{ss}$ along the orbit $\SL(4,\CC) \cdot [S_{3A_2}]$ in the Kirwan blowup process, then in the Luna slice we are blowing up the origin in $\CC^6$, with exceptional divisor $\PP^5$. The Lemma above gives the action of the stabilizer on this $\PP^5$.

For future use, we first describe the semi-stable locus for this action of the stabilizer on $\PP^5$.
\begin{lem}\label{L:LunaStability}
Denoting by $(T_0:T_1:T_2:T_\hzero:T_\hone:T_\htwo)$ the homogeneous coordinates on the exceptional divisor~$\PP^5$ of the Kirwan blowup $\Bl_0\CC^6\to\CC^6$ of the Luna slice described above, the {\em unstable} locus of the action of $\GL(S_{3A_2})$ is the union of the three codimension two loci $\lbrace T_0=T_\hzero=0\rbrace$, $\lbrace T_1=T_\hone=0\rbrace$, $\lbrace T_2=T_\htwo=0\rbrace$ in $\PP^5$.
\end{lem}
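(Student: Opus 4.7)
The plan is to apply the Hilbert--Mumford numerical criterion directly to the induced action on $\PP^5$. The first step is to reduce to a two-dimensional torus. By \Cref{L:R3A2Norm2} the connected component of $\GL(S_{3A_2})$ is the three-dimensional torus $D$, but the diagonal one-parameter subgroup $\{\diag(t,t,t,t):t\in \CC^*\}\subset D$ acts trivially on $\PP^5$; indeed, in~\eqref{E:LunaAction} all six weights depend only on the ratios $\lambda_i/\lambda_3$. Factoring out this subgroup leaves an effective two-dimensional torus action, realized concretely by the restriction to $D''\cong \TT^2$. The finite factor $S_3\subset \GL(S_{3A_2})$ appearing in~\eqref{E:GLStDs3s2} permutes the three pairs $\{T_i,T_{\hat i}\}$ simultaneously by~\eqref{E:3A2-S3-act}, and so preserves the union of the three claimed loci; hence it suffices to determine the unstable locus for the $D''$-action.

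Next, I would read off the weights of $D''$ from~\eqref{E:LunaAction}. Parameterizing $D''$ by $(\lambda_1,\lambda_2)\in (\CC^*)^2$ with $\lambda_0=(\lambda_1\lambda_2)^{-1}$, the coordinate $T_i$ has weight equal to $3$ times the character of $\lambda_i$ and $T_{\hat i}$ has weight equal to $2$ times the character of $\lambda_i$. The six weights thus occupy three rays through the origin of the character lattice $\ZZ^2$: the pair $(T_0,T_\hzero)$ on the ray through $(-1,-1)$, the pair $(T_1,T_\hone)$ on the positive $\lambda_1$-axis, and $(T_2,T_\htwo)$ on the positive $\lambda_2$-axis. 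These three rays span $\RR^2$, and the origin lies strictly in the interior of the triangle formed by any selection of one weight from each of the three rays.

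Finally, I would apply the numerical criterion in the form: a point $[T_0:\cdots:T_\htwo]\in \PP^5$ is semistable for the torus action with the natural $\calO(1)$-linearization if and only if $0$ lies in the convex hull of the weights corresponding to its non-vanishing coordinates. The ``tripod'' configuration above gives both implications at once: if each of the three pairs $\{T_i,T_{\hat i}\}$ contains a non-zero coordinate, then selecting one weight per ray places $0$ in the interior of their convex hull and the point is semistable; conversely, if one entire pair vanishes, the four remaining weights all lie in the closed cone spanned by the other two rays, which is contained in an open half-plane, yielding a destabilizing one-parameter subgroup. This identifies the unstable locus as the asserted union of three codimension-two linear subspaces.

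There is no substantive obstacle; once the weights are tabulated the argument is a textbook Hilbert--Mumford computation. The only mild subtlety is the initial reduction to the effective two-torus, to avoid being distracted by the apparent three-dimensionality of $D$, together with the observation that the non-connected part $S_3$ permutes -- rather than further decomposes -- the three components of the unstable locus.
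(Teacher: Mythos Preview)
Your proposal is correct and follows essentially the same approach as the paper: both reduce to the effective two-torus $D''\cong \TT^2$, tabulate the six weights as two points on each of three rays through the origin, and invoke the convex-hull form of the Hilbert--Mumford criterion. Your write-up is in fact slightly more careful than the paper's in justifying the reduction from $\GL(S_{3A_2})$ to $D''$ (factoring out the trivially-acting diagonal $\CC^*$ and noting that the residual $S_3$ merely permutes the three components), points the paper's proof leaves implicit.
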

\begin{proof}
The action on the Luna slice given by~\eqref{E:LunaAction} gives the following action of $\TT^2\simeq D''=\Aut(S_{3A_2})^\circ$ on the $\CC^6$ with coordinates $T$, of which the exceptional divisor is the projectivization:
\begin{equation}\label{E:Thomogeneous}
\begin{aligned}
 (T_0,T_1,T_2,T_\hzero,T_\hone,T_\htwo)&\mapsto
 (\lambda_0^3T_0,\lambda_1^3 T_1,\lambda_2^3 T_2, \lambda_0^2T_\hzero,\lambda_1^2T_\hone,\lambda_2^2T_\htwo)\\
 &=(\lambda_1^{-3}\lambda_2^{-3} T_0,\lambda_1^3 T_1,\lambda_2^3 T_2, \lambda_1^{-2}\lambda_2^{-2}T_\hzero,\lambda_1^2T_\hone,\lambda_2^2T_\htwo)
\end{aligned}
\end{equation}
(here we are acting by $\diag(\lambda_0,\lambda_1,\lambda_2,1)$ with $\lambda_0\lambda_1\lambda_2=1$, and thus expressing $\lambda_0=\lambda_1^{-1}\lambda_2^{-1}$). The action is by multiplying each coordinate by a monomial in $\lambda_1,\lambda_2$, and thus $\CC^6$ is decomposed into a direct sum of 6 one-dimensional torus representations. Plotting the weights of each monomial in~$\RR^2$, a point in $\CC^6$ is stable if and only if the convex hull of the set of weights corresponding to non-zero coordinates contains the origin in~$\RR^2$ (e.g., \cite[Lem.~3.10]{allcock} or \cite[Thm.~12.2]{D03_LOIT}). 
The weight diagram consists of 2 points on each of 3 rays from the origin. Thus a convex hull of some subset of these 6 weights contains the origin if and only if this subset contains at least one weight from each ray. This is to say, a point is stable if and only if at least one of its two coordinates $T_0$ and $T_\hzero$ is non-zero, etc. Thus the unstable points in $\CC^4\subseteq \CC^6$ are precisely those given by a pair of equations $T_i=T_{\widehat i}=0$ for some~$i$.
\end{proof}

We note that, as is the case for any Kirwan desingularization, there are no strictly semi-stable points on this exceptional divisor.
We now describe the finite stabilizers along the exceptional divisor. This is another detailed explicit computation using the stabilizer computed in~\Cref{L:R3A2Norm2}, and we give it in~\Cref{S:stab--ex}. We note that the proof actually allows us to determine all possible stabilizers, but we will not need this information.
\begin{pro}[Stabilizers along the exceptional divisor]\label{P:stab--ex}
Let $x\in D_{3A_2}\subseteq \MK$ be a point in the exceptional divisor, and let $S_x\subseteq \Stab(S_{3A_2})\subseteq \SL(4,\CC)$ be its stabilizer; i.e., the stabilizer of a point in the exceptional divisor of  $\Bl_{\SL(4,\CC) \cdot [S_{3A_2}]}(\PP^{19})^{ss}$ with orbit corresponding to $x$.
\begin{enumerate}
\item For $x\in D_{3A_2}$ general, $S_x=\mu_4$ (the diagonal subgroup of $\SL(4,\CC)$).
\item For {\em any} $x\in D_{3A_2}$, the order of $S_x$ is {\em not} divisible by $5$. \qed
\end{enumerate}
\end{pro}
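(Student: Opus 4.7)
The plan is to work inside the Luna slice description of~\Cref{L:3A2slice}: $D_{3A_2}\subseteq \MK$ is locally the GIT quotient of the semistable locus of $\PP^5 = \PP(\CC^6)$ by the action of $\Stab(S_{3A_2})$ given by~\eqref{E:LunaAction} and~\eqref{E:3A2-S3-act}, so $S_x$ is exactly the stabilizer in $\Stab(S_{3A_2})$ of a semistable point of $\PP^5$ representing~$x$. A preliminary observation is that the diagonal central $\mu_4 \subseteq \SL(4,\CC)$ coincides with the kernel of $\Stab(S_{3A_2}) \to \Aut(S_{3A_2})$ (by~\eqref{E:St3D4Ct}), and since $\mu_4$ acts trivially on $\PP^3$, it acts trivially on the Luna slice $\CC^6$ and hence on $\PP^5$. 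Thus $\mu_4\subseteq S_x$ for every~$x$, and both claims reduce to controlling the residual stabilizer inside $\Aut(S_{3A_2})\cong \TT^2 \rtimes S_3$.

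For part~(1), I would first check that the torus $D''\cong \TT^2$ acts on $\PP^5$ with trivial generic stabilizer: on the dense open subset where all six coordinates $T_0,T_1,T_2,T_\hzero,T_\hone,T_\htwo$ are non-zero, requiring $\diag(\lambda_0,\lambda_1,\lambda_2,1)\in D''$ to scale every coordinate by a common constant~$c$ forces $\lambda_i^3=\lambda_i^2=c$, and hence $\lambda_i=1$ for every~$i$. Each non-trivial $\sigma\in S_3$ acts on $\PP^5$ by a non-trivial coordinate permutation via~\eqref{E:3A2-S3-act}, so the locus of $\PP^5$-points fixed modulo~$D''$ by any non-identity element of $\TT^2\rtimes S_3$ is a proper closed subvariety. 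Combining these two vanishing statements, a dense open subset of the semistable locus has $\mu_4$ as its full stabilizer in $\Stab(S_{3A_2})$, which gives~(1).

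For part~(2), I will argue by contradiction: suppose $g\in \Stab(S_{3A_2})$ has order~$5$ and fixes some semistable point of~$\PP^5$. Since $|S_3|=6$ is coprime to~$5$, the sequence~\eqref{E:GLStDs3s2-1} forces $g\in D'$; writing $g=\diag(\lambda_0,\lambda_1,\lambda_2,\lambda_3)$ with $\lambda_3^4=1$, the order-$5$ hypothesis then forces $\lambda_3=1$ and hence $\lambda_0\lambda_1\lambda_2=1$. The fixed-point condition on $\PP^5$ demands that the scaling factors on all non-vanishing coordinates agree; by~\eqref{E:LunaAction} this translates to $\lambda_i^3=c$ whenever $T_i\neq 0$ and $\lambda_i^2=c$ whenever $T_{\widehat i}\neq 0$. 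In particular, whenever both coordinates with a given index~$i$ are non-zero we get $\lambda_i=1$ and $c=1$. A short case analysis indexed by which of the coordinates $T_i,T_{\widehat i}$ vanish for each $i\in\{0,1,2\}$ (with the semistability criterion of~\Cref{L:LunaStability} excluding $T_i=T_{\widehat i}=0$) then shows that the order of $g$ must divide one of $6$, $9$, $21$, or $48$, each of which is coprime to~$5$, contradicting $|g|=5$. The main obstacle is the book-keeping in this elementary but multi-case computation.
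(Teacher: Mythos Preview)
Your plan is correct and follows essentially the same route as the paper: reduce via the Luna slice to the $\Stab(S_{3A_2})$-action on $\PP^5$, peel off the central $\mu_4$, and then analyze the residual $\TT^2\rtimes S_3$-stabilizers case by case using the semistability criterion of \Cref{L:LunaStability}. Your organization for~(2)---by Cauchy's theorem reducing to an order-$5$ element, forcing it into $D''$, and then splitting by which of $T_i,T_{\widehat i}$ vanish---is equivalent to the paper's chart-by-chart computation and yields the same list of stabilizer orders (the groups arising are subgroups of $\ZZ_3\times\ZZ_9$, $\ZZ_{21}$, $\ZZ_{16}$, $\ZZ_2\times\ZZ_6$).

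Two small remarks. For~(1), the paper actually defers to the Eckardt divisor computation of \Cref{L:Eck-no-3A2}, which is overkill here; your direct argument is cleaner. However, the sentence ``$\sigma$ acts by a non-trivial coordinate permutation, so the locus fixed modulo~$D''$ is proper'' is too quick: a non-trivial permutation could in principle preserve every $D''$-orbit. You should add one line, e.g.\ for $\sigma=(0\,1)$ compute that on the all-coordinates-nonzero locus the condition $\sigma P\in D''\cdot P$ forces $T_1^2T_{\hzero}^3=T_0^2T_{\hone}^3$, a proper hypersurface. With that, the argument is complete.
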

\begin{rem}\label{rem:stabilizertrivial}
The proof of \Cref{P:discMK}, given in \Cref{S:discMK}, will provide another proof of the claim (1) of \Cref{P:stab--ex} above. There, we will even show that this assertion holds for a general point of the intersection of the strict transform $\widetilde D_{A_1}$ of the discriminant with the exceptional divisor $D_{3A_2}$. We note also that part (2) above is what will enable us to argue that the top self-intersection numbers of the canonical class on $\MK$ and $\oBG$ are different.
\end{rem}

We conclude the section with the following non-transversality result:
\begin{pro}\label{P:discMK}
At a generic point of the intersection $\widetilde D_{A_1}\cap D_{3A_2}\subseteq \MK$, these two divisors do not meet transversally.
\end{pro}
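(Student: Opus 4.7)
The plan is to work étale-locally near $\Delta_{3A_2}$ via the Luna slice of \Cref{L:3A2slice}. By Luna's theorem, an étale neighborhood of $D_{3A_2}$ in $\MK$ is identified with the quotient $\Bl_0\CC^6/\Stab(S_{3A_2})$, under which $D_{3A_2}$ corresponds to (the image of) the exceptional $E\cong\PP^5\subset\Bl_0\CC^6$. The non-transversality will be established in $\Bl_0\CC^6$ and then descended by checking that the generic stabilizer along the relevant locus is only the central $\mu_4\subset\SL(4,\CC)$, which acts trivially on the slice by~\eqref{E:LunaAction}.

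The key step, and the main technical obstacle, is to identify the analytic-local shape of (the pullback of) $D_{A_1}$ on the Luna slice. The $3A_2$ cubic has its three $A_2$ singularities at $p_i=[e_i]$ for $i=0,1,2$, and for small $(\alpha)$ the perturbed cubic $F$ from \Cref{L:3A2slice} is singular precisely when at least one of its three nearby singularities (one near each $p_i$) fails to smooth. Dehomogenizing near $p_0$ (setting $x_0=1$), $F$ reads $x_1x_2+x_3^3+\alpha_\hzero\, x_3+\alpha_0+(\text{cubic in }x_1,x_2,x_3)$, and after an analytic change of coordinates this is the versal $A_2$-deformation $y_1y_2+y_3^3+a(\alpha)\,y_3+b(\alpha)$ with $a=\alpha_\hzero+O(|\alpha|^2)$ and $b=\alpha_0+O(|\alpha|^2)$; analogous statements hold at $p_1,p_2$. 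Since the Jacobian at the origin of the resulting map $\CC^6\to\CC^2\times\CC^2\times\CC^2$ to the product of local versal $A_2$-deformation spaces is the identity, this map is a local analytic isomorphism, and the pullback of $D_{A_1}$ to the Luna slice therefore decomposes, near the origin, as the union of three cuspidal analytic hypersurfaces
$$
D_i=V\bigl(27\,\alpha_i^2+4\,\alpha_{\widehat{i}}^3+\text{h.o.t.}\bigr),\qquad i=0,1,2,
$$
permuted by the $S_3\subset\Stab(S_{3A_2})$.

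Next I would blow up the origin of the Luna slice and study the strict transform of, say, $D_0$. In the affine chart $\alpha_\hzero=t$, $\alpha_i=tT_i$ for $i\in\{0,1,2,\hone,\htwo\}$, the exceptional divisor is $E=\{t=0\}$ and the total transform of $D_0$ factors as $t^2\bigl(27T_0^2+4t+t\,P(T)+O(t^2)\bigr)$ for some polynomial $P$. Thus $\widetilde D_0$ is smooth in a neighborhood of $\widetilde D_0\cap E=\{T_0=t=0\}$, and at a generic such point the differential of its defining equation equals $(4+P(T))\,dt$, a nonzero multiple of $dt$. The tangent hyperplane of $\widetilde D_0$ therefore coincides with that of $E$, showing that $\widetilde D_0$ and $E$ meet tangentially rather than transversally. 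Finally, to descend this to $\MK$, I would argue as in the proof of \Cref{P:stab--ex}: at a general point of $\widetilde D_0\cap E$ with $T_0=0$ and the remaining $T_\bullet$ generic, the equations forced on elements of $D'$ by~\eqref{E:LunaAction} together with the permutation action~\eqref{E:3A2-S3-act} reduce the stabilizer in $\Stab(S_{3A_2})$ to the central $\mu_4$, which acts trivially on the Luna slice; hence the quotient map $\Bl_0\CC^6\to\MK$ is étale at such a point, and the tangency of $\widetilde D_0$ with $E$ descends to the asserted non-transversality of $\widetilde D_{A_1}$ with $D_{3A_2}$ in $\MK$.
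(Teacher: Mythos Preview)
Your overall strategy matches the paper's: work in the Luna slice, identify the discriminant as a product of three cuspidal hypersurfaces $27\alpha_i^2+4\alpha_{\widehat i}^3$, blow up the origin, and show the strict transform is tangent to the exceptional divisor. However, the final descent step contains a genuine error.

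You write that ``the quotient map $\Bl_0\CC^6\to\MK$ is \'etale at such a point.'' This cannot be true: $\Bl_0\CC^6$ is $6$-dimensional while $\MK$ is $4$-dimensional. You have forgotten that $\Stab(S_{3A_2})$ has connected component a $2$-dimensional torus $\TT^2\cong D''$ (\Cref{L:R3A2Norm2}(4)), so the quotient is a GIT quotient by a positive-dimensional group, not a finite quotient. Showing that the \emph{point} stabilizer in $\Stab(S_{3A_2})$ is $\mu_4$ only tells you the $\TT^2$-action is locally free there; it does not make the quotient map \'etale. And tangency of two invariant divisors in $\Bl_0\CC^6$ does not automatically descend to the $\TT^2$-quotient: in principle the tangency direction could lie along the orbit and be collapsed.

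The paper repairs exactly this gap by taking a \emph{second} Luna slice, now for the $\TT^2$-action on the chart $U_{\hzero}$ of $\Bl_0\CC^6$. Concretely, one sets $t_{\hone}=t_{\htwo}=1$ to obtain a $4$-dimensional slice with coordinates $(t_0,t_1,t_2,\alpha_{\hzero})$, on which the strict transform of $D_0$ reads $27t_0^2+4\alpha_{\hzero}=0$; the tangency with $\{\alpha_{\hzero}=0\}$ is then visible in the slice itself, which \emph{is} \'etale over $\MK$ modulo the residual finite stabilizer. Only after this does one check that the finite part of $\Stab(S_{3A_2})$ fixing a generic point of $\{t_0=\alpha_{\hzero}=0\}$ is the central $\mu_4$ (acting trivially), so the tangency descends. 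Your argument would be correct if you inserted this second slice step; as written, the \'etaleness claim is false and the descent is unjustified.
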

\begin{proof}[Outline of the proof]
The proof of this is by a detailed computation in local coordinates in charts of the blowup. To help the reader and the flow of the paper, we only summarize the key steps of the arguments here, postponing further details until \Cref{S:discMK}, where the proof will also benefit from building upon the explicit setup developed in the Appendix prior to that proof.

We first observe that the three $A_2$ singularities can be deformed independently. In the Luna slice the deformation space of each of the $A_2$ singularities is $\CC^2$, within which the discriminant divisor $D_{A_1}$ is a cuspidal curve. Thus altogether in the $\CC^6$ Luna slice near the $S_{3A_2}$ cubic surface, the discriminant divisor is the product of the three equations of cubics, in three disjoint pairs of coordinates,  one of which has the form $27\alpha_0^2+4\alpha_\hzero ^3=0$.

To determine the local structure of the Kirwan blowup, one considers the blowup $\Bl_0\CC^6$ of the origin in the Luna slice, and then studies the action of $\TT^2$ (the connected component of the stabilizer of $S_{3A_2}$ on this blowup). Identifying the explicit 4-dimensional Luna slice for this action, one writes down the equation of the discriminant divisor in this Luna slice explicitly, in charts on the projective space. In a suitable chart this discriminant divisor (that is, of $\widetilde D_{A_1}\subseteq \MK$) is locally a union of a number of hypersurfaces, one of which has the form $27t_0^2+4\alpha_\hzero$, where $\alpha_\hzero=0$ is the local equation of the exceptional divisor of the blowup, that is of $D_{3A_2}$. This intersection is manifestly non-transverse, except that  extra care is needed to take care of the finite part of the stabilizer. Indeed, in principle a quotient of a non-transverse intersection under a finite group may become transverse, and thus we need to ensure that the finite part of the stabilizer of $S_{3A_2}$ does not influence this (this is a local computation weaker than what is needed for the proof of \Cref{P:stab--ex}).
\end{proof}

\section{The ball quotient model and the first proof of \Cref{T:mainNonIso}}\label{sec:ballquoitient}
Allcock--Carlson--Toledo \cite{ACTsurf} have constructed a ball quotient model $\BG$ for the moduli of cubic surfaces. They proved that the Baily--Borel compactification $\BBG$ is isomorphic to the GIT model $\GIT$ and that under this identification the unique cusp of $\BBG$ corresponds to the GIT boundary point $\Delta_{3A_2}\in \GIT$. By the general theory, one has a toroidal compactification $\oBG$ of $\BG$, unique in this situation, which can be described as a blowup of $\BBG$ at the unique cusp. Similarly to the previous section, we study the intersection of the exceptional divisor $T_{3A_2}$ of the blowup $\oBG\to \BBG$ with the strict transform~$\widetilde D_n\subseteq \oBG$ of  the discriminant (Heegner) divisor $D_n\subseteq \BBG$. Here, in contrast with the Kirwan blowup where \Cref{P:discMK} gives non-transversality, we show in \Cref{P:discTor} that $T_{3A_2}$ and $\widetilde D_n$ meet generically transversally.
We thus obtain a first proof of \Cref{T:mainNonIso} that the isomorphism $ \GIT-\lbrace\Delta_{3A_2}\rbrace\cong \BG$ does not extend to an isomorphism of the compactifications $\MK$ and $\oBG$, despite both spaces being the blowup of the same point in $\GIT\cong\BBG$.

One key differentiating aspect of the ball quotient model (vs.~the GIT model) is the functorial behavior with respect to marking all the lines on the cubic surfaces (i.e., with respect to the natural $W(E_6)$ cover $\calM_m\to\calM$ ). This allows us to use Naruki's compactification $\overline{\calN}$ \cite{naruki}, which is a smooth normal crossing model for the marked moduli space, in order to understand the structure of $\oBG$, by applying the isomorphism $\overline{\calN}\cong \overline{\BG_m}$ previously established by \cite{GaKeSch}.
\subsection{Preliminaries on the ball quotient model}
We will now describe the compactifications of the ball quotient model of the moduli space of cubic surfaces. Before delving into the specifics for cubic surfaces, we first recall the compactification of ball quotients in general, referring to~\cite{AMRT} for the general details of the constructions. Let
$$
 \calB_n\coloneqq \{z\in \CC^{n}: \sum |z_i|^2 < 1\}
$$
be an $n$-dimensional ball. Alternatively, we can realize $\calB_n$ as follows. Let $\calO$ be the ring of integers of an imaginary quadratic field $\QQ(\sqrt{d})$, and let $\Lambda$ be a free $\calO$-module equipped with a hermitian form $h$ of signature $(1,n)$. Then
$$
\calB_n=\left\{ [z] \in \PP(\Lambda \otimes \CC): h(z) >0 \right\}\,.
$$

For an arithmetic subgroup $\Gamma\subseteq \SU(1,n)$, there is a quotient quasi-projective analytic space $\calB_n/\Gamma$, which by construction has at worst finite quotient singularities. This quotient admits a projective Baily--Borel compactification $(\calB_n/\Gamma)^*$ defined as the $\Proj$ of the ring of automorphic forms with respect to $\Gamma$. Geometrically, the boundary $(\calB_n/\Gamma)^*-\calB_n/\Gamma=c_{F_1}\sqcup \dots \sqcup c_{F_r}$ consists of a finite number of points, called cusps. These are in $1$-to-$1$ correspondence with the $\Gamma$-orbits of isotropic lines in $\Lambda_{\QQ(\sqrt{d})}$. There are no higher dimensional cusps since the signature is $(1,n)$, implying that no other isotropic
subspaces exist.

In the case of ball quotients there exists a unique toroidal compactification $\overline{\calB_n/\Gamma}$. Uniqueness follows since all tori involved have rank $1$. More precisely, after dividing by the unipotent radical of the parabolic subgroup that stabilizes a given cusp, the quotient locally looks like an open set in $D^* \times \CC^{n-1} \subseteq \CC^* \times \CC^{n-1}$ that contains $\{0\}\times \CC^{n-1}$ in its closure, where here $D^*$ is the punctured unit disk.
The toroidal compactification is then simply obtained by adding the divisor $\{0\}\times \CC^{n-1}$. As a result, the boundary $\overline {\calB_n/\Gamma}-\calB_n/\Gamma=T_{F_1}\sqcup \dots \sqcup T_{F_r}$ consists of a finite disjoint union of smooth (up to finite quotient singularities) irreducible divisors, each of which is in fact a finite quotient of an abelian variety. The natural map $p:\overline{\calB_n/\Gamma}\to(\calB_n/\Gamma)^*$ simply contracts each divisor~$T_{F_i}$ to the cusp~$c_{F_i}$ (see
 \Cref{P:MarkedTor} below for a detailed discussion of the case relevant in this paper).

\medskip
With this setup, we return to the case of cubic surfaces, and describe the period map to the ball quotient. Specifically, considering the triple cover of $\PP^3$ branched along a cubic surface, one obtains a cubic threefold, and via the period map for cubic threefolds, taking the $\ZZ_3$ action into account, one obtains a period map to a $4$-dimensional ball quotient, $\calM\to \BG$ (see~\cite{ACTsurf}). This is an open embedding, and the complement of the image is the Heegner divisor $D_n=\calD_n/\Gamma\subseteq \BG$. It turns out that in this case the rational period map $\GIT\dashrightarrow \BBG$ to the Baily--Borel compactification extends to an isomorphism, taking the discriminant divisor $D_{A_1}\subseteq \GIT$ to the (closure of the) Heegner divisor $D_n\subseteq \BBG$ (which is denoted this way for ``nodal"). Under this isomorphism, the unique strictly polystable point $\Delta_{3A_2}\in \GIT$ corresponding to the $3A_2$ cubic is identified with the sole cusp $c_{3A_2}=\partial\BBG$. The natural map $p:\oBG\to\BBG$ contracts the irreducible boundary divisor $T_{3A_2}$ to $c_{3A_2}$. From now on, we will write $D_{A_1}=D_n$ for the discriminant divisor, where we use $D_{A_1}$ when we are thinking of it from the GIT point of view, and $D_n$ when thinking of the ball quotient --- to emphasize the context we are in.

In summary, for the case of cubic surfaces we have a diagram
\begin{equation}\label{diag_unmarked}
\xymatrix{
\MK \ar_{\pi}[d]\ar@{-->}^f[r]&\oBG \ar^p[d]\\ \GIT \ar[r]^{\sim}&\BBG
}
\end{equation}
where $f$ is a birational map that restricts to an isomorphism
\begin{equation}\label{E:BoundCIso}
f:\MK -D_{3A_2} \cong \oBG - T_{3A_2}\,.
\end{equation}
In~\cite{kirwanhyp} and~\cite{ZhangCubic} the (intersection) Betti numbers of the spaces $\GIT\cong \BBG$ and $\MK$ were computed. In \cite[\S C.2]{cohcubics}, the Betti numbers of $\oBG$ were computed, and they turned out to be the same as for $\MK$, which served as motivation for our query as to whether these two compactifications are isomorphic, which is the main subject of the current paper.

\subsection{The toroidal compactification via marked cubic surfaces}\label{sec:marked}
An indispensable tool in the study of cubic surfaces is the group $W(E_6)$, which is the automorphism group for the configuration of the $27$ lines on a smooth cubic. The moduli space of cubic surfaces $\calM$ admits a natural~$W(E_6)$ cover~$\calM_m$ parameterizing marked smooth cubic surfaces, i.e., cubics together with a labeling of the $27$ lines.
Since the automorphism group of a smooth cubic surface acts faithfully on the primitive cohomology, it follows that $\calM_m$ is in fact smooth. Furthermore, Naruki \cite{naruki} constructed a smooth normal crossing compactification $\overline{\calN}$ of $\calM_m$, which admits various geometric interpretations (see e.g.,  \cite{HKT09}).  In this section, we use the geometry of the Naruki model $\overline{\calN}$ to get a good hold on the space of interest in our paper,  $\oBG$.

\medskip

By construction, the marked moduli space $\calM_m$ is a Galois cover of $\calM$ with Galois group $W(E_6)$. This cover is compatible with the ball quotient construction of Allcock--Carlson--Toledo \cite{ACTsurf}. Specifically, the monodromy group $\Gamma$ for cubic surfaces contains a normal subgroup $\Gamma_m\unlhd \Gamma$ with
\begin{equation}\label{eqgamma}
\Gamma/\Gamma_m\cong W(E_6) \times \{ \pm 1\}.
\end{equation}
(see \cite[(3.12)]{ACTsurf}).
We observe that $-1 $ acts trivially on the ball $\calB_4$. This leads to a $W(E_6)$ cover $\BG_m\to \BG$. Furthermore, this cover extends to the Baily--Borel compactifications (compare \cite[Thm. 3.17]{ACTsurf}), and then also to the toroidal compactifications --- essentially because in the ball quotient case, the toroidal compactification is canonical, and thus there is an automatic extension.  In summary, the following holds:
\begin{pro}\label{Prop-compatible}
With notation as above, we have the following diagram:
\smallskip
\begin{equation}\label{we6diag}
\xymatrix{
\calM_m \ar@{->>}[d]\ar@{^{(}->}[r]&\BG_m \ar@{->>}[d]\ar@{^{(}->}[r]\ar@/^1.3pc/@{^{(}->}[rr]&(\BG_m)^*\ar@{->>}[d]&\overline{\BG_m}\ar[l]\ar@{->>}[d]\\ \calM\ar@{^{(}->}[r]&\BG\ar@{^{(}->}[r]\ar@{^{(}->}[r]\ar@/^1.3pc/@{^{(}->}[rr]&(\BG)^*&\oBG\ar[l]
}
\end{equation}
where all the spaces in the top row admit a $W(E_6)$ action, and the morphisms are $W(E_6)$-equivariant, while the spaces in the bottom row are the quotients with respect to this $W(E_6)$ action. \qed
\end{pro}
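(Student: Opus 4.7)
The plan is to build up the compatibility across each layer of the diagram, starting from the group-theoretic input and propagating it through the compactifications.

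First, I would establish the $W(E_6)$-cover at the level of open ball quotients. Since $\Gamma_m\unlhd\Gamma$ with quotient $W(E_6)\times\{\pm 1\}$ by~\eqref{eqgamma}, and since $-1\in\SU(1,4)$ acts trivially on $\calB_4$ (it scales the homogeneous coordinates of a point in projective space), the induced effective action on $\BGm=\calB_4/\Gamma_m$ is through $W(E_6)$, with quotient $\BGm/W(E_6)=\BG$. The marked moduli space $\calM_m$ sits inside $\BGm$ as the preimage of $\calM\subseteq\BG$ under this cover, and the identification of this preimage with the moduli space of cubic surfaces together with a labeling of the $27$ lines is the content of~\cite[Thm.~3.17]{ACTsurf}, which characterizes $\Gamma_m$ as the kernel of the natural action on the configuration of lines.

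Second, I would extend the $W(E_6)$-action to the Baily--Borel compactifications. Using the description $\BBGm=\Proj R(\Gamma_m)$ and $\BBG=\Proj R(\Gamma)$, where $R(\Gamma_?)$ denotes the graded ring of $\Gamma_?$-automorphic forms, the normality $\Gamma_m\unlhd\Gamma$ yields a natural $\Gamma/\Gamma_m$-action on $R(\Gamma_m)$ whose invariant subring is $R(\Gamma)$. This descends to the desired action on $\BBGm$ with quotient $\BBG$, and the open inclusion $\BGm\hookrightarrow\BBGm$ is manifestly equivariant.

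Finally, I would lift the action to the toroidal level. The key input here is the uniqueness of the toroidal compactification for ball quotients discussed in the preamble to this proposition: each cusp contributes a unique $1$-dimensional fan, so the toroidal datum is canonically determined by the Baily--Borel data. Hence any automorphism of $\BBGm$ lifts canonically to $\oBGm$; in particular the $W(E_6)$-action does, permuting the finitely many cusps of $\BBGm$ that lie over the unique $\Gamma$-cusp $c_{3A_2}\in\BBG$. The quotient $\oBGm/W(E_6)$ is then a normal projective compactification of $\BG$ dominating $\BBG$ with a canonical toroidal local model around $c_{3A_2}$, so by the same uniqueness it coincides with $\oBG$, yielding the full commutative diagram. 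The one point that requires verification, which I expect to be the main (though modest) technical obstacle, is that the $\Gamma/\Gamma_m$-action on the set of $\Gamma_m$-orbits of isotropic lines in $\Lambda_{\QQ(\sqrt d)}$ is compatible with the local toroidal/parabolic structure at each cusp; this follows from the fact that conjugation by representatives of $\Gamma/\Gamma_m$ permutes the corresponding parabolic stabilizers and their unipotent radicals, so the lift to $\oBGm$ is automatic from the Baily--Borel equivariance already established.
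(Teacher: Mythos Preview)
Your proposal is correct and follows essentially the same approach as the paper, which in fact gives no formal proof beyond the discussion preceding the proposition: the normality $\Gamma_m\unlhd\Gamma$ with quotient $W(E_6)\times\{\pm 1\}$ and the triviality of $-1$ on $\calB_4$ yield the cover at the ball quotient level, functoriality of Baily--Borel extends it, and the canonicity of the toroidal compactification for ball quotients gives the final step. Your write-up simply fleshes out these three steps with more detail (the $\Proj$ description for Baily--Borel, the parabolic compatibility for toroidal) than the paper records.
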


\begin{rem}
The GIT construction does not admit a natural $W(E_6)$ cover (e.g., it involves taking a quotient by $\mathrm{PGL}(4,\CC)$, which has no natural connection to $W(E_6)$).
\end{rem}

\begin{rem}\label{rem:cubics with a line}
We note that there is another very natural moduli space $\calM_\ell$ parameterizing smooth cubics together with a chosen line. It is a degree 27 non-Galois cover $\calM_\ell\to\calM$, which is in turn covered via $\calM_m\to\calM_\ell$. This moduli of cubics with a line is of particular interest as it has a model as a Deligne--Mostow moduli space $DM(2^5,1^2)$ of points on a line. As such, $\calM_\ell$ has both a GIT compactification, and the corresponding Kirwan blowup, as of a configuration of points, and a toroidal compactification covering $\oBG$. It seems likely that our methods from this and previous works would make it possible to determine whether the corresponding Kirwan blowup and toroidal compactification are naturally isomorphic, but we will not pursue it here.
\end{rem}
It was shown recently  that the marked toroidal and Naruki compactifications coincide.
\begin{teo}[{\cite{GaKeSch}}]\label{ThmGallardo}
The Naruki compactification $\overline{\calN}$ is isomorphic to the toroidal compactification $\oBGm$.  More precisely, there is a $W(E_6)$-equivariant commutative diagram
$$\xymatrix{\calM_m \ar@{^{(}->}[r]\ar@{^{(}->}[d]&\BG_m\ar@{^{(}->}[d]\\
{\overline\calN}\ar[r]^{\sim}&\oBGm}
$$
\end{teo}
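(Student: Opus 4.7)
The plan is to extend the period isomorphism $\calM_m \xrightarrow{\sim} \BGm$ to a morphism $\overline{\calN} \to \oBGm$ and then show this extension is an isomorphism. For the first step, I would invoke the Borel/Ash--Mumford--Rapoport--Tai extension theorem for period maps into toroidal compactifications of ball quotients: since $\overline{\calN}$ is smooth with simple normal crossing boundary, and the monodromies of the relevant variation of Hodge structure on $\calM_m$ (pulled back from cubic threefolds via the triple-cover construction of \cite{ACTsurf}) around each boundary divisor of $\overline{\calN}$ are unipotent on the marked cover, the period map extends canonically to a $W(E_6)$-equivariant morphism $\varphi : \overline{\calN} \to \oBGm$. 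Passing from $\Gamma$ to $\Gamma_m$ is what kills the finite reflection part of the local monodromy around the discriminant, leaving only a single unipotent generator per boundary divisor; this is precisely what is required so that the extension lands in $\oBGm$ rather than only in $\BBGm$.

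Next, I would verify that $\varphi$ is bijective on points by comparing boundary strata. The boundary of $\oBGm$ is a disjoint union of irreducible divisors indexed by the cusps of $\BBGm$; pulling back the unique cusp of $\BBG$ via the $W(E_6)\times\{\pm 1\}$-cover from \Cref{Prop-compatible} and using that $-1$ acts trivially on the ball together with~\eqref{eqgamma}, these cusps are in bijection with $W(E_6)$-orbits of $3A_2$-subsystems of $E_6$, yielding exactly $40$ of them. Each boundary divisor of $\oBGm$ is a finite quotient of an abelian surface. On the Naruki side, $\overline{\calN}$ has precisely $40$ irreducible boundary divisors, permuted transitively by $W(E_6)$, each an abelian surface. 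Since $\varphi$ is $W(E_6)$-equivariant and birational, it sends boundary to boundary, inducing a bijection; a local analysis of the toroidal coordinates at one cusp, propagated by $W(E_6)$, then shows that each induced map on an abelian surface fibre is an isomorphism.

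Finally, $\varphi$ is a birational morphism between smooth proper varieties which is bijective on points, hence an isomorphism by Zariski's Main Theorem; compatibility with the open immersions of $\calM_m$ into either side is automatic from the construction. The hard part will be the rigorous justification of the first step---establishing that the extended period map lands in $\oBGm$ rather than merely in $\BBGm$---which hinges on the unipotency of the local monodromies of the polarized VHS on $\calM_m$ around the Naruki boundary divisors. This is precisely where the marking is essential and where Naruki's resolution is tailored to the VHS: only after passing to the marked cover does a single unipotent Picard--Lefschetz-type transformation govern the monodromy at the generic point of each boundary divisor, matching the rank-one torus orbit in the canonical toroidal construction for ball quotients.
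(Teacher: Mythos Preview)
Your overall strategy matches the paper's short argument in \Cref{rem-gks}: use the simple normal crossing property of the boundary of $\overline{\calN}$ to extend the period map via the AMRT/Borel extension theorem, then compare the $40$ boundary components on each side. A few points where you diverge or slip:

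\textbf{The final step.} After obtaining the morphism $\varphi:\overline{\calN}\to\oBGm$, the paper's argument is purely birational-geometric and avoids any local computation: since both sides have $40$ disjoint irreducible boundary divisors, $\varphi$ cannot be a divisorial contraction; since the target has only finite quotient singularities and is therefore $\QQ$-factorial, $\varphi$ cannot be a small contraction either; hence $\varphi$ is an isomorphism. Your route via ``local analysis of the toroidal coordinates at one cusp'' to check that each boundary fibre maps isomorphically is not wrong in principle, but you do not actually carry it out, and it is unnecessary. The paper's argument is both shorter and more robust.

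\textbf{Dimensional slip.} The ball quotient is four-dimensional, so the toroidal boundary divisors are three-dimensional; they are finite quotients of abelian \emph{threefolds}, not abelian surfaces. In this specific case each component is $(\PP^1)^{\times 3}$ (\Cref{P:MarkedTor}), which is indeed a quotient of a product of three elliptic curves.

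\textbf{Smoothness of the target.} Your invocation of Zariski's Main Theorem assumes $\oBGm$ is smooth, but a priori it only has finite quotient singularities; its smoothness is in fact part of what one is trying to establish. Normality of $\oBGm$ suffices for the ZMT step (a proper bijective birational morphism onto a normal variety is an isomorphism), so this is easily repaired, but as written it is circular.

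\textbf{Unipotency.} You place significant emphasis on verifying unipotency of the local monodromies so that the extension lands in $\oBGm$ rather than $\BBGm$. For ball quotients the toroidal compactification is canonical, and the AMRT extension theorem only requires that the source compactification have normal crossing boundary (monodromy is automatically quasi-unipotent). The paper simply cites \cite{AMRT} once the SNC property of $\overline{\calN}$ is known; the unipotency discussion, while not incorrect in spirit, is not the crux.
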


\begin{notation}
In what follows, we use  freely the identification given by \Cref{ThmGallardo}. Motivated by the
compatibility given by diagram \eqref{we6diag}, we will use for $\BG_m$ and its compactifications the same notation as for $\BG$, simply adding the subscript~$m$. In particular, we will consider the divisors $\widetilde D_{n,m}$ and $T_{3A_2,m}$ on $\oBGm$. We will informally refer to $\widetilde D_n$ (and $\widetilde D_{n,m}$) as the nodal divisor, to $\widetilde R$ (and $\widetilde R_m$ on $\oBGm$) as the Eckardt divisor, and to $T_{3A_2}$ (and $T_{3A_2,m}$) as the (toroidal) boundary divisor. Note that while $\widetilde D_n$ and $\widetilde R$ are irreducible divisors in $\oBG$, the corresponding divisors in the marked case have several irreducible components transitively permuted by the natural $W(E_6)$ action.
\end{notation}

The Naruki compactification has a well-understood structure, mostly due to Naruki \cite{naruki}, with some further later clarifications by other authors.

\begin{teo}[Naruki]\label{P:MarkedTor} The spaces and maps above have the following descriptions:
\begin{enumerate}
\item[(0)] $\oBGm(\cong \overline{\calN})$ is smooth, and the complement of the locus $\calM_m$ of smooth marked cubic surfaces is the simple normal crossing divisor $T_{3A_2,m}\cup \widetilde D_{n,m}$.

\item
\begin{itemize}
\item[a)] The Baily--Borel compactification $\BBGm$  has $40$ cusps, permuted transitively by the $W(E_6)$ action, each of which lies over the unique cusp $c_{3A_2}\in\BBG$.  Near each of the $40$ cusps, $\BBGm$  is locally isomorphic to the cone over $(\PP^1)^{\times3}$ embedded in $\PP^7$ via $\calO(1,1,1)$.
\item[b)] The boundary divisor $T_{3A_2,m}\subseteq\oBGm$ has $40$ disjoint irreducible components, each isomorphic to $(\PP^1)^{\times 3}$. The deck transformation group $W(E_6)$ of the cover $\oBGm\to\oBG$ acts transitively on the set of these irreducible components.
\end{itemize}

\item
\begin{itemize}
\item[a)] The $W(E_6)$ cover $\BBGm\to \BBG$ is branched along the discriminant divisor~$D_{A_1}$ and the Eckardt divisor~$R$, with ramification index $2$ along each.
\item[b)] The $W(E_6)$cover $\oBGm\to\oBG$ is generically \'etale along the toroidal boundary divisors $T_{3A_2,m}$.
\end{itemize}

\item The stabilizer $S_{3A_2,m}\subseteq W(E_6)$ of an irreducible component of the divisor $T_{3A_2,m}$ fits in an extension
$$1\to (S_3)^{\times 3}\to S_{3A_2,m}\to S_3\to 1\,.$$
Under the identification of the irreducible component with $(\PP^1)^{\times 3}$, the stabilizer $S_{3A_2,m}$ acts as follows: the normal subgroup $(S_3)^{\times 3}$ acts diagonally on $(\PP^1)^{\times 3}$, while the residual $S_3$ acts on the quotient $(\PP^1)^{\times 3}/(S_3)^{\times 3}\cong (\PP(2,3))^{\times 3}\cong (\PP^1)^{\times 3}$ by permuting the factors.
\end{enumerate}
\end{teo}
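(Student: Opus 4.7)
The plan is to derive all four claims from Naruki's explicit model $\overline{\calN}$ of the marked moduli space, transported to $\oBGm$ via Theorem~\ref{ThmGallardo}, supplemented by orbit-stabilizer arguments for the $W(E_6)$-action on the $3A_2$ root subsystems of $E_6$. In particular, (0) and the disjointness and $(\PP^1)^{\times 3}$ description in (1)(b) can be read directly from Naruki's construction of $\overline{\calN}$ as an iterated blow-up of a smooth $4$-fold, where the two types of boundary divisors --- the exceptional divisors over the cusps and the strict transforms of the nodal divisor --- meet with simple normal crossings.

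The combinatorial backbone of (1)(b) and (3) is the identity $|W(E_6)|/(|S_3|^3\cdot |S_3|)=51840/1296=40$. One should put the forty components of $T_{3A_2,m}$ in bijection with the $W(E_6)$-orbit of $A_2\oplus A_2\oplus A_2\subseteq E_6$ root subsystems, which is known to form a single orbit with stabilizer $(S_3)^{\times 3}\rtimes S_3$, where $(S_3)^{\times 3}=\prod W(A_2)$ acts componentwise and the residual $S_3$ permutes the three summands; this gives the extension in (3). The description of the action on $(\PP^1)^{\times 3}$ then follows by identifying each $\PP^1$ factor with the parameter of the corresponding $A_2$ degeneration, on which the associated $W(A_2)=S_3$ acts by its standard order-$6$ action with quotient $\PP(2,3)\cong \PP^1$.

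For (1)(a) I would apply the general AMRT recipe for ball quotients to the toroidal contraction $p_m:\oBGm\to\BBGm$, which collapses each $(\PP^1)^{\times 3}$ component to a cusp. Computing the restriction of the Baily--Borel polarization to such a component yields $\calO(1,1,1)$ (with the three factors coming from the three independent $A_2$ degenerations), so Stein factorization identifies $\BBGm$ locally near a cusp with the affine cone over the Segre embedding by $\calO(1,1,1)$. For (2)(a) one computes the stabilizer in $\Gamma/\Gamma_m\cong W(E_6)\times\{\pm 1\}$ of a generic point of the two divisors of interest: a Picard--Lefschetz reflection for the nodal divisor, and the Eckardt involution for $R$; each contributes a single $\ZZ/2$ (with $-1$ acting trivially on the ball), giving ramification index~$2$ in both cases, and agreeing with Remark~\ref{rem:Eckardt}. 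For (2)(b), generic \'etaleness on $T_{3A_2,m}$ is equivalent to the stabilizer $S_{3A_2,m}$ acting generically freely on each $(\PP^1)^{\times 3}$, which follows from the explicit action in (3) since $W(A_2)$ acts freely on a Zariski open subset of $\PP^1$.

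The main obstacle is (1)(a): one must identify the polarization on a boundary component precisely as $\calO(1,1,1)$, which involves tracking the Baily--Borel line bundle through the toroidal blow-up. The cleanest approach is to carry out the standard AMRT local computation at a Heisenberg parabolic stabilizing the cusp, though one may alternatively appeal to Naruki's original paper or to~\cite{GaKeSch}, where this cone structure is verified explicitly. Everything else is essentially bookkeeping once Naruki's model is in hand and Theorem~\ref{ThmGallardo} is invoked.
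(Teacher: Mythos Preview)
Your proposal is correct and follows essentially the same approach as the paper: both derive (0), (1b), (2b), and (3) from Naruki's explicit description of $\overline{\calN}$ (transported via Theorem~\ref{ThmGallardo}), both invoke the orbit--stabilizer count for $3A_2\subset E_6$ to explain the number $40$ and the structure of $S_{3A_2,m}$, and both argue (2a) via Picard--Lefschetz reflections for the nodal divisor and the Eckardt involution for $R$.

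The one genuine difference is your treatment of (1a). You flag as ``the main obstacle'' the identification of the Baily--Borel polarization on a boundary component as $\calO(1,1,1)$, and propose to carry out the AMRT local computation at a Heisenberg parabolic. The paper sidesteps this entirely: Naruki \cite[\S11]{naruki} already constructed, by hand, a contraction $\overline{\calN}\to\calN^*$ collapsing each $(\PP^1)^{\times 3}$ to a cone point of exactly the claimed type, and \cite[\S2.9]{DvGK} identifies $\calN^*$ with $\BBGm$. So there is no need to track the polarization through the toroidal blow-up; the cone structure is a direct citation. Your alternative (``one may alternatively appeal to Naruki's original paper'') is in fact what the paper does, and is the simpler route.
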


\begin{proof}
Item (0) is the main result of Naruki \cite[Thm.~1]{naruki}. Naruki also proved
that the toroidal boundary $T_{3A_2,m}$ consists of $40$ irreducible components (permuted transitively by $W(E_6)$), each isomorphic to $(\PP^1)^{\times 3}$. The stabilizer of a boundary component and its action are discussed in \cite[p.22]{naruki}. In particular, items (1b), (2b), and (3) follow. As an aside, we note that each of the $40$ components corresponds to a choice of embedding of the $A_2\times A_2\times A_2$ lattice into the $E_6$ lattice, and that intrinsically $S_{3A_2,m}$ is the normalizer $N_{W(E_6)}(3A_2)$ of such an embedding (recall that $N_{W(E_6)}(3A_2)$ is the unique, up to conjugacy, index $40$ subgroup of $W(E_6)$; compare \cite[Table 9]{Carter}).

The ramification statement (2a) is clear for geometric reasons: the branch divisor consists of the nodal locus (the Picard--Lefschetz transformations act as reflections in $W(E_6)$), and the locus of smooth cubics with extra automorphisms, which coincides with the Eckardt divisor. While the occurrence of nodal degenerations is quite general, the presence of the Eckardt component is special to cubic surfaces: it is rare for the locus of objects with extra automorphisms to form a divisor in the moduli space, and secondly it reflects the fact that the automorphism group of a cubic surface  embeds into $W(E_6)$. The ramification statement
is also worked out in detail (from the ball quotient perspective) in \cite[Thms.~2.14 and 7.26] {ACTsurf} for the nodal locus, and \cite[Sec.~11 and Lem.~11.4]{ACTsurf}
for the Eckardt locus. For the interested reader, we point out that these two types of ramification are associated, respectively, to short and long roots in the Eisenstein lattice used to construct the ball quotient model of \cite{ACTsurf}.

Finally, it remains to discuss the structure at the boundary of the  Baily--Borel compactification and the relation to the toroidal compactification. First, the Baily--Borel compactification is a contraction of the toroidal boundary $T_{3A_2,m}$ in $\oBGm(\cong \overline{\calN})$. Thus, (1b) implies that $\BBGm$ has $40$ cusps (see also \cite[\S8.16]{DvGK}, where this fact is proved without reference to the toroidal compactification).
Since $(\BG)^*$ has a unique cusp, and the two Baily--Borel compactifications are compatible as in \eqref{we6diag}, the first part of (1a) follows.   It remains to determine the local structure near the cusps of $\BBGm$.
Naruki \cite[\S11]{naruki} constructed a contraction $ \overline{\calN}\to \calN^*$ of the $40$ irreducible components of $T_{3A_2,m}$ to $40$ singularities of the type described in (1a). It was then noted in \cite[\S2.9]{DvGK} that  $\calN^*$ coincides with $\BBGm$; this completes the proof.
\end{proof}

\begin{rem}\label{rem-gks}
The paper \cite{GaKeSch} proves a stronger statement than \Cref{ThmGallardo}, namely that both $\overline{\mathcal N}$ and $\oBGm$ are 
isomorphic to an appropriate Hassett moduli space of weighted stable rational curves. 
However, we note that in fact, Naruki's results \cite{naruki} regarding the Naruki compactification $\overline {\mathcal N}$ as explained in \Cref{P:MarkedTor} are enough to provide a short proof of the result of \cite{GaKeSch} stated in  \Cref{ThmGallardo}. Indeed, as explained in the proof of \Cref{P:MarkedTor}(0) and (1)b), the union of the discriminant and Naruki boundary in $\overline{\mathcal N}$ is a simple normal crossing divisor, and so the rational map $\overline{\mathcal N}\dashrightarrow \oBGm$ extends to a morphism \cite{AMRT}.  
From \cite[Thm.~3.17]{ACTsurf} the morphism is an  isomorphism over the generic points of the discriminant divisor in $\overline {\mathcal N}$, mapping the discriminant in $\overline{\mathcal N}$  to the discriminant in $\oBGm$.  
By \cite[Prop.~2.1]{AllFrei} we know the Satake compactification  $\BBGm$ has $40$ cusps and thus the toroidal boundary consists of $40$ disjoint irreducible components. 
Since Naruki has shown that the Naruki boundary also consists of $40$ disjoint irreducible components (see \Cref{P:MarkedTor}(1)b)), the morphism $\overline{\mathcal N}\to \oBGm$ cannot be a divisorial contraction.  At the same time, since the target is $\mathbb Q$-factorial, the morphism cannot be a small contraction.  Hence it is an isomorphism.
Note that the reason this type of argument  fails in the unmarked case, i.e., the reason this type of argument does not imply that ${\MK}$ and $\oBG$ are isomorphic,  is that we can not apply the Borel extension theorem since ${\MK}$ does not have a  normal crossing boundary (cf.~\Cref{P:discMK}).
\end{rem}

With these preliminaries, we can extract a series of immediate consequences on the boundary of $\calM\subseteq \oBG$. First, \Cref{Prop-compatible} and \Cref{P:MarkedTor}(0) together show that this is a normal crossing compactification in a stack sense:
\begin{cor}\label{C:oBGNCb}
The boundary $\widetilde D_n\cup T_{3A_2}$ in $\oBG$ is a normal crossings divisor, up to finite quotients. \qed
\end{cor}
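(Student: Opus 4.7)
The plan is to deduce this directly from the marked cover picture, using that the marked model has been fully described by Naruki. By \Cref{Prop-compatible}, the morphism $\oBGm \to \oBG$ is a finite quotient by $W(E_6)$, compatible with the inclusions $\calM_m\hookrightarrow \oBGm$ and $\calM\hookrightarrow \oBG$. Consequently, the set-theoretic preimage of $\widetilde D_n \cup T_{3A_2}$ in $\oBGm$ is exactly the union $\widetilde D_{n,m}\cup T_{3A_2,m}$ of the marked nodal and toroidal boundary divisors. By \Cref{P:MarkedTor}(0), this latter union is a simple normal crossing divisor on the smooth variety $\oBGm$.

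Given a point $x\in \oBG$, I would choose a preimage $\tilde x \in \oBGm$ and let $H_x \subseteq W(E_6)$ be its (finite) stabilizer. Since $W(E_6)$ acts on $\oBGm$ and permutes the irreducible components of $\widetilde D_{n,m}$ and of $T_{3A_2,m}$ (see \Cref{P:MarkedTor}(1b) for the toroidal boundary), the subgroup $H_x$ preserves the SNC boundary in a neighborhood of $\tilde x$. Taking an $H_x$-invariant analytic neighborhood $U$ of $\tilde x$, the quotient $U/H_x$ is an analytic neighborhood of $x$ in $\oBG$, and by construction the boundary in this neighborhood is the image of the SNC divisor $(\widetilde D_{n,m}\cup T_{3A_2,m})\cap U$ under the finite quotient map $U \to U/H_x$. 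This realizes $\widetilde D_n \cup T_{3A_2}$ locally as the quotient of an SNC divisor on a smooth variety by a finite group action, which is precisely the meaning of \emph{normal crossings up to finite quotients}.

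There is no substantive obstacle here: the statement is a direct structural consequence of \Cref{ThmGallardo} and \Cref{P:MarkedTor}(0), together with the fact that the $W(E_6)$-cover identifies $\oBG$ as a finite quotient of the smooth NC-boundary space $\oBGm$. The only point that requires a brief verification is that the preimage of each boundary divisor on $\oBG$ coincides with the correspondingly marked divisor on $\oBGm$, which is immediate from the compatibility of diagram \eqref{we6diag} with the moduli interpretations.
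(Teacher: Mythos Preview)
Your proposal is correct and follows essentially the same approach as the paper: the paper simply notes that \Cref{Prop-compatible} and \Cref{P:MarkedTor}(0) together give the result, and your argument spells out exactly this deduction in more detail (identifying the preimage of the boundary and passing to local stabilizers).
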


For further reference, we also record the structure of the toroidal boundary divisor:
\begin{cor}\label{C:T3A2}
The toroidal boundary divisor $T_{3A_2}\subseteq \oBG$ is isomorphic to $\PP^3$.
\end{cor}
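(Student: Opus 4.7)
The plan is to realize $T_{3A_2}$ as a finite quotient of a component of $T_{3A_2,m}$ and then carry out this quotient in two stages using the structure of the stabilizer described in \Cref{P:MarkedTor}(3).

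First, by \Cref{Prop-compatible} we have $\oBG = \oBGm/W(E_6)$, so $T_{3A_2}$ is the image of $T_{3A_2,m}$ under this finite quotient. By \Cref{P:MarkedTor}(1b) the divisor $T_{3A_2,m}$ has $40$ disjoint irreducible components, each isomorphic to $(\PP^1)^{\times 3}$, and $W(E_6)$ permutes them transitively with stabilizer $S_{3A_2,m}$. Fixing a component~$T_0\cong (\PP^1)^{\times 3}$, the induced map $T_0\to T_{3A_2}$ is therefore the geometric quotient by $S_{3A_2,m}$, so I would show
\[
T_{3A_2}\cong (\PP^1)^{\times 3}/S_{3A_2,m}.
\]

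Next, I would compute this quotient in two steps using the extension $1\to (S_3)^{\times 3}\to S_{3A_2,m}\to S_3\to 1$ from \Cref{P:MarkedTor}(3). The normal subgroup $(S_3)^{\times 3}$ acts factorwise, and on each $\PP^1$ factor the $S_3$-quotient is $\PP^1/S_3\cong \PP(2,3)\cong \PP^1$ (this is the usual identification coming from the $j$-invariant, or from noting that $S_3$ acts on $\PP^1$ through a faithful action with three branch orbits of sizes $2,3,6$). Thus the first quotient gives
\[
(\PP^1)^{\times 3}/(S_3)^{\times 3}\cong (\PP^1)^{\times 3},
\]
with the residual $S_3$ permuting the three factors. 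Finally,
\[
T_{3A_2}\cong (\PP^1)^{\times 3}/S_3 \cong \Sym^3(\PP^1)\cong \PP^3,
\]
using the classical identification of $\Sym^3(\PP^1)$ with $\PP H^0(\PP^1,\calO(3))\cong \PP^3$ given by sending an unordered triple of points to the binary cubic form vanishing on it.

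The only step requiring any verification beyond citing \Cref{P:MarkedTor} is checking that the induced $S_3$-action on the intermediate $(\PP^1)^{\times 3}$ is still just the permutation of factors; but this is automatic from the splitting of the action described in \Cref{P:MarkedTor}(3), because the residual $S_3$ is stated there to act on $(\PP(2,3))^{\times 3}\cong(\PP^1)^{\times 3}$ by permuting factors. I do not anticipate a serious obstacle: the main content is organizational, namely identifying the quotient with $T_{3A_2}$ and then recognizing $\Sym^3(\PP^1)\cong \PP^3$.
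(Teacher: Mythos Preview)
Your proposal is correct and follows essentially the same approach as the paper: identify $T_{3A_2}$ with $(\PP^1)^{\times 3}/S_{3A_2,m}$ via \Cref{P:MarkedTor}, take the quotient in two steps using the extension in \Cref{P:MarkedTor}(3), and recognize the result as $\Sym^3\PP^1\cong\PP^3$. The paper's proof is simply the one-line version of your argument.
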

\begin{proof}
The boundary divisor $T_{3A_2}$ is the quotient of a fixed component of $T_{3A_2,m}$ by the relevant stabilizer group. Using \Cref{P:MarkedTor}, we get $T_{3A_2}\cong (\PP^1)^{\times 3}/S_{3A_2,m}\cong \Sym^3\PP^1\cong\PP^3$.
\end{proof}

\subsection{Proof of \Cref{T:mainNonIso}}
At this point, we are able to establish one of our main results, namely that the period map does not extend to an isomorphism between the Kirwan compactification and the toroidal compactification of the ball quotient model. We have seen by \Cref{P:discMK} that the nodal and boundary divisors do not meet transversally, even generically, in the Kirwan model, while an immediate consequence of the above discussion is that they do so in the toroidal model:

\begin{pro}\label{P:discTor}
The discriminant $\widetilde D_n$ and boundary $T_{3A_2}$ divisors in $\oBG$ meet generically transversally along an irreducible surface.
\end{pro}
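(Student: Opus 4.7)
The plan is to pass to the marked toroidal compactification $\oBGm$, where Naruki's results (\Cref{P:MarkedTor}) give a simple normal crossing structure, and then descend to $\oBG$ via the $W(E_6)$-cover. First, \Cref{P:MarkedTor}(0) says that $\widetilde D_{n,m}\cup T_{3A_2,m}$ is a simple normal crossing divisor on the smooth fourfold $\oBGm$, so upstairs the two divisors meet transversally along a smooth pure $2$-dimensional subvariety. To transfer this to $\oBG$, I would use the ramification data of \Cref{P:MarkedTor}(2): the cover $\oBGm\to \oBG$ is generically \'etale along $T_{3A_2,m}$ and ramifies with order $2$ along $\widetilde D_{n,m}$. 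Hence, at a generic point of $\widetilde D_n\cap T_{3A_2}$, local coordinates on $\oBGm$ may be chosen as $(t_1,t_2,t_3,t_4)$ with $\widetilde D_{n,m}=\{t_1=0\}$ and $T_{3A_2,m}=\{t_2=0\}$, and with local stabilizer the reflection $t_1\mapsto -t_1$ (trivial on $t_2,t_3,t_4$). The quotient has local coordinates $(t_1^2,t_2,t_3,t_4)$, and the reduced images of $\widetilde D_{n,m}$ and $T_{3A_2,m}$ are $\{t_1^2=0\}$ and $\{t_2=0\}$, cut out by independent equations; this gives generic transversality and also shows that the intersection has pure dimension $2$.

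For irreducibility, I would fix a single component $T\cong (\PP^1)^{\times 3}$ of the $40$ irreducible components of $T_{3A_2,m}$ (\Cref{P:MarkedTor}(1b)). The intersection $\widetilde D_{n,m}\cap T$ is a divisor in $T$, invariant under the stabilizer $S_{3A_2,m}$ whose action on $(\PP^1)^{\times 3}$ is described in \Cref{P:MarkedTor}(3): the normal subgroup $(S_3)^{\times 3}$ acts diagonally, while the residual $S_3$ permutes the three factors. Using the explicit structure of Naruki's compactification near a boundary component --- either directly from \cite{naruki} or via the Hassett description of $\oBGm$ from \cite{GaKeSch} --- one identifies $\widetilde D_{n,m}\cap T$ as a union of three components, each of the form $\{p_i\in O\}$ for a fixed $S_3$-orbit $O\subset \PP^1$ and $i=1,2,3$, permuted transitively by the residual $S_3$. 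Their image in $T_{3A_2}\cong \PP^3$ is therefore a single irreducible divisor. Since $W(E_6)$ acts transitively on the $40$ components of $T_{3A_2,m}$ and each contributes the same image in $T_{3A_2}$, it follows that $\widetilde D_n\cap T_{3A_2}$ is an irreducible surface.

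The main obstacle is the explicit identification of $\widetilde D_{n,m}\cap T$ inside one boundary component $T\cong (\PP^1)^{\times 3}$: this is a concrete but somewhat delicate calculation in Naruki's coordinates (or through the Hassett description), requiring bookkeeping about how the discriminant restricts to the toroidal boundary and how its components are permuted by the $S_{3A_2,m}$-action. Once this is settled, both the transversality and the irreducibility follow from the strategy above, completing the proof of \Cref{P:discTor}.
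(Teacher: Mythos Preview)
Your approach is essentially the paper's: pass to the marked cover $\oBGm$, use the simple normal crossing structure from \Cref{P:MarkedTor}(0), and descend via the $W(E_6)$-quotient. Two points need tightening.

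First, your count of the components of $\widetilde D_{n,m}\cap T$ is off. What you write as $\{p_i\in O\}$ for a single $S_3$-orbit $O\subset\PP^1$ is not irreducible; in fact the paper (citing \cite[Prop.~11.2']{naruki}) identifies \emph{nine} irreducible components, namely $\{0\}\times(\PP^1)^{\times 2}$, $\{1\}\times(\PP^1)^{\times 2}$, $\{\infty\}\times(\PP^1)^{\times 2}$ and their images under the residual $S_3$ permuting the three $\PP^1$ factors. The irreducibility downstairs then follows because the full stabilizer $S_{3A_2,m}$ (using both the $(S_3)^{\times 3}$ factor, which permutes $\{0,1,\infty\}$ in each $\PP^1$, and the residual $S_3$) acts transitively on all nine. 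Your residual $S_3$ alone only gives three orbits of three.

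Second, for the transversality step you assert that the local stabilizer at a generic point of the intersection is the single reflection $t_1\mapsto -t_1$. The ramification data from \Cref{P:MarkedTor}(2) only tells you the stabilizer at a generic point of $\widetilde D_{n,m}$, not at a generic point of $\widetilde D_{n,m}\cap T_{3A_2,m}$, where \emph{a priori} it could jump. The paper checks this directly: at $P=(0,*_1,*_2)\in T$ with $*_1,*_2\notin\{0,1,\infty\}$ and $*_1\neq *_2$, the only nontrivial element of $S_{3A_2,m}$ fixing $P$ is the involution in the first $S_3$ factor that fixes $0$ and swaps $1,\infty$; one then verifies this reflection fixes the relevant component of $\widetilde D_{n,m}$ pointwise and preserves $T$ setwise, so the quotient coordinates are as you describe. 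Once these two points are filled in, your argument is the paper's.
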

\begin{proof}

This follows easily from the geometry explicitly described in \Cref{P:MarkedTor} for the marked case.
We first observe that the intersection of $\widetilde D_n$ and $T_{3A_2}$ is irreducible and hence it will be enough to consider a
generic point $P$ of some component of the intersection $ \widetilde D_{n,m}\cap T_{3A_2,m}$.
To see this we  recall that, by \Cref{P:MarkedTor} (1)(b), the Weyl group $W(E_6)$ acts transitively on the cups and hence the toroidal boundary components. We fix a component $T$ of $T_{3A_2,m}$. We claim that the stabilizer $S_T(\cong S_{3A_2,m}$ see  \Cref{P:MarkedTor} (3))
of $T$ in $W(E_6)$ permutes all components of $T \cap \widetilde D_{n,m}$.
For this we recall that $T\cong (\PP^1)^{\times 3}$ and use the description of the
action of the stabilizer $S_T$ given in  \Cref{P:MarkedTor} (3).
This stabilizer contains a normal subgroup isomorphic to $(S_3)^{\times 3}$ acting independently on each $\PP^1$ factor. There is also a residual $S_3$ factor, which acts by permuting the $3$ copies of $\PP^1$ in $T$.
The three copies of $(S_3)^{\times 3}$ each act on one of the copies of $(\PP^1)^{\times 3}$ by
the projectivization of the standard action of $S_3$ on $\CC^2$, or more intrinsically here, the projectivization of the action of $W(A_2)$ on $A_2\otimes_{\ZZ}\CC$, and trivially on the other two factors.  Choosing
suitable coordinates we can assume that $S_3$ permutes the points $0,1,\infty$. By \cite[Prop. 11.2']{naruki} the intersection $T \cap \widetilde D_{n,m}$ consists of $9$ components,
namely  the surfaces $\{0\} \times (\PP^1)^{\times 2}$, $\{1\} \times (\PP^1)^{\times 2}$ and $\{\infty\} \times (\PP^1)^{\times 2}$ and their translates under the group $S_3$ interchanging the three factors of $(\PP^1)^{\times 3}$.
Clearly these components are permuted under $S_{T}$.

We can now work with the component $\{0\} \times (\PP^1)^{\times 2}$ and recall that in $\oBGm$, the divisor $\widetilde D_{n,m}\cup T_{3A_2,m}$ has simple normal crossings. The stabilizer of a point
$P=(0,*_1,*_2)$ with $*_i \neq 0,1,\infty$ and $*_1 \neq *_2$ has order $2$;  its non-trivial element is the involution in the first factor of $(S_3)^{\times 3}$ which fixes $0$ and interchanges $1$ and $\infty$.
This defines a reflection in $W(E_6)$ whose fixed locus is the component of $ \widetilde D_{n,m}$ passing through $P$.
This involution also fixes the component $T$ of $T_{3A_2,m}$ (as a set, not pointwise). Thus we can choose local analytic coordinates $(x_1,x_2,x_3,x_4)$ on $\oBGm$ near~$P$ such that $\widetilde D_{n,m}$ and $T_{3A_2,m}$ are the zero loci of coordinates $x_1$ and $x_2$, respectively, and such that the stabilizer acts by $x_1 \mapsto -x_1$, leaving all other coordinates fixed. On the quotient we can therefore take local analytic coordinates $y_1=x_1^2, y_i=x_i, i= 2,3,4$. Then locally $\widetilde D_n$ and $T_{3A_2}$
are given by $y_1=0$ and $y_2=0$ respectively, and the claim follows.
\end{proof}

\begin{rem}\label{R:irreducibleintersection}
In \Cref{rem:intersectionwithboundarylocal} we shall provide a different proof for the fact that $W(E_6)$ acts transitively on the components of $\widetilde D_{n,m}\cup T_{3A_2,m}$.
\end{rem}

\begin{proof}[{Proof of \Cref{T:mainNonIso}}]
This follows immediately from \Cref{P:discMK} and \Cref{P:discTor}. Indeed, these propositions show {\em a priori} that the period map does not extend to an {\em isomorphism}. However, this is enough to show that the period map does not extend to a morphism in either direction. Indeed,
since $f$ gives an isomorphism $\MK -D_{3A_2} \cong \oBG - T_{3A_2}$, if $f$ extended to a morphism, it would have to send the irreducible divisor $D_{3A_2}$ to the irreducible divisor $T_{3A_2}$.
Thus, if $f$ were to extend to a morphism, which was not an isomorphism, it would have to be a small contraction. For a small contraction of complex varieties $f:Y\to X$, if $Y$ is quasi-projective and $X$ is normal, then $X$ is not $\QQ$-factorial, see eg.~\cite[Cor.~2.63]{kollarmori}. 
This would contradict the fact that $\oBG$ is $\QQ$-factorial, having only finite quotient singularities. A similar argument holds for the rational map $f^{-1}$, since $\MK$ is also $\QQ$-factorial.
\end{proof}

As a corollary of \Cref{T:mainNonIso}, we have the following result, which  contrasts with \Cref{C:oBGNCb} for the toroidal compactification, and strengthens \Cref{P:discMK} (which was itself used in the proof of \Cref{T:mainNonIso}) for the Kirwan compactification.

\begin{cor}\label{cor:quickproofnontransversal}
The boundary $\widetilde D_{A_1}\cup D_{3A_2}$ in $\MK$ is not a normal crossings divisor, even up to finite quotients.
\end{cor}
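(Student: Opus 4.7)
The plan is to deduce this as a corollary of \Cref{T:mainNonIso} by contraposition, using the Borel extension theorem. Because the target $\oBG$ is the toroidal compactification of a ball quotient, Borel's theorem applies to morphisms defined on the complement of a normal crossings divisor in a smooth variety, and produces an extension to the toroidal compactification. This is precisely the observation already flagged in \Cref{rem-gks}, where the authors explain that an argument analogous to the marked case fails for $\MK$ because of the lack of an NC boundary.

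Suppose for contradiction that $\widetilde D_{A_1}\cup D_{3A_2}\subseteq\MK$ is a normal crossings divisor up to finite quotients. The open immersion $\calM\hookrightarrow\BG$ composed with $\BG\hookrightarrow\oBG$ is a morphism $\calM\to\oBG$ whose target is the toroidal compactification of a ball quotient. Under the NC hypothesis on $(\MK,\widetilde D_{A_1}\cup D_{3A_2})$, the Borel extension theorem then produces an extension $\bar f\colon\MK\to\oBG$. One checks that $\bar f$ must agree with the birational map $f$ on $\MK-D_{3A_2}$: both are morphisms to the separated scheme $\oBG$ that coincide on the dense open $\calM$, so by uniqueness of extensions from a dense open to a normal scheme with separated target, we have $\bar f|_{\MK-D_{3A_2}}=f$. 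Hence $\bar f$ is a morphism $\MK\to\oBG$ extending $f$, contradicting \Cref{T:mainNonIso}.

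The main subtlety is ensuring the applicability of Borel's theorem in the presence of finite quotient singularities, since the classical statement requires a genuine smooth variety with simple NC boundary. This is handled by working locally: around each point of $\MK$, one selects an analytic chart of the form $V\to V/G$ with $V$ smooth and $G$ finite, chosen so that the preimage of $\widetilde D_{A_1}\cup D_{3A_2}$ is an honest SNC divisor on $V$ (this is exactly what the hypothesis ``NC up to finite quotients'' supplies). Applying the classical Borel theorem on $V$ yields a local extension to $\oBG$; by uniqueness this extension is $G$-equivariant, so it descends to $V/G$, and by the same uniqueness the local extensions glue to a global morphism $\MK\to\oBG$. Alternatively, one may cite directly an orbifold version of the Borel extension theorem. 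This completes the contradiction and establishes the corollary.
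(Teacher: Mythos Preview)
Your argument is correct and is essentially the same as the paper's: assume the boundary is NC up to finite quotients, invoke the extension theorem for period maps to toroidal compactifications (the paper cites \cite{AMRT}, you phrase it as Borel extension) to extend $f$ to a morphism $\MK\to\oBG$, and contradict \Cref{T:mainNonIso}. Your extra paragraph handling the finite-quotient subtlety via local smooth charts is a welcome elaboration of what the paper leaves implicit.
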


\begin{proof}
If the boundary $\widetilde D_{A_1}\cup D_{3A_2}$ in $\MK$ were a normal crossings divisor, up to finite quotients, then the standard extension theorems for period maps to toroidal compactifications \cite{AMRT} would imply that the period map $f:\MK \dashrightarrow \oBG$ extended to a morphism, contradicting \Cref{T:mainNonIso}.
\end{proof}

\begin{rem}\label{R:log-depend}
We emphasize that \Cref{cor:quickproofnontransversal} depends on \Cref{T:mainNonIso}, which in turn depends on \Cref{P:discMK}, so that the proof of \Cref{cor:quickproofnontransversal} above does  not provide an alternate proof of \Cref{P:discMK}.  On the other hand,  \Cref{T:mainNonIso} follows from \Cref{T:mainNonK}, whose proof is independent of \Cref{P:discMK}.  In other words, one can give an alternate proof of \Cref{P:discMK} by first proving \Cref{T:mainNonK}, which implies  \Cref{T:mainNonIso}, and then proving \Cref{cor:quickproofnontransversal}.
\end{rem}

\section{The canonical bundles of the ball quotient models}\label{sec:canonicalbundle}
While \Cref{T:mainNonIso}
says that the period map does not extend to an isomorphism between $\MK$ and $\oBG$, {\em a priori} it is possible that $\MK$ and $\oBG$ might be abstractly isomorphic. To distinguish them, we study their canonical classes. We start with a fairly complete discussion of divisor classes (in particular the canonical class) on the ball quotient side. Similar computations occur in \cite[\S 7]{CML} for the ball quotient model for cubic threefolds. However, due to the simpler structure of the moduli of cubic surfaces, we are able to obtain sharper results here.

\subsection{Divisors and relations in $\BBG$}
The ball quotients come equipped with a natural $\QQ$-line bundle $\lambda$, the so-called Hodge line bundle,  that gives the polarization for the Baily--Borel compactification (and pulls back to a big and nef bundle on the toroidal compactification). Since $\BBG\cong \GIT\cong \PP(1,2,3,4,5)$ has Picard rank $1$, the divisors $\lambda$, $ D_n,$ and~$R$ must all be proportional. It turns out that one can express the classes of these divisors in terms of $\lambda$ as  a consequence of the work of Borcherds. Specifically, the following holds:

\begin{teo}[{\cite[Thm. 4.7]{AllFrei}}] \label{THM-AF}
There is an automorphic form $\chi_4$  of weight $4$ whose divisor in $\calB_4$ is the sum of all short mirrors, each with multiplicity $1$. Similarly, there is an automorphic form $\chi_{75}$  of weight $75$ whose divisor in $\calB_4$ is the sum of all long mirrors, each with multiplicity $1$.
\end{teo}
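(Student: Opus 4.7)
The plan is to construct the forms $\chi_4$ and $\chi_{75}$ explicitly via Borcherds' theory of automorphic products, in the unitary formulation adapted by Allcock--Freitag to the Eisenstein-lattice construction of \cite{ACTsurf}. Recall that $\calB_4$ is the symmetric domain for $U(1,4)$ acting on a Hermitian $\ZZ[\omega]$-lattice $\Lambda$ of signature $(1,4)$ (with $\omega$ a primitive cube root of unity), and that the arithmetic group $\Gamma$ is generated modulo its center by complex reflections whose fixed mirror hyperplanes fall into two $\Gamma$-orbits: the \emph{short mirrors}, orthogonal complements of primitive short roots, which project to the discriminant divisor $D_n \subseteq \BG$; and the \emph{long mirrors}, orthogonal complements of long roots, which project to the Eckardt divisor $R$. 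Since $\Pic(\BBG)_{\QQ}$ has rank one, the $\Gamma$-invariant divisors $\sum H_{\mathrm{sh}}$ and $\sum H_{\mathrm{lg}}$ on $\calB_4$ are automatically proportional to the Hodge line bundle $\lambda$; the content of the theorem is the integrality of the proportionality constants ($4$ and $75$) together with the existence of automorphic sections realizing these divisors.

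The key step is to invoke the singular theta correspondence for the embedding $U(1,4) \hookrightarrow O(2,8)$ (the unitary version of Borcherds' automorphic products). This machine takes as input a weakly holomorphic vector-valued modular form $f$ of appropriate half-integral weight for the Weil representation attached to the discriminant form of $\Lambda$, and produces an automorphic form $\Psi(f)$ on $\calB_4$ whose weight is a prescribed multiple of the constant Fourier coefficient of $f$, and whose divisor is the explicit $\ZZ$-linear combination of mirror hyperplanes read off from the principal part of $f$. I would then exhibit two specific input forms $f_4$ and $f_{75}$ whose principal parts select, respectively, the orbit of short roots with coefficient $1$ (and nothing else), and the orbit of long roots with coefficient $1$ (and nothing else). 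Setting $\chi_4 := \Psi(f_4)$ and $\chi_{75} := \Psi(f_{75})$ then gives forms with the stated divisors, and Borcherds' weight formula pins down the weights as $4$ and $75$.

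An independent cross-check comes from the invariant-theoretic description $\BBG \cong \PP(1,2,3,4,5)$: the discriminant polynomial of \eqref{E:DinIs} has weighted degree $32$ in the Salmon invariants, and combined with the ramification along short mirrors of the quotient map $\calB_4 \to \BG$ this recovers weight $4$; similarly $I_{100}^2$ has weighted degree $200$, and the ramification along the Eckardt locus accounts for the weight $75$. Picard rank one on $\BBG$ ensures that no extraneous components can enter these expressions.

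The principal technical obstacle is proving existence of the weakly holomorphic input forms with the prescribed principal parts. This is an obstruction-theoretic question resolved by Borcherds' serre duality theorem: a given principal part is realized by a weakly holomorphic form if and only if it pairs to zero against every holomorphic cusp form of complementary weight for the dual Weil representation. In our setting, the relevant space of cusp forms is small enough in the low-weight regime needed that the required pairings can be checked by direct computation with vector-valued Eisenstein and cusp forms for the discriminant form of $\Lambda$, as done in \cite{AllFrei} by exhibiting both modular forms explicitly.
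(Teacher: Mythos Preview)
The paper does not prove this theorem at all: it is stated purely as a citation of \cite[Thm.~4.7]{AllFrei}, with no argument given, and is then immediately used to derive the relations \eqref{eq:relationslambda}. So there is no ``paper's own proof'' to compare your proposal against.

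That said, your outline is a fair sketch of how Allcock--Freitag actually establish the result: they do construct $\chi_4$ and $\chi_{75}$ as Borcherds products on the unitary group, with the weights and divisors read off from the input weakly holomorphic forms. Your cross-check via the weighted-projective description and the ramification indices is also in the spirit of how the present paper uses the theorem (cf.~\eqref{eq:relationslambda} and \eqref{eq:Olambda}). One small caution: the numerics you mention for the cross-check (degree $32$ for the discriminant, degree $200$ for $I_{100}^2$) are consistent with the paper's conventions, but deducing the weights $4$ and $75$ from them requires keeping straight the ramification orders ($6$ along the discriminant, $2$ along the Eckardt locus) and the identification $\calO_{\PP(1,2,3,4,5)}(1)=6\lambda$; your sketch gestures at this but does not carry it out. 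As a reconstruction of the Allcock--Freitag argument this is reasonable, but be aware that for the purposes of the present paper the statement is simply quoted as a black box.
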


Taking into account the ramification orders $6$ for the discriminant divisor, which corresponds to the sum of all short mirrors, and $2$ for the Eckardt divisor, which corresponds to the sum of all long mirrors (see esp. \cite[\S11]{ACTsurf}), we obtain the following equalities in $\Pic_\QQ(\BBG)$:
\begin{equation}\label{eq:relationslambda}
4\lambda=\frac{1}{6}  D_n;\qquad
75\lambda=\frac{1}{2} R\,.
\end{equation}
Similarly, in the marked case, where the map $\calB_4\to \BBGm$ is only ramified along the nodal locus with index $3$ (e.g., \cite[Thm. 7.26]{ACTsurf}),
the following holds in $\Pic_\QQ((\BG_m)^*)$:
\begin{equation*}
4\lambda_m=\frac{1}{3}  D_{n,m};\qquad
75\lambda_m= R_m\,.
\end{equation*}

Under the identification $\BBG\cong \PP(1,2,3,4,5)$, we have seen \eqref{equation:invarianttheory} that
$D_n=\calO_{\PP(1,2,3,4,5)}(4)$ and $R=\calO_{\PP(1,2,3,4,5)}(25)$
(which gives $R=\frac{25}{4} D_n$, agreeing with \eqref{eq:relationslambda}), either of which combined with \eqref{eq:relationslambda}
give the following relation between the natural polarizations on $\BBG$ and $\PP(1,2,3,4,5)$:
\begin{equation}\label{eq:Olambda}
\calO_{\PP(1,2,3,4,5)}(1)=6\lambda\,.
\end{equation}

We now turn to the canonical divisor. By the general theory of ball quotients, e.g., \cite[Prop.~3.4]{MumHirz77}, this is given by a multiple of the Hodge line bundle $\lambda$, adjusted by a contribution
due to the ramification, see e.g.,  \cite[Thm.~3.4]{Alexeev}. The relevant multiple of $\lambda$ is ``$\dim+1$''. We also note that $\lambda$ is given by the canonical automorphy factor given by the Jacobian,
and that modular forms of weight $k$ are exactly the sections of $\lambda^{\otimes k}$. The ramification divisor in our case is the union of the discriminant divisor $D_n$ and the Eckardt divisor $R$, which have branch orders $6$ and $2$, respectively. Thus we obtain
\begin{equation}\label{eq-KBBG}
K_{\BBG}=5\lambda - \frac{5}{6}  D_n - \frac{1}{2}  R =-90\lambda\,.
\end{equation}
By \eqref{eq:Olambda}, we see that $K_{\BBG}=\calO_{\PP(1,2,3,4,5)}(-15)$, which agrees with the canonical bundle of $\PP(1,2,3,4,5)$ (e.g., \eqref{equation:invarianttheory}). In the marked case, as the map $\calB_4\to \BBGm$ is only ramified along the nodal locus with index $3$, we thus obtain
\begin{equation}\label{eq:can-mark}
K_{\BBGm}=5\lambda - \frac{2}{3}  D_{n,m}\,.
\end{equation}

The same general considerations give the formulas for the canonical bundles of the toroidal compactifications.
\begin{pro}\label{pro:canonicalbundleontoroidals}
The following hold:
\begin{enumerate}
\item $K_{\oBG}=5\lambda - \frac{5}{6}  \widetilde D_n - \frac{1}{2}  \widetilde R - T_{3A_2}$\,,
\item $K_{\oBGm}=5\lambda_m - \frac{2}{3}\widetilde D_{n,m} -T_{3A_2,m}$\,.
\end{enumerate}
\end{pro}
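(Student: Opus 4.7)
The plan is to apply the standard canonical bundle formula for toroidal compactifications of ball quotients, in the same spirit as the Baily--Borel calculations \eqref{eq-KBBG} and \eqref{eq:can-mark}. For a ball quotient $\calB_n/\Gamma$, Mumford's theory gives, on a toroidal compactification $\overline{\calB_n/\Gamma}$, the formula
\[
K_{\overline{\calB_n/\Gamma}} = (n+1)\lambda - \sum_i \tfrac{e_i-1}{e_i}\,\widetilde D_i - T,
\]
where $\lambda$ is the canonically extended Hodge line bundle, the $D_i$ are the branch divisors of the uniformization with ramification indices $e_i$, and $T$ is the reduced toroidal boundary. Compared to the Baily--Borel formula, the only new ingredient is the additional $-T$ correction. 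This correction reflects a Mumford-style local computation: near a boundary divisor, toroidal coordinates $(z_1,\ldots,z_{n-1},q)$ with $q=e^{2\pi i w}$ give
\[
dz_1\wedge\cdots\wedge dz_{n-1}\wedge dw \;=\; \tfrac{1}{2\pi i}\,\tfrac{dq}{q}\wedge dz_1\wedge\cdots\wedge dz_{n-1},
\]
so the standard holomorphic volume form on $\calB_n$ acquires a simple logarithmic pole along $T$, i.e., $K + T$ (and not $K$ alone) extends as the pluri-Hodge bundle.

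I would treat the marked case (2) first, as $\oBGm$ is smooth with simple normal crossing boundary $\widetilde D_{n,m}\cup T_{3A_2,m}$ by \Cref{P:MarkedTor}(0), so there are no quotient subtleties. The uniformization $\calB_4\to \BG_m$ is ramified only along the nodal divisor with index $3$ (as used for \eqref{eq:can-mark}), and the formula above with $n=4$ gives
\[
K_{\oBGm} = 5\lambda_m - \tfrac{2}{3}\widetilde D_{n,m} - T_{3A_2,m},
\]
which is (2). For the unmarked case (1), one option is to apply the general formula directly: the uniformization $\calB_4\to \BG$ is ramified along $D_n$ with index $6$ and along $R$ with index $2$, and the presence of the finite quotient singularities of $\oBG$ causes no trouble since we work with $\QQ$-Cartier divisors. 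Alternatively, one can descend (2) through the $W(E_6)\times\{\pm 1\}$-cover $\oBGm\to\oBG$ of \Cref{Prop-compatible} by Riemann--Hurwitz, using that $-1$ acts trivially on $\calB_4$ and that, by \Cref{P:MarkedTor}(2), the cover is ramified to order $2$ along both $\widetilde D_n$ and $\widetilde R$ and is generically \'etale along $T_{3A_2}$. Either route produces
\[
K_{\oBG} = 5\lambda - \tfrac{5}{6}\widetilde D_n - \tfrac{1}{2}\widetilde R - T_{3A_2}.
\]

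The main technical point to verify carefully is the coefficient $-1$ of the toroidal boundary. In the marked situation this is immediate from the local Mumford computation together with the smoothness of $\oBGm$ and the fact that $T_{3A_2,m}$ appears with reduced structure. For the descent to the unmarked side, the needed input is that the cover $\oBGm\to\oBG$ is generically \'etale along the toroidal boundary (\Cref{P:MarkedTor}(2b)), so no further multiplicity correction enters and the coefficient of $T_{3A_2}$ remains $-1$. All other terms match the corresponding Baily--Borel formulas \eqref{eq-KBBG} and \eqref{eq:can-mark} via the strict transforms under $p:\oBG\to\BBG$ (respectively $\oBGm\to\BBGm$), since $\lambda$ and $\lambda_m$ are pulled back from the Baily--Borel compactifications by construction.
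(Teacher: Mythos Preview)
Your proposal is correct and follows essentially the same approach as the paper, which simply invokes Mumford's Hirzebruch proportionality theorem as a standard computation (citing \cite[Thm.~3.4]{Alexeev}). You supply considerably more detail than the paper's one-line proof---the explicit local computation for the $-T$ coefficient, the careful treatment of the marked case first, and the two alternative routes for descending to the unmarked case---but the underlying method is identical.
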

\begin{proof}
This is a standard computation, a consequence of Mumford's Hirzebruch proportionality theorem (see e.g., \cite[Thm.~3.4]{Alexeev}).
\end{proof}

\subsection{Computations of discrepancies}
In order to enable us to compute the top self-intersection of the canonical class, in this section we compute how divisors on the Baily--Borel compactifications compare to divisors on the toroidal compactifications.

\begin{cor}\label{cor:discr}
In the notation above, the canonical class is given by
\begin{equation}\label{E:KTorMkd}
K_{\oBGm}=p_m^*K_{\BBGm}+T_{3A_2,m}\,.
\end{equation}
\end{cor}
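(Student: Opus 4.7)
The plan is to deduce the formula directly from the explicit expressions for $K_{\BBGm}$ and $K_{\oBGm}$ given in \eqref{eq:can-mark} and \Cref{pro:canonicalbundleontoroidals}(2). Since the Hodge line bundle on $\oBGm$ is the pullback of its counterpart on $\BBGm$, i.e., $\lambda_m = p_m^*\lambda$, subtracting the two canonical bundle formulas shows that proving the corollary is equivalent to establishing the identity
$$p_m^* D_{n,m} = \widetilde D_{n,m} + 3\,T_{3A_2,m}\,,$$
that is, that $D_{n,m}$ passes through each cusp of $\BBGm$ with multiplicity $3$.

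This multiplicity can be verified one irreducible component of $T_{3A_2,m}$ at a time. Fix such a component $T \cong (\PP^1)^{\times 3}$ and write $p_m^* D_{n,m} = \widetilde D_{n,m} + k\,T_{3A_2,m}$ for the unknown coefficient $k$. Since $p_m$ contracts $T$ to the corresponding cusp, $p_m^* D_{n,m}$ restricts to a numerically trivial $\QQ$-divisor on $T$, so restriction to $T$ yields
$$\widetilde D_{n,m}|_T + k\,T|_T \equiv 0 \text{ in } N^1(T)_{\QQ}\,.$$

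Both terms on the left are computable from information already in the paper. By \cite[Prop.~11.2']{naruki} (already cited in the proof of \Cref{P:discTor}), the trace $\widetilde D_{n,m}\cap T$ consists of nine surfaces, three in each ruling, so $\widetilde D_{n,m}|_T = 3(H_1+H_2+H_3)$, where $H_i$ is the pullback of the hyperplane class from the $i$-th factor. For the self-intersection, \Cref{P:MarkedTor}(1a) identifies $\BBGm$ locally near each cusp with the affine cone over the Segre embedding $(\PP^1)^{\times 3}\hookrightarrow \PP^7$ by $\calO(1,1,1)$, with $p_m$ corresponding to the blowup of the cone vertex. Standard cone geometry then gives that the exceptional divisor $T$ has normal bundle $\calO(-1,-1,-1)$, so $T|_T = -(H_1+H_2+H_3)$. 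Plugging in gives $(3-k)(H_1+H_2+H_3) \equiv 0$, and ampleness of $H_1+H_2+H_3$ forces $k=3$.

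Substituting $k=3$ back yields $p_m^* K_{\BBGm} = 5\lambda_m - \tfrac{2}{3}(\widetilde D_{n,m} + 3\,T_{3A_2,m}) = K_{\oBGm} - T_{3A_2,m}$, which is the desired identity \eqref{E:KTorMkd}. The only nontrivial step is the normal bundle identification, which is a standard consequence of the local description of the blowup of an affine cone; everything else is bookkeeping with formulas already appearing in the paper.
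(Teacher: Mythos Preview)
Your proof is correct, but it takes a different route from the paper's. The paper argues directly: by \Cref{P:MarkedTor}(1a), the map $p_m$ is locally the blowup of the vertex of the cone over the Segre threefold $(\PP^1)^{\times 3}\hookrightarrow\PP^7$, and the discrepancy $+1$ is then a standard cone computation. You instead invoke the two canonical bundle formulas \eqref{eq:can-mark} and \Cref{pro:canonicalbundleontoroidals}(2) (both obtained from Mumford's proportionality, hence logically prior), reduce to the identity $p_m^*D_{n,m}=\widetilde D_{n,m}+3\,T_{3A_2,m}$, and prove the latter by restriction to a boundary component, using Naruki's count of the nine nodal surfaces and the normal bundle $\calO(-1,-1,-1)$ coming from the same cone description.

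In effect you have reversed the paper's order: the paper proves \Cref{cor:discr} first and then derives the multiplicity statement as \Cref{C:mu}(1) (with the compatibility you exploit recorded as a cross-check in \Cref{Rem-doublecheck}), whereas you prove the multiplicity first and deduce the corollary. Both arguments ultimately rest on the same local cone picture; your version trades a direct discrepancy calculation for the restriction argument, at the cost of bringing in \Cref{pro:canonicalbundleontoroidals} and Naruki's intersection count as additional inputs. One small point worth making explicit in your write-up: the pullback $p_m^*D_{n,m}$ is well defined because $D_{n,m}$ is $\QQ$-Cartier at the cusps (e.g., since $D_{n,m}=12\lambda_m$), which is needed for the restriction-to-$T$ step to make sense at a non-$\QQ$-factorial point of $\BBGm$.
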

\begin{proof}
By \Cref{P:MarkedTor},
locally near a cusp of $\BBGm$, the map $\oBGm\to \BBGm$ is the standard  blowup of the cone over $(\PP^1)^{\times3}\hookrightarrow \PP^7$. The claim then follows by a standard computation for the blowup.
\end{proof}

To descend to the unmarked case, we need to understand the intersection of the boundary divisor $T_{3A_2}$ with the two ramification divisors $\widetilde D_{n,m}$ and $\widetilde R_{m}$.
\begin{pro} \label{P:MarkDivRestrict}

\begin{enumerate}
\item The normal bundle to the marked toroidal boundary divisor $T_{3A_2,m}$, restricted to each irreducible boundary component, is equal to $\calO_{(\PP^1)^{\times3}}(-1,-1,-1)$.

\item   The restriction $\widetilde D_{n,m}|_{T_{3A_2,m}}$ of the marked discriminant divisor to each irreducible component of  the marked toroidal boundary divisor is isomorphic to  $\calO_{(\PP^1)^{\times3}}(3,3,3)$.

\item The restriction $\widetilde R_{m}|_{T_{3A_2,m}}$ of the marked Eckardt  divisor to each irreducible component of  the marked toroidal boundary divisor  is isomorphic to $\calO_{(\PP^1)^{\times3}}(12,12,12)$.
\end{enumerate}
\end{pro}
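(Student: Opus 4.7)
Plan. For part (1), the key input is the local structure of each cusp of $\BBGm$ given in \Cref{P:MarkedTor}(1)(a): an analytic neighborhood of a cusp is isomorphic to a neighborhood of the vertex of the affine cone over the Segre embedding $(\PP^1)^{\times 3}\hookrightarrow\PP^7$ defined by $\calO(1,1,1)$, and $p_m$ is locally the blow-up of this vertex. The blow-up of the vertex of such a projective cone is canonically the total space of the tautological line bundle of the embedding, with zero section equal to the exceptional divisor; this immediately identifies the normal bundle of each irreducible component of $T_{3A_2,m}$ with $\calO_{(\PP^1)^{\times 3}}(-1,-1,-1)$.

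For parts (2) and (3), the plan is to set up a uniform pullback framework. The marked Borcherds/Allcock--Freitag relations (obtained from \Cref{THM-AF} using the ramification index $3$ rather than $6$ along the discriminant for the cover $\calB_4\to\BBGm$) read $D_{n,m}=12\lambda_m$ and $R_m=75\lambda_m$ in $\Pic_\QQ(\BBGm)$. Pulling back via $p_m$ and writing the strict-transform-plus-exceptional decomposition gives
\begin{equation*}
p_m^*D_{n,m}=\widetilde D_{n,m}+a\,T_{3A_2,m}=12\lambda_m, \qquad p_m^*R_m=\widetilde R_m+b\,T_{3A_2,m}=75\lambda_m
\end{equation*}
for some non-negative integers $a,b$. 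Each irreducible component $T\cong(\PP^1)^{\times 3}$ of $T_{3A_2,m}$ is contracted by $p_m$ to a point, so $\lambda_m|_T$ is trivial; combining with part (1), the restrictions become
\begin{equation*}
\widetilde D_{n,m}|_T=\calO_{(\PP^1)^{\times 3}}(a,a,a), \qquad \widetilde R_m|_T=\calO_{(\PP^1)^{\times 3}}(b,b,b),
\end{equation*}
reducing both assertions to computing $a$ and $b$.

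To handle part (2), I would pin down $a$ by combining \Cref{cor:discr} (which gives $K_{\oBGm}=p_m^*K_{\BBGm}+T_{3A_2,m}$), the expression \eqref{eq:can-mark} for $K_{\BBGm}$, and \Cref{pro:canonicalbundleontoroidals}(2). This yields the identity
\begin{equation*}
5\lambda_m-\tfrac{2}{3}\widetilde D_{n,m}-T_{3A_2,m}=5\lambda_m-\tfrac{2}{3}\widetilde D_{n,m}-\tfrac{2a}{3}T_{3A_2,m}+T_{3A_2,m},
\end{equation*}
which forces $a=3$ and hence $\widetilde D_{n,m}|_T=\calO_{(\PP^1)^{\times 3}}(3,3,3)$.

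Part (3) is the main obstacle: because the cover $\calB_4\to\BBGm$ is unramified along the Eckardt locus, the Eckardt divisor does not enter $K_{\BBGm}$, and so the formal canonical-bundle argument used for part (2) cannot determine $b$. The plan is to extract $b$ from Naruki's explicit description of the Eckardt divisor inside his compactification $\overline{\calN}\cong\oBGm$ (via \Cref{ThmGallardo}), in complete analogy with his description of the discriminant from \cite[Prop.~11.2']{naruki} that was used in the proof of \Cref{P:discTor}. Equivalently, on the Eisenstein ball side one can count the long mirrors meeting a neighborhood of the chosen cusp, organized by which factor of $(\PP^1)^{\times 3}$ they hit. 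Either route is expected to produce $b=12$, giving $\widetilde R_m|_T=\calO_{(\PP^1)^{\times 3}}(12,12,12)$ and completing the proof.
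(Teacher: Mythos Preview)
Your argument for part~(1) is the same as the paper's.

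For part~(2) you take a genuinely different route. The paper reads off $\widetilde D_{n,m}|_T=\calO_{(\PP^1)^{\times 3}}(3,3,3)$ directly from Naruki's explicit description \cite[Thm.~11.2']{naruki} of the nine components of $\widetilde D_{n,m}\cap T$, each of type $\{\mathrm{pt}\}\times\PP^1\times\PP^1$. You instead determine the multiplicity $a$ by equating the two expressions for $K_{\oBGm}$ coming from \Cref{pro:canonicalbundleontoroidals}(2) and from \Cref{cor:discr} combined with \eqref{eq:can-mark}. This is valid and not circular, since all three inputs are established in the paper before \Cref{P:MarkDivRestrict} and independently of it; indeed the paper records exactly this compatibility as a cross-check in \Cref{Rem-doublecheck}. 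Your approach is slicker in that it avoids invoking Naruki's fine geometry, at the cost of importing \Cref{pro:canonicalbundleontoroidals}(2) as a black box from Mumford--Hirzebruch proportionality.

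For part~(3) you correctly identify that the canonical-bundle trick cannot see $b$, and you propose either Naruki's description or a long-mirror count. Both are viable, but you have not actually carried either one out, and this is where the content lies. The paper's proof does neither of your two routes directly: it factors the cover $U_m\to U$ through the intermediate quotient $U'=U_m/(S_3)^{\times 3}$, observes that the Eckardt ramification lives entirely in the residual $S_3$-quotient $\beta:U'\to U$, identifies $\widetilde R'\cap T'$ with the three small diagonals in $(\PP^1)^{\times 3}$ (class $\calO(2,2,2)$), and then pulls back along the degree-$6^3$ map $\alpha|_{T_0}:(\PP^1)^{\times 3}\to(\PP^1)^{\times 3}$ to get $\calO(12,12,12)$. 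Your second suggestion, the long-mirror count, is in fact carried out in the paper as an alternative argument in \Cref{rem:intersectionwithboundarylocal}: there are $18$ long roots orthogonal to a fixed isotropic vector, each contributing a divisor of type $\calO(1,1,0)$ (the fixed locus of a transposition of two $\PP^1$ factors), summing to $\calO(12,12,12)$. To complete your proof you would need to execute one of these computations rather than assert the expected answer.
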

\begin{proof}
The same argument as in \Cref{cor:discr} gives the first item. As discussed in \Cref{P:MarkedTor}, the divisors $\widetilde D_{n,m}$ and ${T_{3A_2,m}}$  intersect transversely, and  \cite[Thm. 11.2']{naruki} describes this intersection precisely. For a fixed irreducible component $T_0$ of $\widetilde T_{3A_2}$, which we have seen is isomorphic to $(\PP^1)^{\times 3}$, the intersection of $\widetilde D_{n,m}$ with it consists of  $9$ irreducible components of type $\{\textrm{pt}\}\times \PP^1\times \PP^1\subseteq \PP^1\times\PP^1\times \PP^1$, and thus of class $\calO_{(\PP^1)^{\times3}}(1,0,0)$ (up to permuting the coordinates). The claim (2) thus follows.

The stabilizer $S_{3A_2,m}\subseteq W(E_6)$ of $T_0$ was identified in \Cref{P:MarkedTor}. Taking $U_m$ to be a suitable invariant neighborhood of $T_0$, locally near $T_0$, the map $\oBGm\to \oBG$ is simply the quotient $U_m\to U_m/S_{3A_2,m}\cong U$, with $U$  a neighborhood of the toroidal boundary $T_{3A_2}\subseteq \oBG$. Since $S_{3A_2,m}$ contains a normal subgroup $(S_3)^{\times 3}$, we can take the intermediate quotient $U'=U_m/(S_3)^{\times 3}$ and obtain a diagram
$$\xymatrix{
U_m \ar@{->}[r]^{/(S_3)^{\times 3}}_{\alpha}&U'\ar@{->}[r]^{/S_3}_{\beta}&U\\
(\PP^1)^{\times3}\ar[r]\ar@{^{(}->}[u]&(\PP^1)^{\times3}\ar@{^{(}->}[u]\ar[r]&\ \ T_{3A_2}\cong \PP^3\ar@{^{(}->}[u]
}$$
compatible with \Cref{P:MarkedTor}(3) and \Cref{C:T3A2}. Let $T'\subseteq U'$ be the quotient $T_0/((S_3)^{\times 3})\cong (\PP(2,3))^{\times 3}\cong (\PP^1)^{\times 3}$.

To describe $\widetilde R_{m}\cap T_0$, first recall that $U_m\to U$ is ramified along the Eckardt and nodal loci, with order $2$ along each. The factorization $U_m\xrightarrow{\alpha} U'\xrightarrow{\beta} U$ has the property that $\alpha$ is ramified along the nodal locus, while $\beta$ is ramified along the Eckardt locus. Indeed, in the language of \cite{ACTsurf}, the subgroup $(S_3)^{\times 3}\subseteq S_ {3A_2,m}$ is generated by short roots (corresponding to the $9$ nodal components meeting $T_0$), and the residual $S_3=S_ {3A_2,m}/(S_3)^{\times 3}$ is generated by classes of long roots (corresponding to the Eckardt locus, and geometrically to cubics with an extra involution). In this description, it is clear that the restriction of the Eckardt locus to $T'$ is simply the sum of the $3$ small diagonals (each of type $\calO_{(\PP^1)^{\times 3}}(1,1,0)$, up to permutation), and thus of class $\calO_{(\PP^1)^{\times 3}}(2,2,2)$. The pullback via $\alpha_{\mid T_0}$ of this class will be of type $\calO_{(\PP^1)^{\times 3}}(12,12,12)$. Since $\alpha$ is not ramified along the Eckardt locus, this will be also the class of the reduced divisor $\widetilde{R}_{m}|_{T_0}$.
\end{proof}
\begin{rem}\label{rem:intersectionwithboundarylocal}
The last two claims in \Cref{P:MarkDivRestrict} can be obtained also by a purely arithmetic argument. As alluded to in the proofs above, they follow by counting the short and long roots incident to a fixed cusp in $\BBGm$. To explain this, we consider the vector space $\FF_3^5$, equipped with the standard orthogonal form of signature $(4,1)$. It is well known that the orthogonal group can then be identified as
\begin{equation*}
\O(\FF_3^5) \cong W(E_6) \times \{ \pm 1 \}.
\end{equation*}
An element $[w] \in \PP(\FF_3^5)$ is called {\em isotropic}, {\em short} or {\em long}, depending on whether the norm of $w$ equals $0,1$ or $2$ (this does not depend on the chosen representative in $\FF_3^5$).
By \cite[\S2]{AllFrei} (also \cite{ACTsurf}) these elements enumerate the cusps, the components of the discriminant divisor $\widetilde D_{n,m}$, and of the Eckardt divisor $\widetilde R_{m}$,  respectively.
We further know  from \cite[Prop.~2.1]{AllFrei} that there are $40$ isotropic, $36$ short, and
$45$ long elements in $ \PP(\FF_3^5)$, and that the Weyl group $W(E_6)$ acts transitively on each of these three sets.

Counting the number of components of the discriminant divisor $\widetilde D_{n,m}$  and of the Eckardt divisor $\widetilde R_{m}$ intersecting an irreducible component $T_0$ of $T_{3A_2,m}$ as above is then an easy enumeration. Indeed, the choice of the cusp, and of $T_0$, means fixing an isotropic element $h\in \PP(\FF_3^5)$, and a straightforward count shows that there are $9$ short and $18$ long vectors orthogonal to $h$, which counts the number of irreducible components of $\widetilde{D}_{n,m}$ and $\widetilde{R}_m$ that intersect~$T_0$.

To describe the intersection of these components of $\widetilde D_{n,m}$  and $\widetilde R_{m}$ with $T_0$, note that since the stabilizer subgroup of $h$ in $W(E_6)$ acts
transitively on the set of all sort (and also on the set of all long) vectors orthogonal to $h$, by symmetry it is enough to understand the intersection with $T_0$ of only one component of $\widetilde D_{n,m}$  and one component of $\widetilde R_{m}$  with
$T_0$, which can be seen, e.g.,  from the standard local coordinates near a boundary component.
All we need is then to understand the involutions which account for the degree $2$ ramification along $\widetilde D_n$ and $\widetilde R$. In the case of the nodal divisor, this is, up to symmetry, an involution on a factor $\PP^1$ which
fixes some point $p\in\PP^1$, and then the restriction of the component of $\widetilde D_{n,m}$ to $T_0$, fixed under this involution, is $p \times (\PP^1)^{\times 2}$, which gives the divisor class
$ \calO_{(\PP^1)^{\times3}}(1,0,0)$. Adding up all $9$ components of $\widetilde D_{n,m}$ that meet $T_0$ we obtain
$ \calO_{(\PP^1)^{\times3}}(3,3,3)$. In the case of the Eckardt divisor, the involution is given by interchanging two of the factors of $(\PP^1)^{\times 3}$, while fixing the third factor. In the case of interchanging the first two factors, the fixed locus is then equal to $\Delta_{3A_2,12} \times \PP^1$, and has
class $ \calO_{(\PP^1)^{\times3}}(1,1,0)$. Summing over all $18$ components of $\widetilde R_m$ which meet $T_0$ we obtain $\calO_{(\PP^1)^{\times3}}(12,12,12)$.
\end{rem}

We can now compare the discrepancies in the pullbacks of the discriminant and Eckardt divisors for the moduli of marked cubic surfaces:
\begin{cor}\label{C:mu}
Let $p_m^*:\oBGm\to \BBGm$ be the natural map. The following hold:
\begin{enumerate}
\item $p_m^* D_{n,m} = \widetilde D_{n,m}+3T_{3A_2,m}$\,;
\item $p_m^* R_m = \widetilde R_m + 12 T_{3A_2,m}$\,.
\end{enumerate}
\end{cor}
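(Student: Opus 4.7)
The plan is to exploit the contraction structure of $p_m: \oBGm \to \BBGm$, which collapses each of the $40$ irreducible components $T_0 \cong (\PP^1)^{\times 3}$ of $T_{3A_2,m}$ to a cusp point of $\BBGm$. Since $p_m$ restricts to an isomorphism outside the toroidal boundary, and $W(E_6)$ acts transitively on the $40$ components of $T_{3A_2,m}$, I can write in $\Pic_\QQ(\oBGm)$
$$p_m^* D_{n,m} = \widetilde D_{n,m} + a\, T_{3A_2,m}, \qquad p_m^* R_m = \widetilde R_m + b\, T_{3A_2,m}$$
for unique rational numbers $a,b$, so the task reduces to computing these two multiplicities.

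The key input is that for any $\QQ$-Cartier class $L$ on $\BBGm$, the restriction $(p_m^* L)|_{T_0}$ is trivial in $\Pic_\QQ((\PP^1)^{\times 3})$: after choosing a positive integer $N$ such that $NL$ is Cartier, $p_m^*(NL)|_{T_0}$ is the pullback to $T_0$ of a line bundle supported at a single point, hence trivial, and one divides back by $N$. I now restrict both displayed identities to a single component $T_0$. By \Cref{P:MarkedTor}(1)(b) the remaining $39$ components of $T_{3A_2,m}$ are disjoint from $T_0$, so the only contribution to $T_{3A_2,m}|_{T_0}$ is the self-intersection, which equals the normal bundle $\calO_{(\PP^1)^{\times 3}}(-1,-1,-1)$ computed in \Cref{P:MarkDivRestrict}(1). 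Combined with \Cref{P:MarkDivRestrict}(2) and (3), this yields
$$0 = \calO_{(\PP^1)^{\times 3}}(3,3,3) + a\, \calO_{(\PP^1)^{\times 3}}(-1,-1,-1),$$
$$0 = \calO_{(\PP^1)^{\times 3}}(12,12,12) + b\, \calO_{(\PP^1)^{\times 3}}(-1,-1,-1),$$
from which $a = 3$ and $b = 12$ follow at once.

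The only delicate point is the use of $\QQ$-Cartier pullbacks, since $D_{n,m}$ and $R_m$ are a priori only $\QQ$-Cartier on $\BBGm$; however $\BBGm$ is $\QQ$-factorial (it has only finite quotient singularities), so $p_m^*$ is well defined on $\Pic_\QQ$ after clearing denominators as above. I do not anticipate any genuine obstacle: the statement is essentially a direct bookkeeping consequence of the intersection-theoretic data already collected in \Cref{P:MarkDivRestrict}, together with the universal vanishing of pullbacks from points.
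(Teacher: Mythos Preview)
Your argument is essentially identical to the paper's: write $p_m^*D_{n,m}=\widetilde D_{n,m}+aT_{3A_2,m}$, restrict to a component $T_0$, use that the pullback from a point is trivial, and read off $a$ from \Cref{P:MarkDivRestrict}(1)--(3).

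One correction to your side remark: $\BBGm$ does \emph{not} have only finite quotient singularities, and is in fact not $\QQ$-factorial at the cusps. By \Cref{P:MarkedTor}(1a) each cusp is locally the cone over the Segre embedding $(\PP^1)^{\times 3}\hookrightarrow\PP^7$; since $\Pic((\PP^1)^{\times 3})\cong\ZZ^3$ is not generated by the hyperplane class $(1,1,1)$, this cone is not $\QQ$-factorial. The reason $p_m^*D_{n,m}$ and $p_m^*R_m$ are nonetheless well defined is that $D_{n,m}$ and $R_m$ are rational multiples of the Hodge class $\lambda_m$ (see \eqref{eq:relationslambda} and the marked analogue), and $\lambda_m$ is an honest $\QQ$-line bundle on $\BBGm$. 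With this adjustment your proof is complete.
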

\begin{proof}
This follows immediately from \Cref{P:MarkDivRestrict}, by restricting to $T_{3A_2,m}$. Indeed, {\em a priori} we have $p_m^* D_{n,m} = \widetilde D_{n,m}+aT_{3A_2,m}$ for some $a$, and restricting to the component $T_0$ of $T_{3A_2,m}$ gives $0= \widetilde D_{n,m}|_{T_0}+ aT_{3A_2,m}|_{T_0}$, which gives $a=3$ by parts (1) and (2) of \Cref{P:MarkDivRestrict}. The computation for the Eckardt divisor is identical using parts (1) and (3).
\end{proof}
\begin{rem}\label{Rem-doublecheck}
It is interesting to note that the formulas above are compatible with those of \Cref{pro:canonicalbundleontoroidals} that were obtained by general considerations. Specifically, using \eqref{eq:can-mark}, \Cref{cor:discr} and \Cref{C:mu}, we get
\begin{eqnarray*}
K_{\oBGm}&=&p_m^*K_{\BBGm}+T_{3A_2,m}\\
&=& 5\lambda-\frac{2}{3} (\widetilde D_{n,m}+3T_{3A_2,m})+T_{3A_2,m}\\
&=&5\lambda-\frac{2}{3} \widetilde D_{n,m}-T_{3A_2,m}\,,
\end{eqnarray*}
agreeing indeed with  \Cref{pro:canonicalbundleontoroidals}(2).
\end{rem}

The main result of this Section is the computation of the discrepancy of $p^*:\oBG\to \BBG$ in the unmarked case, as this coefficient will be crucial for computing the top self-intersection number of $K_{\oBG}$.

\begin{pro}\label{P:Tor_can}
The canonical bundle of the toroidal compactification of the ball quotient model of the moduli space of cubic surfaces is given by the formula:
\begin{equation*}
K_{\oBG}= p^*K_{\BBG}+16T_{3A_2}\,.
\end{equation*}
\end{pro}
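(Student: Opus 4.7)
The plan is to exploit the explicit formulas already available: combine the formula $K_{\oBG}=5\lambda-\tfrac{5}{6}\widetilde D_n-\tfrac{1}{2}\widetilde R-T_{3A_2}$ from \Cref{pro:canonicalbundleontoroidals}(1) with the expression $K_{\BBG}=5\lambda-\tfrac{5}{6} D_n-\tfrac{1}{2} R$ from \eqref{eq-KBBG}. Since $p$ is an isomorphism away from $T_{3A_2}$ and contracts $T_{3A_2}$ to the cusp, we may write $p^*D_n=\widetilde D_n+a\,T_{3A_2}$ and $p^*R=\widetilde R+b\,T_{3A_2}$ for uniquely determined rationals $a,b$. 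A direct substitution then reduces the proposition to the numerical identity $\tfrac{5}{6}a+\tfrac{1}{2}b=17$, which will follow from the claims $a=6$ and $b=24$.

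First, I would determine $a$ and $b$ by passing to the marked cover. The compatibility diagram \eqref{we6diag} gives $q^*\circ p^*=p_m^*\circ (q')^*$ for the $W(E_6)$-covers $q\colon\oBGm\to\oBG$ and $q'\colon\BBGm\to\BBG$, and since $q$ is finite surjective, $q^*$ is injective on $\Pic_\QQ$. From \Cref{P:MarkedTor}(2a) both $q$ and $q'$ are ramified with index $2$ along the discriminant and Eckardt loci, yielding $(q')^*D_n=2D_{n,m}$, $(q')^*R=2R_m$, $q^*\widetilde D_n=2\widetilde D_{n,m}$, and $q^*\widetilde R=2\widetilde R_m$; by \Cref{P:MarkedTor}(2b), $q$ is generically étale along $T_{3A_2,m}$, so $q^*T_{3A_2}=T_{3A_2,m}$. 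Applying $q^*$ to $p^*D_n=\widetilde D_n+a\,T_{3A_2}$ and using the marked identity $p_m^*D_{n,m}=\widetilde D_{n,m}+3\,T_{3A_2,m}$ from \Cref{C:mu} yields
\begin{equation*}
2\widetilde D_{n,m}+a\,T_{3A_2,m}=q^*p^*D_n=p_m^*(q')^*D_n=2p_m^*D_{n,m}=2\widetilde D_{n,m}+6\,T_{3A_2,m},
\end{equation*}
so $a=6$. The parallel calculation using $p_m^*R_m=\widetilde R_m+12\,T_{3A_2,m}$ gives $b=24$. Substituting back produces
\begin{equation*}
K_{\oBG}-p^*K_{\BBG}=\bigl(-1+\tfrac{5}{6}\cdot 6+\tfrac{1}{2}\cdot 24\bigr)\,T_{3A_2}=16\,T_{3A_2},
\end{equation*}
as claimed.

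I do not expect serious obstacles, since the geometric content has already been established in \Cref{P:MarkedTor}, \Cref{cor:discr}, and \Cref{C:mu}; the argument is essentially bookkeeping in $\Pic_\QQ$. The only subtle input is the equality $q^*T_{3A_2}=T_{3A_2,m}$ with no extra multiplicity, which rests on the generic étaleness of \Cref{P:MarkedTor}(2b). As a consistency check, after pulling back via $q$ the proposed identity should reduce, by the ramification formulas $K_{\oBGm}=q^*K_{\oBG}+\widetilde D_{n,m}+\widetilde R_m$ and $(q')^*K_{\BBG}=K_{\BBGm}-D_{n,m}-R_m$ together with \Cref{C:mu} (so that the coefficient of $T_{3A_2,m}$ becomes $-3-12+16=1$), to the identity $K_{\oBGm}=p_m^*K_{\BBGm}+T_{3A_2,m}$ of \Cref{cor:discr}; this offers an independent confirmation of the coefficient $16$.
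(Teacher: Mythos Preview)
Your proof is correct, and it is essentially the computation the paper records in the remark immediately following the proposition as a ``double check'': you derive $p^*D_n=\widetilde D_n+6T_{3A_2}$ and $p^*R=\widetilde R+24T_{3A_2}$ via the marked cover and then combine these with \Cref{pro:canonicalbundleontoroidals}(1) and \eqref{eq-KBBG}.

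The paper's own proof takes a slightly different route. Rather than computing the unmarked pullback coefficients $a,b$ directly, it invokes a general discrepancy formula for how the discrepancy of a blowup changes under a finite branched cover (citing \cite[2.3]{kollar_sings_13}): given that $K_{\oBGm}=p_m^*K_{\BBGm}+T_{3A_2,m}$ (\Cref{cor:discr}), the unmarked discrepancy is $a=\frac{1}{r}\bigl((1+\mu_{\widetilde D_{n,m}}+\mu_{\widetilde R_m})+1\bigr)-1$ with $r=1$, $\mu_{\widetilde D_{n,m}}=3$, $\mu_{\widetilde R_m}=12$, giving $16$. Your approach is more elementary and self-contained (it only uses linear algebra in $\Pic_\QQ$ plus the marked identities already established), while the paper's approach packages the same information through a general formula. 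Both rest on the same geometric inputs, namely \Cref{P:MarkedTor}(2) and \Cref{C:mu}.
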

\begin{proof} Since in \eqref{we6diag} we computed the discrepancy for the finite cover $\oBGm\to \BBGm$, a standard computation (see e.g., \cite[2.3]{kollar_sings_13}) gives
$$
K_{\oBG}= p^*K_{\BBG}+aT_{3A_2}
$$
for
$$
a= \frac{1}{r(T_{3A_2,m})}\left((1+\mu_{\widetilde D_{n,m}}+\mu_{\widetilde R_m})+1)\right)-1\,,
$$
where $r(T_{3A_2,m})=1$ is the ramification index for the cover $\oBGm\to\oBG$ along the toroidal boundary divisor (\Cref{P:MarkedTor}(4)), and $\mu_{\widetilde D_{n,m}}=3$ and $\mu_{\widetilde R_m}=12 $ are defined so that $p_m^* D_{n,m} = \widetilde D_{n,m}+\mu_{\widetilde D_{n,m}}T_{3A_2,m}$
and $p_m^* R_m = \widetilde R_m + \mu_{\widetilde R_m} T_{3A_2,m}$ (\Cref{C:mu}).  We conclude that $a= ((1+3+12) +1)-1=16$ as claimed.
\end{proof}
\begin{rem}
For completeness, we note that the analogue of \Cref{C:mu} in the unmarked case is
\begin{eqnarray*}
p^*D_n&=&\widetilde D_n+6T_{3A_2}\,;\\
p^*R&=&\widetilde R+24T_{3A_2}\,.
\end{eqnarray*}
Similarly to \Cref{Rem-doublecheck}, these formulas are compatible with \Cref{P:Tor_can}, \Cref{pro:canonicalbundleontoroidals}(1), and \eqref{eq-KBBG}, giving a double check of our computations.
\end{rem}

\subsection{Self-intersection numbers for the toroidal compactification}
Using the fact that the Baily--Borel compactification is a weighted projective space $\BBG\cong \GIT\cong \PP(1,2,3,4,5)$, and \eqref{eq:Olambda} we conclude:
\begin{cor} \label{cor-BBG}
On $\BBG$, the following holds:
$$\left(K_{\BBG}\right)^4=\frac{(-15)^4}{5!}=\frac{15^3}{2^3}=\frac{3375}{8}$$
and $\lambda^4=\frac{1}{5!\cdot 6^4}=\frac{1}{2^73^55}=\frac{1}{155520}$.\qed
\end{cor}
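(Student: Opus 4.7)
The plan is to reduce both self-intersection numbers to the single computation of the top self-intersection of the tautological divisor on the weighted projective space $\PP(1,2,3,4,5)$, which is a standard fact. Specifically, for any weighted projective space $\PP(q_0,\dots,q_n)$ one has
\[
\calO_{\PP(q_0,\dots,q_n)}(1)^n = \frac{1}{q_0 q_1 \cdots q_n},
\]
which can be seen either directly from the cover $h:\PP^n\to \PP(q_0,\dots,q_n)$ of \eqref{E:wPn-h} (noting $\deg h = q_0\cdots q_n$ and $h^*\calO_{\PP(q_0,\dots,q_n)}(1)=\calO_{\PP^n}(1)$) or from the standard toric intersection theory of weighted projective spaces (e.g., \cite{CLS}).

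First I would compute $K_{\BBG}^4$. By \eqref{E:can} (equivalently, the first line of \eqref{equation:invarianttheory}) we have $K_{\BBG}=-15\,\calO_{\PP(1,2,3,4,5)}(1)$, so
\[
K_{\BBG}^4 = (-15)^4\cdot\calO_{\PP(1,2,3,4,5)}(1)^4 = \frac{15^4}{1\cdot2\cdot3\cdot4\cdot5} = \frac{15^4}{5!} = \frac{3375}{8},
\]
where the last equality uses $5!=120=8\cdot 15$.

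Next I would compute $\lambda^4$ using the relation $\calO_{\PP(1,2,3,4,5)}(1)=6\lambda$ from \eqref{eq:Olambda}, which immediately gives
\[
\lambda^4 = \frac{1}{6^4}\cdot\calO_{\PP(1,2,3,4,5)}(1)^4 = \frac{1}{6^4\cdot 5!} = \frac{1}{2^7\cdot 3^5\cdot 5} = \frac{1}{155520}.
\]

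There is no serious obstacle: both formulas follow by direct substitution once one accepts the weighted-projective intersection number $\calO(1)^n=1/\prod q_i$. The only mild subtlety worth flagging is that $\calO_{\PP(1,2,3,4,5)}(1)$ is only a $\QQ$-Cartier divisor, so the self-intersection has to be computed rationally (or, equivalently, via the pullback to $\PP^4$ and division by the degree of $h$); this is exactly the viewpoint already adopted in the surrounding discussion of \eqref{E:wPn}--\eqref{E:KWP}.
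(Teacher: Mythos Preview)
Your proposal is correct and follows essentially the same approach as the paper: use the identification $\BBG\cong\PP(1,2,3,4,5)$ together with the standard intersection number $\calO_{\PP(q_0,\dots,q_n)}(1)^n=1/\prod q_i$, then substitute $K_{\BBG}=-15\,\calO(1)$ from \eqref{E:can} and $\lambda=\tfrac{1}{6}\calO(1)$ from \eqref{eq:Olambda}. The paper in fact leaves this corollary without proof, merely pointing to the weighted projective space structure and \eqref{eq:Olambda}, so your write-up simply fills in the routine arithmetic.
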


Using the description of toroidal boundary given by \Cref{P:MarkedTor}, we obtain:
\begin{lem}\label{self-t3a2}
The self-intersection numbers of the toroidal boundary divisors are
\begin{eqnarray}
(T_{3A_2,m})^4&=&-240\quad(\hbox{on }\oBGm)\\
(T_{3A_2})^4&=& -\frac{1}{6^3}\quad(\hbox{on }\oBG)
\end{eqnarray}
\end{lem}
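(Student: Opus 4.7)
\medskip

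The plan is to use the explicit description of the marked toroidal boundary from \Cref{P:MarkedTor} together with the normal bundle computation of \Cref{P:MarkDivRestrict}, compute the self-intersection on $\oBGm$ first, and then descend to $\oBG$ via the degree of the $W(E_6)$ cover.

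First I would compute $(T_{3A_2,m})^4$ on $\oBGm$. Since \Cref{P:MarkedTor}(1b) says that $T_{3A_2,m}$ consists of $40$ pairwise disjoint irreducible components $T_1, \dots, T_{40}$, each isomorphic to $(\PP^1)^{\times 3}$, the self-intersection decomposes as
\[
(T_{3A_2,m})^4 \;=\; \sum_{i=1}^{40} (T_i)^4 \;=\; 40 \cdot (T_0)^4
\]
for any one component $T_0$ (the $W(E_6)$-action permutes them transitively, so they have equal self-intersections). By the self-intersection formula applied to the divisor $T_0$ in the smooth-up-to-quotients $4$-fold $\oBGm$, we have $(T_0)^4 = \int_{T_0} c_1(N_{T_0/\oBGm})^3$, and by \Cref{P:MarkDivRestrict}(1) this normal bundle is $\calO_{(\PP^1)^{\times 3}}(-1,-1,-1)$.

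Next I would evaluate this on $(\PP^1)^{\times 3}$. Letting $h_i$ denote the pullback of the point class via the $i$-th projection, one has $h_i^2 = 0$ and $h_1 h_2 h_3 = 1$, so for any line bundle $\calO(a,b,c)$,
\[
\calO(a,b,c)^3 \;=\; (ah_1+bh_2+ch_3)^3 \;=\; 6\,abc.
\]
Taking $a=b=c=-1$ gives $(T_0)^4 = -6$, hence $(T_{3A_2,m})^4 = -240$.

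For the second formula, I would use the finite cover $\pi \colon \oBGm \to \oBG$, which has degree $|W(E_6)|=51840$. By \Cref{P:MarkedTor}(2b) this cover is generically étale along $T_{3A_2,m}$, so as a Weil divisor one has $\pi^* T_{3A_2} = T_{3A_2,m}$ (no ramification contribution). Therefore
\[
(T_{3A_2,m})^4 \;=\; (\pi^* T_{3A_2})^4 \;=\; \deg(\pi)\cdot(T_{3A_2})^4 \;=\; 51840\cdot(T_{3A_2})^4,
\]
and solving gives $(T_{3A_2})^4 = -240/51840 = -1/216 = -1/6^3$, as claimed.

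All the geometric input is already in place, so no step is truly a major obstacle; the main point requiring care is the justification that $\pi^* T_{3A_2} = T_{3A_2,m}$ with ramification index exactly $1$ (hence no spurious multiplicity factor in the degree formula), which is guaranteed by the generic étaleness in \Cref{P:MarkedTor}(2b), together with the fact that the $40$ components of $T_{3A_2,m}$ are disjoint so that no cross-terms appear in the decomposition of $(T_{3A_2,m})^4$.
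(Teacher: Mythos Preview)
Your proof is correct and follows essentially the same route as the paper: compute the self-intersection of a single component of $T_{3A_2,m}$ via its normal bundle, multiply by $40$ using disjointness, and then descend by the unramified $W(E_6)$-cover of degree $51840$. The only cosmetic difference is that you invoke \Cref{P:MarkDivRestrict}(1) for the normal bundle $\calO_{(\PP^1)^{\times 3}}(-1,-1,-1)$ and compute $c_1^3=-6$ directly, whereas the paper phrases this step as ``the exceptional divisor of the blowup of the cone over the Segre embedding has self-intersection $-6$''; these are the same computation.
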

\begin{proof}
By \Cref{P:MarkedTor}, each component of $T_{3A_2,m}$ is the exceptional divisor of the standard blowup of the cone over the Segre embedding $(\PP^1)^{\times 3}\hookrightarrow \PP^7$. It follows that the self-intersection of each such component in $\oBGm$ is $-6$. Taking into account that there are $40$ disjoint such components, the first item follows. The degree of the map $\oBGm\to \oBG$ is $51,840=|W(E_6)|$, and since this covering map is unramified along $T_{3A_2}$, the second claim follows.
\end{proof}
Finally, we can compute the top degree self-intersection of the canonical class on the toroidal compactification.
\begin{teo}\label{thm:volume-oBG} The top self-intersection number of the canonical class on $\oBG$ is
\begin{equation}
(K_{\oBG})^4=\frac{25,589}{2^33^3}=\frac{25,589}{216}\,.
\end{equation}
\end{teo}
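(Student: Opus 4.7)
The plan is to expand $(K_{\oBG})^4$ using the formula from \Cref{P:Tor_can}, namely $K_{\oBG}=p^*K_{\BBG}+16\,T_{3A_2}$, and then exploit the fact that $p\colon\oBG\to\BBG$ contracts the toroidal boundary divisor $T_{3A_2}$ to the single cusp $c_{3A_2}\in\BBG$ to kill all mixed terms. Concretely, since $c_{3A_2}$ is a point, the restriction $(p^*K_{\BBG})|_{T_{3A_2}}$ is numerically trivial, and so by the projection formula
\begin{equation*}
(p^*K_{\BBG})^a \cdot (T_{3A_2})^b = 0 \qquad \text{whenever } a,b\geq 1,\ a+b=4.
\end{equation*}
Therefore only the two pure terms survive:
\begin{equation*}
(K_{\oBG})^4 = (p^*K_{\BBG})^4 + 16^4 (T_{3A_2})^4 = (K_{\BBG})^4 + 16^4 (T_{3A_2})^4.
\end{equation*}

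Next, I would plug in the two ingredients already computed. By \Cref{cor-BBG} we have $(K_{\BBG})^4=\tfrac{3375}{8}$, and by \Cref{self-t3a2} we have $(T_{3A_2})^4=-\tfrac{1}{216}$. Substituting gives
\begin{equation*}
(K_{\oBG})^4 = \frac{3375}{8} - \frac{65536}{216} = \frac{3375\cdot 27 - 65536}{216} = \frac{91125-65536}{216} = \frac{25589}{216},
\end{equation*}
which is exactly the claimed value $\tfrac{25589}{2^3 3^3}$.

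The only point that requires any care is the vanishing of the mixed terms. I would justify this cleanly using the projection formula: for any $k\geq 1$, the cycle $(p^*K_{\BBG})\cdot (T_{3A_2})^k$ can be computed by restricting $p^*K_{\BBG}$ to $T_{3A_2}$ first, and since $p(T_{3A_2})=\{c_{3A_2}\}$ is a single point, this restriction is trivial. No other step is delicate: both $(K_{\BBG})^4$ and $(T_{3A_2})^4$ have already been calculated earlier. Thus there is no real obstacle beyond a careful bookkeeping of denominators, and the whole argument reduces to the one-line expansion above.
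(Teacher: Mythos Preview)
Your proof is correct and follows essentially the same approach as the paper: expand $(p^*K_{\BBG}+16T_{3A_2})^4$, use that $T_{3A_2}$ is contracted to a point so all mixed terms vanish, and substitute the values from \Cref{cor-BBG} and \Cref{self-t3a2}. The paper's proof is terser (it simply writes down $(K_{\oBG})^4=(K_{\BBG})^4+16^4(T_{3A_2})^4$ without spelling out the projection-formula justification you give), but the argument is the same.
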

\begin{proof}
From \Cref{P:Tor_can}, we get
$$(K_{\oBG})^4=(K_{\BBG})^4+16^4(T_{3A_2})^4\,.$$
Substituting the numbers from \Cref{cor-BBG} and \Cref{self-t3a2}, the conclusion follows.
\end{proof}

\section{The canonical bundle of Kirwan desingularizations}\label{sec:KKirwan}
For addressing the issue of $K$-equivalence of the compactifications, we now need to perform the computations on the GIT side parallel to the ball quotient computations in the previous section. These turn out to be more involved, and we devote this section to the general setup and results on computing the canonical bundle of the Kirwan resolution of a GIT quotient. This does not seem to be available in the literature, and may be of independent interest. After discussing the general case, we specialize to the case of cubic surfaces in \Cref{SS:KMK}.

\subsection*{Setup}
We start with the general setup of a GIT triple $(X,L,G)$, where $X$ is a scheme of finite type (we will continue to work over $\CC$ to simplify notation, but the argument works over any algebraically closed field of characteristic zero, and, with minor adjustments, in positive characteristic, as well), $L$ is an ample line bundle on $X$, and $G$ is a connected reductive linear algebraic group acting on $X$, with $L$ being $G$-linearized.
We will also make the following assumptions that put us into the setup for Kirwan's work:

\begin{itemize}
\item $X$ is a smooth quasi-projective variety;
\item $X^s \ne \emptyset$, i.e.,~the stable locus is non-empty.

\end{itemize}

Note that from the second condition, and say the Luna Slice Theorem, it follows that there is a Zariski dense open subvariety $U\subseteq X^s$ and a finite group $G_X$ such that for all $x\in U$ the stabilizer $G_x\subseteq G$ is isomorphic (although not necessarily equal) to $G_X$; i.e.,~$G_X$ is the stabilizer of some general point of $X$.

We denote by
$$
q: X^{ss}\longrightarrow Y\coloneqq X/\!\!/_LG
$$
the GIT quotient.

\subsection{Canonical classes for GIT quotients}

Since $X^{ss}$ is normal, so is $Y$ (e.g, \cite[Prop.~3.1, p.45]{D03_LOIT}), and so canonical classes $K_{X^{ss}}$ and $K_Y$ are defined. Note that while $K_{X^{ss}}$ is Cartier, recall that there are elementary examples of quotients of smooth varieties by reductive groups that are not $\QQ$-Gorenstein (e.g., \cite[Exa.7.1]{braun_etal}); in other words, $K_Y$ need not be $\QQ$-Cartier.

\subsubsection{The stable locus}

Now let $Y^s\coloneqq X^{s}/G\subseteq Y= X/\!\!/_LG$ be the stable locus, and to fix notation, we have the map
$$
q^s: X^{s}\longrightarrow Y^s= X^s/G\,.
$$
Note that since $X$ is smooth, the Luna Slice Theorem implies that $Y^s$ is \'etale locally the quotient of a smooth variety $U$ by some finite group~$G_i$. In particular, $Y^s$ is $\QQ$-factorial, and there is a well-defined pullback $q^{s*}$ for $\QQ$-Weil divisor classes.

In this situation, we have the following Riemann--Hurwitz lemma:

\begin{lem}[Riemann--Hurwitz for the stable locus]\label{L:RH-DM}

Let $R^s$ be the divisorial locus in $X^s$ that has stabilizer strictly containing $G_X$ (i.e.,~the union of codimension~$1$ irreducible components of the locus of points in $X^s$ where the stabilizer is not isomorphic to $G_X$), let $R^s=\bigcup R_i^s$ be its decomposition into irreducible components, and let $G_{R_i}$ be the stabilizer of a general point of $R_i^s$.
Then
\begin{align*}
K_{X^s}&= q^{s*}K_{Y^s}+\sum (|G_{R_i}|/|G_X| -1)R^s_i\,.
\end{align*}

\end{lem}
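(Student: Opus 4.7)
The plan is to prove the identity \'etale locally on $Y^s$ via Luna's slice theorem, reducing it to a standard Riemann--Hurwitz formula for a finite quotient of a smooth variety. I would first fix $y \in Y^s$ with a preimage $x \in X^s$ having closed orbit, stabilizer $G_x$, and Luna slice $W$: then \'etale locally near $G\cdot x$ and $y$ one has isomorphisms $X^s \simeq G \times^{G_x} W$ and $Y^s \simeq W/G_x$, under which $q^s$ corresponds to the projection $[g,w] \mapsto [w]$.

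Next I would check that the generic stabilizer $G_X$ acts trivially on $W$. Indeed, after conjugating $x$ within its orbit one may arrange $G_X \subseteq G_x$; for a general $w \in W$ the stabilizer $G_w \subseteq G_x$ is abstractly isomorphic (hence $G$-conjugate) to $G_X$, and by order considerations $G_w = G_X$ inside $G_x$. So $G_X$ fixes a dense open subset of $W$, and since $W$ is smooth, it fixes all of $W$. Consequently the finite quotient $\pi_W\colon W \to W/G_x = W/(G_x/G_X)$ is a quotient of a smooth variety by $G_x/G_X$ acting with trivial generic stabilizer, and the classical Riemann--Hurwitz formula reads
\[ K_W \;=\; \pi_W^* K_{W/G_x} \;+\; \sum_F (e_F-1)\,F, \]
where $F$ ranges over the prime divisors in $W$ whose stabilizer $G_F$ in $G_x$ strictly contains $G_X$, and $e_F = |G_F|/|G_X|$.

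To lift this identity from $W$ to $G\times^{G_x}W$, I would use the \'etale Galois cover $\phi\colon G\times W \to G\times^{G_x} W$ with deck group $G_x$ acting diagonally via $h\cdot(g,w) = (gh^{-1},hw)$. Since $G$ is a connected algebraic group, $\Omega^1_G$ is trivial and hence $K_{G\times W} = p_W^* K_W$. Combining this with $\phi^* K_{G\times^{G_x}W} = K_{G\times W}$ (since $\phi$ is \'etale) and pulling back the Riemann--Hurwitz identity gives
\[ \phi^* K_{G\times^{G_x} W} \;=\; \phi^*\, q^{s*} K_{W/G_x} \;+\; \sum_F (e_F-1)\,p_W^* F. \]
As each $p_W^*F$ is $G_x$-invariant and descends to the Weil divisor $\widetilde F = G\times^{G_x}F \subseteq G\times^{G_x}W$, dividing by $\phi^*$ yields the local identity
\[ K_{G\times^{G_x}W} \;=\; q^{s*} K_{W/G_x} \;+\; \sum_F (e_F-1)\,\widetilde F. \]

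Finally, I would globalize by noting that each local divisor $\widetilde F$ lies in a unique irreducible component $R_i^s$ of $R^s$, with $e_F = |G_{R_i}|/|G_X|$, and that distinct components contribute independently. Assembling the local formulas produces the claimed identity. The main obstacle is this last step: making sure the ramification indices obtained from different Luna slices patch consistently along each irreducible component $R_i^s$, and that the triviality of the $G_X$-action on slices holds uniformly --- both rely on $X^s$ being smooth together with the hypothesis that $G_X$ is the \emph{generic} stabilizer.
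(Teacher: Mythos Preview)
Your proof is correct and follows essentially the same approach as the paper: reduce \'etale locally via the Luna Slice Theorem to a finite quotient of a smooth variety, then apply the standard Riemann--Hurwitz formula. The paper's proof is simply a three-sentence sketch of this same argument (``up to a smooth factor, the quotient $q^s$ is \'etale locally $U\to U/G_i$; now apply Riemann--Hurwitz''), whereas you have carefully unpacked the smooth factor via $G\times W \to G\times^{G_x}W$ and the triviality of $K_G$. One small imprecision: in step 4 an individual ramification divisor $F\subseteq W$ need not be $G_x$-invariant, so $p_W^*F$ need not descend on its own; but the full ramification sum $\sum_F(e_F-1)F$ is $G_x$-invariant (being $K_W-\pi_W^*K_{W/G_x}$), and that is all you need to descend and match with $\sum(|G_{R_i}|/|G_X|-1)R_i^s$.
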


\begin{proof}
It suffices to check \'etale locally. The Luna Slice Theorem implies that, up to a smooth factor, the quotient $q^s:X^s\to Y^s$ is \'etale locally equivalent to the quotient $U\to U/G_i$ for a smooth scheme $U$ and some finite group~$G_i$. Computing the canonical bundles is then a standard computation for the ramified cover $U\to U/G_i$ (see, e.g., \cite[pp.63-4]{kollar_sings_13}).
\end{proof}

\subsubsection{Strictly semi-stable locus of codimension at least~$2$}
The computations for the stable locus carry over immediately to the general case, as long as the strictly semi-stable locus is of codimension at least~$2$.
From now on, we will thus assume that $X^{ss}-X^{s} \subseteq X^{ss}$ and $Y-Y^s\subseteq Y$ are codimension at least $2$.
Under this assumption, one can define a pullback $q^*$ on $\QQ$-Weil divisor classes by restricting to the stable locus $Y^s$ (which, as noted above, is $\QQ$-factorial), pulling back to $X^s$, and then extending over the boundary, which is assumed to be of codimension at least $2$. This immediately yields:

\begin{cor}[Riemann--Hurwitz]\label{C:RH-GIT}
Assume that $X^{ss}-X^{s} \subseteq X^{ss}$ and $Y-Y^s\subseteq Y$ have codimension at least $2$.
Then the same conclusion as in \Cref{L:RH-DM} holds:
\begin{align*}
K_{X^{ss}}&= q^{*}K_{Y}+\sum (|G_{R_i}|/|G_X| -1)R_i\,,
\end{align*}
where $R_i$ is the closure of $R_i^s$ in $X^{ss}$. \qed
\end{cor}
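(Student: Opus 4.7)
The plan is to reduce the statement to the already-established formula of Lemma L:RH-DM on the stable locus, using the codimension-two hypothesis to extend $\QQ$-Weil divisor classes uniquely from the stable locus to the whole of $X^{ss}$ (respectively $Y$).

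First, I would spell out why the pullback $q^*$ on $\QQ$-Weil divisors is well defined and compatible with the one already introduced for $q^s$ on $Y^s$. Since $Y-Y^s$ has codimension $\geq 2$ in the normal variety $Y$, restriction yields an isomorphism of $\QQ$-Weil divisor class groups $\mathrm{WDiv}(Y)_\QQ \xrightarrow{\sim} \mathrm{WDiv}(Y^s)_\QQ$. Similarly, because $X$ is smooth and $X^{ss}-X^s$ has codimension $\geq 2$ in $X^{ss}$, restriction gives an isomorphism $\mathrm{Pic}(X^{ss})_\QQ \xrightarrow{\sim} \mathrm{Pic}(X^s)_\QQ$, and in particular $K_{X^{ss}}$ is determined by $K_{X^s}$. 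Thus the definition $q^* := $ (extend from $Y^s$)$\circ q^{s*} \circ$ (restrict from $Y$) makes sense, is $\QQ$-linear, and agrees on $X^s$ with $q^{s*}$.

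Next, I would observe that the divisorial ramification locus $R \subseteq X^{ss}$ (the union of codimension-$1$ components of the non-minimal-stabilizer locus) is the closure of $R^s \subseteq X^s$. Indeed, each irreducible component $R_i$ of $R$ meets $X^s$ because its generic point, being of codimension $1$ in $X^{ss}$, cannot lie in the codimension-$\geq 2$ locus $X^{ss}-X^s$; hence $R_i^s := R_i \cap X^s$ is a non-empty Zariski-open subset of $R_i$ and the stabilizer of its generic point is $G_{R_i}$, matching the data in the statement.

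Finally, I would apply Lemma L:RH-DM to obtain the equality
\begin{equation*}
K_{X^s} = q^{s*}K_{Y^s} + \sum_i \bigl(|G_{R_i}|/|G_X| - 1\bigr)R_i^s
\end{equation*}
as $\QQ$-Weil divisor classes on $X^s$, and then extend this identity across $X^{ss}-X^s$. By the restriction isomorphism above, this extension is unique; on the right-hand side it replaces $q^{s*}K_{Y^s}$ with $q^*K_Y$ (by construction of $q^*$) and replaces $R_i^s$ with its closure $R_i$ in $X^{ss}$, while the left-hand side extends to $K_{X^{ss}}$. This yields exactly the asserted formula.

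There is essentially no obstacle here beyond verifying that the codimension-two hypothesis interacts cleanly with both the definition of $q^*$ and the closure operation on the ramification components; the substantive Riemann--Hurwitz content has already been dispatched in Lemma L:RH-DM via the Luna Slice Theorem.
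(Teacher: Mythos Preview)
Your proposal is correct and follows exactly the approach the paper takes: the paragraph immediately preceding the corollary defines $q^*$ by restricting to $Y^s$, applying $q^{s*}$, and extending over the codimension-$2$ boundary, after which the formula is declared immediate from Lemma~\ref{L:RH-DM}. You have simply spelled out the details (the restriction isomorphisms on $\QQ$-Weil divisor classes, and that each $R_i$ meets $X^s$ since its generic point cannot lie in the codimension-$\geq 2$ strictly semi-stable locus) that the paper leaves implicit in the \qed.
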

\begin{rem}
The codimension at least~$2$ hypothesis above does rule out some standard GIT constructions. For instance, \Cref{C:RH-GIT} is not applicable in the case of the GIT moduli space of cubic curves $\GIT_\curv$, as the locus of strictly semi-stable cubic curves is the locus of $A_1$ cubic curves, which is a codimension~$1$ locus in the semi-stable locus in the Hilbert scheme $\PP^{9}=\PP H^0(\PP^2,\calO_{\PP^2}(3))$. Of course $\GIT_\curv$ is simply equal to $\PP^1$, so this particular situation is trivial.
\end{rem}

\subsection{Canonical bundle of the Kirwan blowup}
Here we consider a step in the Kirwan blowup process:
$$
\xymatrix{
F\ar@{^(->}[rr] \ar[d]&&\widetilde X^{ss}\ar[rr]^{\tilde \pi} \ar[d]^{\tilde q} &&X^{ss} \ar[d]^q &\\
E\ar@{^(->}[rr]&&\widetilde Y= \widetilde X/\!\!/_{\tilde L}G \ar[rr]^{\pi}&&Y= X/\!\!/_LG&\\
}
$$
We refer the reader to Kirwan's various papers on the topic for the details on Kirwan blowups, or to our paper \cite{cohcubics} for a summary. For the convenience of the reader, recall that this includes the data of $\Stab^\circ\le G$, a maximal dimensional connected component of a stabilizer, and the associated locus
$$
Z_{\Stab^\circ}^{ss}\coloneqq \{x\in X^{ss}: \Stab^\circ \text { fixes } x\}\,,
$$
which is a smooth closed subvariety of $X^{ss}$. Note that in Kirwan's papers and in \cite{cohcubics}, $\Stab^\circ$ is denoted by ``$R$''; this would conflict with our notation here, that $R$ is the Eckardt (ramification) divisor, and so we use the (possibly) more transparent $\Stab^\circ$ in the current paper.

\begin{lem}\label{L:KKirw}
Assume that $X^{ss}-X^{s} \subseteq X^{ss}$ and $Y-Y^s\subseteq Y$ are codimension at least $2$.
Let $x\in Z_{\Stab^\circ}^{ss}$ be a general point, let $\calN_x$ be the fiber of the normal bundle to $G\cdot Z_{\Stab^\circ}^{ss}$ in $X^{ss}$ at $x$, let $\ell_x\subseteq \calN_x$ be a general line through the origin, and let $c=\codim_X (G\cdot Z_{\Stab^\circ}^{ss})$. Denote by $\widetilde R_i$ the strict transform of the ramification divisor $R_i$ in $X^{ss}$, denote by $G_{F}\subseteq G_x$ the stabilizer of the general line $\ell_x$, and denote by $\mu_i$ the coefficient defined by $\tilde \pi ^* R_i=\widetilde R_i+\mu_i F$. In terms of these invariants, the canonical bundles admit the following expressions:
\begin{align}
\label{E:LKKirw1}
K_{\widetilde X^{ss}}&= \tilde q^{*}K_{\widetilde Y}+\sum (|G_{ R_i}|/|G_X| -1)\widetilde R_i + (|G_{F}|/|G_X| -1)F = \tilde \pi^* K_{X^{ss}} + (c-1) F\,, \\
\label{E:LKKirw3}
K_{\widetilde Y}& = \pi ^*K_Y+\left(\frac{c+\sum (|G_{ R_i}|/|G_X| -1)\mu_i }{|G_F|/|G_X|}-1\right)E, \ \ \ \text{ if $Y$ is $\QQ$-Gorenstein.}
\end{align}
\end{lem}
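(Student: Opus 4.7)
The plan is to derive both equalities in~\eqref{E:LKKirw1} from two independent standard facts, and then to combine them with a ramification identity to obtain~\eqref{E:LKKirw3}. The key structural observations are that $\tilde\pi$ is a genuine smooth blowup (the orbit $G\cdot Z_{\Stab^\circ}^{ss}$ is smooth of codimension $c$ in the smooth ambient $X^{ss}$), and that, on the relevant open locus, the strictly semi-stable locus remains of codimension at least $2$ on both $\widetilde X^{ss}$ and $\widetilde Y$, so that \Cref{C:RH-GIT} continues to apply after the blowup.

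For the first equality in~\eqref{E:LKKirw1}, I would apply \Cref{C:RH-GIT} to $\tilde q\colon\widetilde X^{ss}\to\widetilde Y$. The divisorial ramification of this cover consists of the strict transforms $\widetilde R_i$, whose generic stabilizers remain $G_{R_i}$ (blowing up a subvariety not containing a generic point of $R_i$ does not alter that point's stabilizer), together with the new exceptional divisor $F$. By the Luna Slice Theorem, a neighborhood of $x\in Z_{\Stab^\circ}^{ss}$ in $X^{ss}$ is modeled on the $G$-bundle associated to the normal space $\calN_x$, whence a general point of $F$ corresponds to a general line $\ell_x\subseteq\calN_x$ and has stabilizer $G_F$ by definition. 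For the second equality, $\tilde\pi$ is the blowup of the smooth variety $X^{ss}$ along the smooth subvariety $G\cdot Z_{\Stab^\circ}^{ss}$ of codimension $c$, so the standard formula gives $K_{\widetilde X^{ss}}=\tilde\pi^*K_{X^{ss}}+(c-1)F$.

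To obtain~\eqref{E:LKKirw3}, expand $\tilde\pi^*K_{X^{ss}}$ using \Cref{C:RH-GIT} applied to $q\colon X^{ss}\to Y$ (where the $\QQ$-Gorenstein hypothesis on $Y$ is used to make $q^*K_Y$ well-defined), and substitute $\tilde\pi^*R_i=\widetilde R_i+\mu_iF$ together with the commutativity $\tilde\pi^*q^*=\tilde q^*\pi^*$ of pullback along the Cartesian square. Equating the two expressions for $K_{\widetilde X^{ss}}$ coming from~\eqref{E:LKKirw1} and cancelling the common contribution $\sum(|G_{R_i}|/|G_X|-1)\widetilde R_i$ produces an identity of the form $\tilde q^*(K_{\widetilde Y}-\pi^*K_Y)=\beta F$ for an explicit $\beta=\sum(|G_{R_i}|/|G_X|-1)\mu_i+c-|G_F|/|G_X|$. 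Finally, the ramification relation $\tilde q^*E=(|G_F|/|G_X|)F$ lets me divide out and solve for the coefficient of $E$ in $K_{\widetilde Y}-\pi^*K_Y$, which yields precisely the formula in~\eqref{E:LKKirw3}.

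The main obstacle is the careful justification of the two local identities at the divisor $F$, namely the identification of the generic stabilizer along $F$ with $G_F$, and the ramification index $|G_F|/|G_X|$ relating $\tilde q^*E$ to $F$. Both reduce to an explicit analysis of the $G_x$-action on the normal space $\calN_x$ provided by the Luna Slice Theorem: projectivizing $\calN_x$, the exceptional divisor $F$ is obtained as a $G$-quotient of an open subset of $G\times^{G_x}\PP(\calN_x)$, and the generic isotropy on $\PP(\calN_x)$ under $G_x$ is $G_F$ by construction. A secondary point to verify is that the strictly semi-stable locus on $\widetilde X^{ss}$ is still of codimension at least $2$; in the one-step situation relevant to cubic surfaces (where the unique strictly polystable orbit is blown up) this is immediate, but some care would be needed to state a fully general version of the lemma.
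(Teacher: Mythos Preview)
Your proposal is correct and follows essentially the same approach as the paper: both derive the first equality in~\eqref{E:LKKirw1} from \Cref{C:RH-GIT} applied to $\tilde q$ (identifying the generic stabilizer along $F$ with $G_F$ via the projectivized normal space), the second from the blowup formula, and then combine these with the ramification identity $\tilde q^*E=(|G_F|/|G_X|)F$ and the commutativity $\tilde\pi^*q^*=\tilde q^*\pi^*$ to solve for the discrepancy coefficient in~\eqref{E:LKKirw3}. Your explicit computation of $\beta$ and the subsequent division match the paper's algebra step for step.
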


\begin{proof}
The first equality in~\eqref{E:LKKirw1} just follows from \Cref{C:RH-GIT}, using that $F$ is the projectivized normal bundle to the orbit $G\cdot Z^{ss}_{\Stab^\circ}\subseteq X^{ss}$, and the stabilizer of a generic point of $F$ is therefore the stabilizer of the projectivized normal space at a generic point. The second equality in~\eqref{E:LKKirw1} is just the formula for the canonical bundle of the blowup of a smooth variety along a smooth subvariety.

For~\eqref{E:LKKirw3}, we use~\eqref{E:LKKirw1}. Indeed, substituting the expressions $K_{\widetilde Y}= \pi ^*K_Y +a(E,Y,0)E$ and $K_{X^{ss}}= q^* K_Y+ \sum (|G_{ R_I}|/|G_X| -1)R_i$, we obtain
\begin{align*}
K_{\widetilde X^{ss}}&= \tilde q^{*}\pi ^*K_{Y}+ a(E,Y,0)\left( |G_F|/|G_X| \right) F+\sum (|G_{ R_i}|/|G_X| -1)\widetilde R_i + (|G_{F}|/|G_X| -1)F\\
&= \tilde q^{*} \pi ^*K_{Y}+ \sum (|G_{ R_i}|/|G_X| -1)\widetilde R_i + \left((a(E,Y,0)+1)|G_{F}|/|G_X| -1\right)F\\
K_{\widetilde X^{ss}}& = \tilde \pi^* q^*K_{Y} + \sum (|G_{ R_i}|/|G_X| -1)\widetilde R_i +\sum (|G_{ R_i}|/|G_X| -1)\mu_i F+ (c-1) F \\
&= \tilde \pi^* q^*K_{Y} + \sum (|G_{ R_i}|/|G_X| -1)\widetilde R_i + \left(c-1 +\sum (|G_{ R_i}|/|G_X| -1)\mu_i \right)F\,.
\end{align*}
Solving for $a(E,Y,0)$ gives the result.
\end{proof}

\begin{rem}
In order to effectively compute invariants on a Kirwan blowup (e.g., cohomology, canonical bundles, etc.), it is useful to be able to make computations directly on $X$, where one in principle has good control of the geometry and group action (as opposed to on the blowups of $X$). The invariants~$\mu_i$ and~$|G_F|$ in \Cref{L:KKirw} are chosen for this reason; note that these can be computed at general points of the strata in $X$ in Kirwan's setup for the Kirwan blowup.
\end{rem}

\begin{rem}\label{R:S-BGLM}
From \eqref{E:LKKirw3}, one can see that $\QQ$-Gorenstein GIT quotients, in our restricted setup, have klt singularities.  This is a special case of a much more general result due to Schoutens \cite[Thm.~2, p.358]{schoutens2005}.  We also note that one can conclude from this that certain moduli spaces of $K$-stable Fano {\em manifolds} are klt; this is again a special case of much deeper results of Braun et al.~\cite{braun_etal} and \cite{LWX18}.
\end{rem}

\begin{rem}[Kirwan blowups with boundary]\label{R:KBBdry}
Still under the assumption that  $X^{ss}-X^{s} \subseteq X^{ss}$ and $Y-Y^s\subseteq Y$ are codimension at least $2$, if we consider the case of a boundary $(Y,\Delta_Y)$ and assume that $K_Y+\Delta_Y$ is $\QQ$-Cartier, then setting  $\Delta_{X^{ss}}\coloneqq q^*\Delta_Y$, and letting  $\Delta_{\widetilde Y}$ be the strict transform of $\Delta_Y$ in $\widetilde Y$, we have
 \begin{align}
\label{E:LKKirwDelta}
K_{\widetilde Y}+\Delta_{\widetilde Y}& = \pi ^*(K_Y+\Delta_Y)+\left(\frac{a(F,X^{ss},\Delta_{X^{ss}})+1+\sum (|G_{ R_i}|/|G_X| -1)\mu_i }{|G_F|/|G_X|}-1\right)E\,.
\end{align}
 Indeed, setting $\Delta_{\widetilde X^{ss}}$ to be the strict transform of $\Delta_{X^{ss}}$ in $\widetilde X^{ss}$,
 we have $\tilde q^*\Delta_{\widetilde Y}= \Delta_{\widetilde X^{ss}}$;  in our situation, both the strict transform and pullback are defined by restricting to the locus where the morphisms are either isomorphisms or \'etale, and then taking closures, and so the strict transform and pullback commute. Then the same analysis as above using
 $$K_{\widetilde Y}+\Delta_{\widetilde Y}= \pi ^*(K_Y+\Delta_Y) +a(E,Y,\Delta_Y)E$$
 $$K_{\widetilde X^{ss}}+\Delta_{\widetilde X^{ss}}=\tilde \pi^*(K_{X^{ss}}+\Delta_{X^{ss}}) +a(F,X^{ss},\Delta_{X^{ss}})F$$
$$K_{X^{ss}}+\Delta_{X^{ss}}= q^*(K_Y+\Delta_Y)+ \sum (|G_{ R_i}|/|G_X| -1)R_i$$
gives \eqref{E:LKKirwDelta}. Note that  from~\eqref{E:LKKirwDelta}, it follows that if $(X^{ss}, \Delta_{X^{ss}} = q^*\Delta_Y)$ is klt, then so is $(Y,\Delta_Y)$; we emphasize that we started by assuming $K_Y+\Delta_Y$ was $\QQ$-Cartier.
\end{rem}

\subsection{Computation of  $K_{\MK}$}\label{SS:KMK}
We now  specialize the general discussion of the previous section to the particular case of the moduli of cubic surfaces. We want to apply~\Cref{C:RH-GIT} to compute $K_{\GIT}$, and then $K_{\MK}$.
To do this for $\GIT$, recall that the locus of unstable points in $\PP^{19}$ has codimension $\ge 2$, and the locus of strictly semi-stable points has codimension $\ge 2$ in the semi-stable locus, both in $(\PP^{19})^{ss}$ as well as in $\GIT$. Similarly, for $\MK$, the locus of unstable points in the blowup of $(\PP^{19})^{ss}$ has codimension $\ge 2$, and the locus of strictly semi-stable points has codimension $\ge 2$ within the semi-stable locus, both in the blowup of $(\PP^{19})^{ss}$ as well as in $\MK$ (see~\cite{cohcubics}). We are thus in the setup of the previous Section.

Recall from \Cref{sec:calM} that the locus of cubics in $\GIT$ with non-trivial stabilizers is the irreducible Eckardt divisor~$R$, and the cubic surface parameterized by a general point of~$R$ has automorphism group~$\ZZ_2$. Thus, the Riemann--Hurwitz formula (\Cref{C:RH-GIT}) in this case gives
$$
K_{(\PP^{19})^{ss}} = q^* K_{\GIT} + R\,.
$$
The class of the Eckardt divisor is known (see \Cref{rem:Eckardt}):

$R=\calO_{\PP^{19}}(100)$, where $\calO_{\PP^{19}}(1)$ is the restriction to $(\PP^{19})^{ss}$ of the hyperplane section in $\PP^{19}$. Thus we have
$$
-\calO_{\PP^{19}}(20) = q^*K_{\GIT} + \calO_{\PP^{19}}(100)\,,
$$
giving
$$
q^*K_{\GIT} = -\calO_{\PP^{19}}(120)\,.
$$
Recall also (e.g., \Cref{rem:Eckardt}) that
the discriminant has class $\calO_{\PP^{19}}(32)$ in our situation.
In other words, as the discriminant in $(\PP^{19})^{ss}$ descends to give the Weil divisor $D_{A_1}$ on $\GIT$, the divisor $\calO_{\PP^{19}}(1)$ descends to $\GIT$ (as a $\QQ$-divisor) to give the $\QQ$-Cartier divisor $\frac{1}{32}D_{A_1}$.
 This finally gives
\begin{equation}\label{E:KGITram}
K_{\GIT} = -\frac{120}{32}D_{A_1} = -\frac{15}{4}D_{A_1}\,,
\end{equation}
agreeing with the computation in~\eqref{E:rat2}.

We now compute $K_{\MK}$ using the same general machinery. Recall that the Kirwan desingularization for the case of cubic surfaces is obtained by a single blowup, supported at the point $\Delta_{3A_2}\in\GIT$ corresponding to the orbit of the $3A_2$ cubic surface~$S_{3A_2}$.

\begin{cor}\label{E:MK_can}
\begin{equation*}
K_{\MK} = \pi^* K_{\GIT} + 20 D_{3A_2}\,.
\end{equation*}
\end{cor}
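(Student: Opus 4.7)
The plan is to apply the general discrepancy formula \eqref{E:LKKirw3} of \Cref{L:KKirw} directly, with $X = (\PP^{19})^{ss}$, $Y = \GIT$, $\widetilde Y = \MK$, and $G = \SL(4,\CC)$. The codimension hypotheses for the lemma were already verified in the discussion immediately preceding the statement: the unstable locus in $\PP^{19}$ and the strictly semi-stable locus in $(\PP^{19})^{ss}$ both have codimension at least two, and $\GIT$ is $\QQ$-Gorenstein (indeed $\QQ$-factorial) as it is the weighted projective space $\PP(1,2,3,4,5)$. Only the single ramification divisor $R$ enters the sum: by the discussion leading to \eqref{E:KGITram}, the generic cubic surface has trivial automorphisms so $G_X = \mu_4$, while the generic Eckardt surface has a $\ZZ_2$ automorphism, and the pullback formula $K_{(\PP^{19})^{ss}} = q^*K_{\GIT}+R$ forces $|G_R|/|G_X|-1 = 1$.

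Next I would assemble the four remaining invariants. For the codimension $c$, the orbit $\SL(4,\CC)\cdot[S_{3A_2}]\subseteq \PP^{19}$ has dimension $\dim\SL(4,\CC) - \dim\Stab(S_{3A_2}) = 15 - 2 = 13$ using \Cref{L:R3A2Norm2}(4), so $c = 19 - 13 = 6$. The multiplicity of the Eckardt divisor along the $3A_2$ orbit is $\mu_R = 15$ by \Cref{L:Eck-no-3A2}. Finally, the stabilizer $G_F$ of a general line in the normal space is computed by \Cref{P:stab--ex}(1): for a generic point of the exceptional divisor $D_{3A_2}$, the stabilizer is exactly the diagonal $\mu_4 \subseteq \SL(4,\CC)$, which coincides with $G_X$, so $|G_F|/|G_X| = 1$.

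Substituting into \eqref{E:LKKirw3} then yields the discrepancy
\begin{equation*}
a = \frac{c + (|G_R|/|G_X|-1)\mu_R}{|G_F|/|G_X|} - 1 = \frac{6 + 1\cdot 15}{1} - 1 = 20,
\end{equation*}
which gives the claimed formula $K_{\MK} = \pi^*K_{\GIT} + 20\,D_{3A_2}$.

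There is no real obstacle once the inputs of \Cref{L:KKirw}, \Cref{L:R3A2Norm2}, \Cref{L:Eck-no-3A2}, and \Cref{P:stab--ex} are in hand; the only point requiring a moment of care is the identification $G_F = G_X = \mu_4$, which one should emphasize to explain why the denominator $|G_F|/|G_X|$ is $1$ rather than something larger. In particular, although generically a line in a Luna slice would have only the center as its stabilizer, one must check via \Cref{P:stab--ex}(1) that no extra finite elements of $\Stab(S_{3A_2})$ fix a generic direction in the normal space — this is precisely the content of that proposition.
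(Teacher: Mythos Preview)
Your proof is correct and follows essentially the same approach as the paper: both apply formula \eqref{E:LKKirw3} of \Cref{L:KKirw} with the inputs $c=6$, $\mu=15$ from \Cref{L:Eck-no-3A2}, $|G_R|/|G_X|=2$, and $|G_F|/|G_X|=1$ from \Cref{P:stab--ex}(1), arriving at the discrepancy $6+15-1=20$. The only cosmetic difference is that you compute $c=6$ via $\dim\SL(4,\CC)-\dim\Stab(S_{3A_2})$, whereas the paper reads it off from the dimension of the Luna slice in \Cref{L:3A2slice}.
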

\begin{proof}
As discussed above, we are in a situation where we can employ \Cref{L:KKirw} to compute the canonical bundle of $K_{\MK}$ via the blow-up of $(\mathbb P^{19})^{ss}$ along the orbit of the $3A_2$ cubic surface. 
The formula \eqref{E:LKKirw3} of \Cref{L:KKirw} states that $K_{\MK} = \pi^* K_{\GIT} +\left(\frac{c+|G_{R}|/|G_X| -1)\mu}{|G_{3A_2}|/|G_X|}-1\right)D_{3A_2}$, where $G_X=\mu_4$ is the stabilizer of a general point of $\mathbb P^{19}$, $G_R$ is the stabilizer of a general point of the Eckardt divisor, $G_{3A_2}$ is the stabilizer of a general point of the exceptional divisor,  $c$ is the codimension of the $3A_2$ orbit, and $\mu$ is the multiplicity of the Eckardt locus along the $3A_2$ orbit in the sense of \Cref{L:KKirw}.  As a general Eckardt cubic surface has a $\mathbb Z_2$ automorphism group, we have that $|G_{R}|/|G_X|=2$.  \Cref{P:stab--ex}(1) states that $G_{3A2}=G_X=\mu_4$.
In other words,  we have 
$K_{\MK} = \pi^* K_{\GIT} +(c+\mu -1)D_{3A_2}$. 
Since we also have $c=6$ (e.g., \Cref{L:3A2slice}) and $\mu=15$ by  \Cref{L:Eck-no-3A2},  the result  follows.
\end{proof}

\begin{rem}
The coefficient $20$ for $D_{3A_2}$ (or at least the fact that the coefficient is divisible by $5$) is crucial for our computation and proof of non-$K$-equivalence. Note that it follows immediately that while $\pi :\MK\to \GIT$ is a blowup supported at the smooth point $\Delta_{3A_2}\in \GIT$, it is {\em not} the standard blowup of the point, since otherwise $K_{\MK}$ would equal~$\pi ^*K_{\GIT}+3D_{3A_2}$.
\end{rem}

\subsection{Intersection numbers for divisors on DM stacks}
 On DM stacks, the top self-intersection numbers of canonical classes are rational numbers. We need the following statement regarding the denominators that may appear, which is essentially \cite[Prop.~2.1.1]{AGV08}:
\begin{pro}\label{P:AGV}
Let $\calX$ be a smooth proper DM stack over a  field $K$ of characteristic $0$ with coarse moduli space $\Phi:\calX\to X$ of dimension $d$. For each geometric point $p:\Spec \overline K \to \calX$, denote by~$e_p$ the exponent of the automorphism group of~$p$, and denote by~$e$ the least common multiple of the numbers~$e_p$ for all geometric points of~$\calX$. For any divisor classes $D_1,\dots,D_d\in \operatorname{CH}_{d-1}(X)$, we have
$$
D_1\cdots D_d \in \frac{1}{e^d}\ZZ\,.
$$
\end{pro}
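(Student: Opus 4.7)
The plan is to exploit the coarse moduli map $\Phi:\calX\to X$ to reinterpret the intersection numbers as intersections of $\QQ$-Cartier divisors on $X$, and then to show that $e\cdot D_i$ is an \emph{actual} (integral) Cartier divisor on $X$ for each $i$. Once that is established, multilinearity yields the bound on denominators immediately: since $(eD_1)\cdots(eD_d)$ is a top intersection of Cartier divisors on the proper scheme $X$, it is an integer, hence $D_1\cdots D_d=e^{-d}(eD_1)\cdots(eD_d)\in e^{-d}\ZZ$.

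The main content is thus the following claim: for every line bundle $L\in \Pic(\calX)$, the power $L^{\otimes e}$ is the pullback of a line bundle from $X$. Accepting this for a moment, Step~1 is to observe that any Weil divisor class $D\in \operatorname{CH}_{d-1}(X)$ pulls back canonically to a Weil divisor on $\calX$ (by pulling back on the open locus where $\Phi$ is an isomorphism and extending by Zariski closure), and since $\calX$ is smooth, this Weil divisor is automatically Cartier; call the corresponding class $\Phi^*D\in \Pic(\calX)$. Applying the claim to $L=\Phi^*D_i$ shows that $(\Phi^*D_i)^{\otimes e}=\Phi^*(eD_i)$ is pulled back from a bona fide line bundle on $X$, which is to say that $eD_i$ is Cartier on $X$, completing the argument.

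The key step, then, is to prove the descent claim. This will be a local statement at each geometric point $p$ of $\calX$. Near such a point, étale-locally $\calX$ is of the form $[U/G_p]$ where $G_p=\Aut(p)$ and the map to the coarse space is the GIT quotient $U\to U/G_p$. A line bundle $L$ on $[U/G_p]$ is the same data as a $G_p$-equivariant line bundle on $U$, and restricting to the fixed point gives a character $\chi_p:G_p\to \mathbb{G}_m$. Since $e$ is a multiple of the exponent $e_p$, we have $\chi_p^{e}=1$, so $L^{\otimes e}$ has trivial character at $p$. A $G_p$-equivariant line bundle with trivial character at a fixed point is equivariantly trivial on an invariant open neighborhood, hence descends to the coarse quotient. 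Globally, one concludes by the standard criterion that a line bundle on a DM stack descends to the coarse moduli space if and only if its character at every geometric point is trivial.

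The main potential obstacle is this descent criterion, which requires a bit of care: one must check that the local trivializations assemble consistently, which is a straightforward application of faithfully flat descent once the pointwise character obstruction vanishes. We note also that the argument in fact uses only the exponent of the abelianization $G_p^{\mathrm{ab}}$ (since only characters of $G_p$ appear), which divides $e_p$, so the stated bound is valid and (for cyclic stabilizers, which is the case we will care about in applications) sharp.
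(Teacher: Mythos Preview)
Your proposal is correct and follows essentially the same approach as the paper: pull back each $D_i$ to a line bundle on $\calX$, argue that its $e$-th power descends to a line bundle on $X$, and conclude by multilinearity that $(eD_1)\cdots(eD_d)\in\ZZ$. The only difference is that the paper cites the descent statement (that $\calL^{\otimes e}$ descends to $X$) as a black box from \cite[Lem.~2.1.2]{AGV08}, whereas you sketch its proof via the local character obstruction; your extra detail is fine and the overall argument is the same.
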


\begin{proof}
We first recall that for line bundles $M_1,\dots,M_d$ on $X$, with divisor classes $[M_1], \dots, [M_d]\in \operatorname{CH}_{d-1}(X)$, we have by definition
$$
[M_1]\cdots [M_d] \coloneqq  \int_X c_1(M_1) \cdots c_1(M_d)\cap [X] \in \ZZ \,.
$$
Using the fact that $X$ is $\QQ$-factorial, this allows us to define the intersection number $D_1\cdots D_d$ as a rational number for any divisor classes $D_1,\dots,D_d\in \operatorname{CH}_{d-1}(X)$.

To prove the bound on the denominators, we argue as follows. For any divisor $D\in \operatorname{CH}_{d-1}(X)$, the pullback $\Phi^*D$ is the divisor class associated to some line bundle $\calL$ on $\calX$ (given an \'etale presentation $P:U\to \calX$, the pullback $P^* \Phi^*D$ is the class of a line bundle on $U$, which descends to $\calX$). The fact that $\calL^{\otimes e}$ descends to a line bundle $M$ on $X$ is \cite[Lem.~2.1.2]{AGV08}. Thus we have $(eD_1) \cdots (e D_d) \in \ZZ$, completing the proof.
\end{proof}

\section{Proof of the non-$K$-equivalence} \label{sec:proofnonKequiv}
We can now conclude that $\MK$ and $\oBG$ are not $K$-equivalent, which is one of our main results. All the work for this proof has been already done, and we just gather the pieces here. We recall that the top self-intersection numbers of the canonical class on $K$-equivalent varieties are equal --- this follows from say \cite[Ch.~IV App.~Prop.~2.11, p.296]{kollarRC}, which implies that if $f:Y\to X$ is a morphism of schemes of dimension $d$ over a field, such that $f_*(\calO_Y) = \calO_X$, and $D_i$ are $\QQ$-Cartier divisors on $X$,  then $f^*D_1\cdots f^*D_d = D_1\cdots D_d$.
Consequently, given two $K$-equivalent birational normal projective $\QQ$-Gorenstein varieties $X$ and $Y$ of dimension $d$,  one has from the definition of $K$-equivalence that, in the notation of diagram \eqref{E:Kequiv}, $K_X^d=g^*K_X^d= K_Z^d=h^*K_Y^d=K_Y^d$.

We will thus prove \Cref{T:mainNonK} by showing that the top self-intersection numbers of the canonical classes on the Kirwan and toroidal compactifications are not equal to each other. Showing that the top self-intersection numbers of $K_{\MK}$ and of $K_{\oBG}$ are not equal to each other
is greatly simplified by the fact that both of these spaces admit blowdown maps ($\pi$ and $p$ respectively) to the same space $\BBG=\GIT$, with the exceptional divisors of $\pi$ and $p$ both contracted to a single point~$\Delta_{3A_2}$. Thus, as computed above, the canonical class in each case is the pullback of the canonical bundle $K_{\GIT}$ plus the exceptional divisor of the blowup with a suitable multiplicity. Since the exceptional divisors of $\pi$ and $p$ are both contracted to a point, the top self-intersection numbers of $K_{\MK}$ and $K_{\oBG}$ will each equal to the top self-intersection number of $K_{\GIT}$ plus a suitable multiple of the top self-intersection number of the corresponding exceptional divisor.

\begin{proof}[Proof of \Cref{T:mainNonK}]
By the above discussion, it suffices to show that $K_{\MK}^4\ne K_{\oBG}^4$.
To this end, recall from \Cref{E:MK_can} and \Cref{P:Tor_can} that
\begin{align*}
K_{\MK}& = \pi^* K_{ \GIT}+20D_{3A2}\,,\\
K_{\oBG} & = p^* K_{\GIT}+16T_{3A_2}\,.
\end{align*}
The top self-intersection number for the latter is then
$$
K_{\oBG}^4 =K_{\GIT}^4-\tfrac{16^4}{6^3}
$$
by \Cref{self-t3a2}.
We then compute
\begin{equation}
K_{\MK}^4= K_{\GIT}^4+20^4D_{3A_2}^4=K_{\GIT}^4+5^42^8D_{3A_2}^4\,.
\end{equation}
By \Cref{P:AGV} we have $D_{3A_2}^4\in \frac{1}{e}\ZZ$ for some $e$ that is not divisible by $5$; indeed, $D_{3A_2}^4= (D_{3A_2}|_{D_{3A_2}})^3$, where we are using that $\MK$ is $\QQ$-factorial to define the restriction. By \Cref{P:stab--ex}, no stabilizers along $D_{3A_2}$ have order divisible by $5$. Consequently,
the two intersection numbers $K_{\MK}^4$ and $K_{\oBG}^4$ cannot be the same since if we write $5^42^8D_{3A_2}^4$ as a product of nonzero powers of distinct primes, $5$ will appear with positive exponent, while it does not appear with positive exponent in $-\tfrac{16^4}{6^3}=-\tfrac{2^{13}}{3^3}$.
\end{proof}

\section{Proof of equality in the Grothendieck ring of varieties} \label{sec:proofLequiv}
In this section we show that $\MK$ and $\oBG$ have the same class in the Grothendieck ring of varieties, which is our last main result to be proven.  Due to the fact that moduli spaces of cubic surfaces have no odd degree cohomology, this will turn out to give another, conceptual rather than computational, proof that these two compactifications have the same cohomology (\Cref{C:BettiEq}).

\begin{proof}[Proof of \Cref{T:mainL}]
Since $\MK- D_{3A_2}\cong \oBG- T_{3A_2}$, by additivity of the class of the disjoint union of varieties in the Grothendieck ring, it suffices to show that the exceptional divisors $D_{3A_2}$ and $T_{3A_2}$, as varieties, are equivalent in the Grothendieck ring. Since we have seen that $T_{3A_2}$ is isomorphic to $\PP^3$, it thus suffices to show that $D_{3A_2}$ is equivalent to $\PP^3$ in the Grothendieck ring.  This is accomplished below in \Cref{L:D3A2class} (for an alternative approach and proof, see \Cref{L:D3A2toric} and \Cref{R:D3A2class}).
\end{proof}

We now describe explicitly the geometry of the exceptional divisor $D_{3A_2}\subseteq \MK$, by thinking of it as the GIT quotient of the exceptional divisor $\PP^5$ of the blowup of the Luna slice by the
group $\Aut(S_{3A_2})$. Recall that $\Aut(S_{3A_2})$ is the semidirect product of $\TT^2$ and $S_3$, as described in~\Cref{L:R3A2Norm2}. Furthermore, the~$\CC^6$ Luna slice is
described in~\Cref{L:3A2slice}, and the action of $\Aut(S_{3A_2})$ on it is given by~\eqref{E:LunaAction}. We are interested in the action of $\Aut(S_{3A_2})$ on the exceptional divisor
of the
blowup $\Bl_0\CC^6\to\CC^6$ of the Luna slice at the origin. Recall that the action of~$S_3$ on the exceptional~$\PP^5$ is simply by permuting the homogeneous
coordinates $(T_0:T_1:T_2:T_\hzero:T_\hone:T_\htwo)$ pairwise. Our goal is to describe all semi-stable orbits, up to the action of~$S_3$. Recall that the set of unstable orbits has already been determined in \Cref{L:LunaStability}.

We will describe sets of various orbits of the same type, where by type we will simply mean which of the homogeneous coordinates~$T$ vanish (and we will write $0$ for them), and which of the coordinates~$T$ do not vanish (and we will write $*$ for an arbitrary non-zero complex number). For such a given type of an orbit, we will parameterize the orbits of such a type by a number of copies of $\CC^*$, for the non-zero coordinates, quotiented by a finite group. Indeed, for all orbits of a given type we can set one of the non-zero (labeled $*$) homogeneous coordinates $T$ to be equal to 1 by projectivizing, and then set two more of the non-zero homogeneous coordinates to be
equal to 1 by acting by $\TT^2\subseteq\Aut(S_{3A_2})$. However, a given $\TT^2$ orbit may contain more than one point where these three chosen coordinates are equal to 1, due to finite stabilizers that we will describe in detail in~\Cref{L:D3A2class} below.

To enumerate possible types of orbits, up to the~$S_3$ action, we will always permute the
coordinates $T_0,T_1,T_2$ in such a way that all zeroes precede all non-zero coordinates. If possible to do so while keeping this condition for $T_0,T_1,T_2$, we will further permute $T_\hzero,T_\hone,T_\htwo$ to also put all zeroes before all the non-zero coordinates.
Recalling from~\Cref{L:LunaStability} that semi-stable points on $\PP^5$ are those where $T_i\ne 0$ or $T_{\widehat{i}}\ne 0$, for each $i=0,1,2$, we thus enumerate the types of
semi-stable (in fact all of them stable)~$\TT^2$ orbits on~$\PP^5$, and their contributions to the class $[D_{3A_2}]$ in the Grothendieck ring of varieties in \Cref{F:1} (where we number the types of orbits, for easy reference).

\begin{table}[h]
\begin{tabular}{r|ccc|ccc|l}
\hline
Number&$T_0$&$T_1$&$T_2$&$T_\hzero$&$T_\hone $&$T_\htwo $&Contribution to $[D_{3A_2}]$\\
\hline
(1)&$*$&$*$&$*$   &$*$&$*$&$*$&preserved setwise by~$S_3$, so $[(\CC^*)^3/S_3]$\\
(2)& 0 &$*$&$*$   &$*$&$*$&$*$&preserved setwise by $1\leftrightarrow 2$, so $[(\CC^*)^2/S_2]$ \\
(3)&$*$&$*$&$*$  & 0 &$*$&$*$&preserved setwise by $1\leftrightarrow 2$, so $[(\CC^*)^2/S_2]$ \\
(4)& 0 & 0 &$*$   &$*$&$*$&$*$&$[\CC^*]$ (preserved setwise  by $0\leftrightarrow 1$)\\
(5)& 0 &$*$&$*$   &$*$&$0$&$*$&$[\CC^*]$ \\
(6)&$*$&$*$&$*$   & 0 & 0 &$*$&$[\CC^*]$ (preserved setwise  by $0\leftrightarrow 1$)\\
(7)& 0 & 0 & 0    &$*$&$*$&$*$&$[\CC^0]$\\
(8)& 0 & 0 &$*$  &$*$&$*$& 0 &$[\CC^0]$\\
(9)& 0 & 0 &$*$  &$*$&$*$& 0 &$[\CC^0]$\\
(10)&$*$&$*$&$*$  & 0 & 0 & 0 &$[\CC^0]$\\
\hline
\end{tabular}
\caption{Stable~$\TT^2$ orbits on~$\PP^5$}\label{F:1}
\end{table}

Here we note that the quotient of $\CC^*$ by any finite subgroup is still a $\CC^*$, so that contributions from (4) and (6) are equal to $[\CC^*]$. 
Thus, to compute the
class of $D_{3A_2}$ in the Grothendieck ring of varieties it remains to compute the classes of the higher-dimensional quotients, for which we need to understand the actions of~$S_2$ and~$S_3$ appearing in 
the orbits of types (1),(2),(3) above.
\begin{lem}\label{L:D3A2class}
The divisor $D_{3A_2}\subseteq \MK$ is equivalent to $\PP^3$ in the Grothendieck ring of varieties.
\end{lem}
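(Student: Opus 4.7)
The plan is to compute $[D_{3A_2}]$ via the stratification of \Cref{F:1}, which partitions the semi-stable locus $(\PP^5)^{ss}$ into ten $\Aut(S_{3A_2})$-invariant locally closed strata indexed by $S_3$-orbits of the vanishing pattern of the homogeneous coordinates~$T$. Since $D_{3A_2} = (\PP^5)^{ss}/\Aut(S_{3A_2})$, additivity in the Grothendieck ring will reduce the computation to summing the ten entries in the rightmost column of the table.

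To evaluate the entries I will use the Grothendieck-ring identity $[(\CC^*)^n/S_n] = [\Sym^n \CC^*] = \LL^{n-1}(\LL-1)$, which follows from the fact that the elementary symmetric polynomials identify $\Sym^n \CC^*$ with $\{(e_1,\ldots,e_n)\in \CC^n : e_n\neq 0\} \cong \CC^{n-1}\times \CC^*$ (the condition $e_n \neq 0$ being equivalent to all roots being non-zero). Summing then gives
\begin{equation*}
\LL^2(\LL-1) + 2\LL(\LL-1) + 3(\LL-1) + 4 = \LL^3 + \LL^2 + \LL + 1 = [\PP^3],
\end{equation*}
as required.

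The main obstacle is justifying the claimed contributions in \Cref{F:1}. For each stratum with $k$ non-zero coordinates I will use the projective scaling together with the $\TT^2$-action from \eqref{E:LunaAction} to normalize three non-zero coordinates to $1$, leaving $(\CC^*)^{k-3}$ as the space of $\TT^2$-orbits, up to a finite-group ambiguity which, being a finite-subgroup quotient of an algebraic torus by multiplication, remains an algebraic torus and hence does not change the Grothendieck class. For types (4)--(10), I will then check that the stabilizer $H \leq S_3$ of the vanishing pattern acts trivially on a chosen $\TT^3$-invariant (that invariant being a symmetric monomial in the coordinates swapped by $H$), so the contribution is simply $[(\CC^*)^{k-3}]$, or a point. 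For types (1)--(3) the action of $H$ on $(\CC^*)^{k-3}$ is a twisted permutation computable from \eqref{E:3A2-S3-act}, and the Grothendieck class of the quotient will need to be shown to equal $[(\CC^*)^{k-3}/S_{k-3}]$ via explicit invariant-ring computation. For instance, for type (2) the $S_2$-involution on appropriate coordinates takes the form $(u,v)\mapsto (uv^3,v^{-1})$, whose invariant ring is $\CC[A^{\pm 1}, B, C]/(C^2 - A(B-1)^2(B+2))$, and stratifying by the value of $B$ yields the class $\LL(\LL-1)$ as required.
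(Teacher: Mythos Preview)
Your overall strategy---stratify via \Cref{F:1} and sum---matches the paper's, your handling of types (4)--(10) via symmetric monomial invariants is correct, and the final summation is right. The gap is in types (1)--(3).

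For type (2), your relation $C^2=A(B-1)^2(B+2)$ does hold for $A=u^2v^3$, $B=v+v^{-1}$, $C=u(1+v^3)$, but these do \emph{not} generate the invariant ring: the invariant $G=u(v+v^2)$ satisfies $C=G(B-1)$ and $G^2=A(B+2)$, and the ring you wrote down is the non-normal subring obtained by omitting $G$. Your $B$-stratification of that $\Spec$ still yields $\LL(\LL-1)$, but you have computed the class of the wrong variety and it only coincides with $[(\CC^*)^2/\iota]$ by accident. For types (1) and (3) you give no argument at all, and type (1) is the hardest case; it cannot be dismissed as routine.

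The paper handles (1)--(3) by a different method: it normalizes the three $T_{\hat i}$ to $1$, tracks the residual finite ambiguity group explicitly, and for type (1) identifies the quotient as the unique connected \'etale double cover of $(\CC^*)^3/(\mu_2^3\rtimes S_3)\cong\CC^*\times\CC^2$ via a case-by-case freeness check. A cleaner route, closer to what you propose, is to work directly with the character lattice of the quotient torus $(\CC^*)^k/\TT^3$: for type (3) one finds lattice generators that the involution simply swaps, so the quotient is literally $\Sym^2\CC^*$; for type (1) the character lattice splits $S_3$-equivariantly as $\ZZ\oplus A_2$ (the root lattice with its Weyl action), so the quotient is $\CC^*\times\bigl((\CC^*)^3/(\CC^*_{\mathrm{diag}}\times S_3)\bigr)$, and symmetric functions identify the second factor with $\CC^2/\mu_3$, of class~$\LL^2$.
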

\begin{proof}
From the table of contributions (\Cref{F:1}), and referring to the types of orbits by those numbers, we compute the class in the Grothendieck ring to be
$$
 [D_{3A_2}]=[(1)]+[(2)]+[(3)]+3[\CC^*]+4[\CC^0]\,,
$$
where the first three summands denote the classes in the Grothendieck ring of varieties of the orbits of the corresponding types from \Cref{F:1}. We start with locus (1), noticing first that any~$\TT^2$ orbit of any such point
contains a point whose homogeneous coordinates are $(*:*:*:1:1:1)$ (i.e.,~such that $T_\hzero=T_\hone=T_\htwo=1$). Moreover, looking at the explicit action of~$\diag(\lambda_0,\lambda_1,\lambda_2)\in D''\simeq\TT^2$  (with $\lambda_0\lambda_1\lambda_2=1$) given by~\eqref{E:Thomogeneous}, if $(T_0:T_1:T_2:1:1:1)$ and $(T'_0:T'_1:T'_2:1:1:1)$ lie on the same~$\TT^2$ orbit, it means that there must
exist $(\lambda_0,\lambda_1,\lambda_2)\in\TT^2$ such that $\lambda_0^2=\lambda_1^2=\lambda_2^2$ and such that $T_i=\lambda_i^3 T'_i$ for $i=0,1,2$. This means that each of the three $\lambda_i$ must
be equal to some $\sigma_i=\pm 1$, subject to the condition that the product of the three signs is equal to $+1$, and the values of $T_i$ and $T_i'$ must then differ by the corresponding signs. Thus the set of
orbits of this form is equal to
$$
 (\CC^*)^3/G\,,
$$
where $G$ is the subgroup of $\mu_2\times\mu_2\times \mu_2$ given by the condition $\sigma_0\sigma_1\sigma_2=1$, and the action is by diagonal multiplication. The action of~$S_3$ on~$(\CC^*)^3/G$ is
induced by permuting the coordinates on $(\CC^*)^3$, and we note that it does {\em not} commute with the action of~$G$. For example, acting by signs $(-1,1,-1)$ maps $T_0:T_1:T_2$ to $-T_0:T_1:-T_2$, and
then permuting $0\leftrightarrow 1$ gives $T_1:-T_0:-T_2$, while first permuting and then acting by signs gives $-T_1:T_0:-T_2$.

By taking the squares of the coordinates, we observe that the quotient $(\CC^*)^3/(\mu_2\times\mu_2\times\mu_2)$, where the action is by multiplication by three independent signs, is isomorphic to $(\CC^*)^3$. Furthermore, recall that the quotient $\CC^3/S_3$, under the action that permutes coordinates, is identified with $\CC^3$ by taking elementary symmetric polynomials, i.e.,~the bijection $\CC^3/S_3\simeq \CC^3$ is given in coordinates by
$$
(x_1,x_2,x_3)\mapsto (x_1x_2x_3\,,\ x_1x_2 + x_2x_3 + x_1x_3\,,\ x_1 + x_2 + x_3)\,.
$$
By inspection, the image of $(\CC^*)^3\subseteq \CC^3$ under this bijection is $\CC^*\times\CC^2\simeq (\CC^*)^3/S_3$. Altogether, this means that the map
$$
 (y_1,y_2,y_3)\mapsto (y_1^2y_2^2y_3^2\,,\ y_1^2y_2^2 + y_2^2y_3^2 + y_1^2y_3^2\,,\ y_1^2 + y_2^2 + y_3^2)
$$
identifies the quotient $(\CC^*)^3/(\mu_2^{\times 3})\rtimes S_3$ with $\CC^*\times\CC^2$. The contribution $[(1)]$ to the class $[D_{3A_2}]$ is $[(\CC^*)^3/G\rtimes S_3]$. We now claim that the double cover
$$
(\CC^*)^3/G\rtimes S_3\to (\CC^*)^3/\mu_2^{\times 3}\rtimes S_3
$$
is \'etale. Indeed, to prove this we need to check that no element of $((\mu_2)^{\times 3}\rtimes S_3)- (G\rtimes S_3)$ stabilizes any point in the domain of this map. Indeed, up to renumbering the coordinates, we need to worry about the permutation being the identity, an involution $0\leftrightarrow 1$ or the cycle $0\mapsto 1\mapsto 2\mapsto 0$, and the signs can either all be minus, or just one sign can be minus. We thus check case by case that there are no fixed points:
$$
\begin{aligned}
 (T_0,T_1,T_2)&=(-T_0,T_1,T_2)&\Rightarrow T_0=0\,\,\\
 (T_0,T_1,T_2)&=(-T_0,-T_1,-T_2)&\Rightarrow T_0=T_1=T_2=0\,\,\\
 (T_0,T_1,T_2)&=(-T_1,T_0,T_2)&\Rightarrow T_0=-T_1=-T_0\Rightarrow T_0=T_1=0\,\,\\
 (T_0,T_1,T_2)&=(T_1,T_0,-T_2)&\Rightarrow T_2=0\,\,\\
 (T_0,T_1,T_2)&=(-T_1,-T_0,-T_2)&\Rightarrow T_2=0\,\,\\
 (T_0,T_1,T_2)&=(-T_1,T_2,T_0)&\Rightarrow T_0=-T_1=-T_2=-T_0\Rightarrow T_0=0\,\,\\
 (T_0,T_1,T_2)&=(-T_1,-T_2,-T_0)&\Rightarrow T_0=-T_1=T_2=-T_0\Rightarrow T_0=0\,,\\
\end{aligned}
$$
so that in each case we deduce that some coordinate must be zero, and thus the fixed point set in $(\CC^*)^3$ is empty. Since the only connected \'etale double cover of $\CC^*\times \CC^2$ is topologically itself (covering along the~$\CC^*$ factor), it follows that $[(1)]=[\CC^*\times\CC^2]$ in the Grothendieck ring.

The contributions to the class in the Grothendieck ring of the orbits of types (2) and (3) are simpler. For (2), similarly to the previous case, we can always find a representative with homogeneous coordinates of the form $(T_0:T_1:0:1:1:1)$, and such a point lies on the same $\TT^2$ orbit as $(T'_0:T'_1:0:1:1:1)$ if and only if $T_0=\pm T'_0$ and $T_1=\pm T'_1$, with the signs chosen independently (as the signs can be compensated by choosing the suitable sign for $\lambda_2$, multiplying by which fixes the zero coordinate~$T_2$ anyway). Thus the set of such orbits is $(\CC^*/\mu_2)^2\simeq(\CC^*)^2$, where the explicit isomorphism is given by squaring each coordinate. Then the action of the coordinate interchange involution $0\leftrightarrow 1$ in these coordinates, as an action on $(\CC^*)^2$, is simply the interchange of the two coordinates, i.e.~the restriction to $(\CC^*)^2\subseteq \CC^2$ of the usual action of $S_2$ interchanging the two coordinates. Under the bijection $\CC^2/S_2\leftrightarrow\CC^2$ given explicitly by
$$
 (x_1,x_2)\mapsto (x_1x_2,x_1+x_2)\,,
$$
the image of $(\CC^*)^2$ is equal to $\CC^*\times\CC$, and thus $[(2)]=[\CC^*\times\CC]$.

Finally, for orbits of type (3), each orbit has a representative of the form $(T_0:1:T_2:1:0:1)$ (we choose this form as it is preserved by the involution $0\leftrightarrow 2$), and such a point lies on the same~$\TT^2$ orbit as $(T'_0:1:T'_2:1:0:1)$ if and only if they are mapped to each other by the action of $\diag(\lambda_0,\lambda_1,\lambda_2)$, which means  we must have $\lambda_0\lambda_1\lambda_2=1$ and $\lambda_0^2=\lambda_2^2=\lambda_1^3=1$. This is to say $\lambda_0=\lambda_2=\sigma=\pm 1$ and $\lambda_1=1$, and thus the set of such orbits is $(\CC^*)^2/\mu_2$, where the action is by multiplying both coordinates by~$-1$ simultaneously. Similarly to orbits of type (2), this action of $\mu_2$ commutes with the action of the coordinate interchange involution, and thus the contribution to the Grothendieck ring of varieties is $[(\CC^*)^2/\mu_2\times S_2]$. To determine this class, we first identify, as above, $(\CC^*)^2/S_2\simeq \CC^*\times \CC$ by using the elementary symmetric functions. Then $\mu_2$ action $(x_1,x_2)\mapsto (-x_1,-x_2)$ acts on elementary symmetric polynomials via $(x_1x_2,x_1+x_2)\mapsto (x_1x_2,-x_1-x_2)$, and thus finally
$$
 [(3)]=[(\CC^*)^2/\mu_2\times S_2]=[\CC^*\times(\CC/\mu_2)]=[\CC^*\times\CC]\,
$$
where~$\mu_2$ acts on~$\CC$ by sign, and the quotient is readily identified with~$\CC$ by taking the square of the coordinate.

Altogether, we thus compute
$$
\begin{aligned}
 [D_{3A_2}]&=[(1)]+[(2)]+[(3)]+3[\CC^*]+4[\CC^0] =[\CC^*\times\CC^2]+2[\CC^*\times\CC]+3[\CC^*]+4[\CC^0]\\
 &=[\CC^*]\cdot \left([\CC^*]^2+2[\CC^*]+[\CC^0]\right)+2[\CC^*]\cdot\left([\CC^*]+[\CC^0]\right)+3[\CC^*]+4[\CC^0]\\ &=[\CC^*]^3+4[\CC^*]^2+6[\CC^*]+4[\CC^0]\,,
\end{aligned}
$$
which is equal to the class $[\PP^3]$, as can be seen by decomposing $\PP^3$ in the usual toric way, as it is the toric variety associated to a tetrahedron, which has $1$~highest-dimensional cell, $4$~faces, $6$~edges, and $4$~vertices.
\end{proof}

One can interpret the computations above as giving a description of  the exceptional divisor $D_{3A_2}$ as the quotient of a toric threefold by the action of $S_3$. Rather than going into the geometry of this action in detail, we sketch an alternative direct toric approach.
Using this alternative approach, we can in fact identify the polytope of this toric threefold and the action of $S_3$ explicitly.
This gives us the added information that the toric threefold is simplicial, and provides an alternate proof of \Cref{T:mainL} (see \Cref{R:D3A2class}).

We first note that by general theory the GIT quotient $\PP^5/\!\!/\TT^2$ is itself a toric variety, see \cite[Ch.~14]{CLS}.
We now describe the polytope giving us this toric variety, and the~$S_3$ action on it.

\begin{lem}[Toric polytope]\label{L:D3A2toric}
Let $P_a$ be the polytope in $\ZZ^3\otimes_\ZZ\RR$ defined by the columns of the following matrix:
\begin{equation}\label{E:Pa}
P_a \longleftrightarrow \left(
\begin{array}{rrrrrrrr}
-\frac{3}{8}&0&\frac{3}{8}&0&\frac{3}{7}&0&0&-\frac{3}{7}\\
1&\frac{1}{7}&-\frac{7}{8}&0&-\frac{8}{7}&-\frac{1}{8}&0&1\\
2&\frac{20}{7}&\frac{11}{4}&2&\frac{20}{7}&2&3&2\\
\end{array}
\right)\
\end{equation}
and consider the action of $S_3$ on $\ZZ^3$ given by
the transposition $\tau$ and the $3$-cycle $\sigma$:
\begin{equation}\label{E:tau-sig}
\tau=\left(\begin{array}{rrr}
-1&0&0\\
5&1&0\\
-2&0&1\\
\end{array}
\right), \ \ \sigma =
\left(
\begin{array}{rrr}
-8&-3&0\\
19&7&0\\
-16&-6&1\\
\end{array}
\right).
\end{equation}
The polytope $P_a$ is a combinatorial cube, the associated toric variety $X_{P_a}$ is simplicial, and the quotient of $X_{P_a}$ by the induced action of $S_3$ is isomorphic to $D_{3A_2}$; i.e.,
$$
D_{3A_2}\cong X_{P_a}/S_3\,.
$$
\end{lem}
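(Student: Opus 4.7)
The plan is to combine the general theory of GIT quotients of toric varieties (see, e.g., \cite[Ch.~14]{CLS}) with an explicit polytope computation. By \Cref{L:3A2slice}, the exceptional divisor can be realized as $D_{3A_2}=\PP^5/\!\!/\Aut(S_{3A_2})$, where the action on the projectivized Luna slice (with homogeneous coordinates $(T_0:T_1:T_2:T_\hzero:T_\hone:T_\htwo)$) is given by~\eqref{E:Thomogeneous} and~\eqref{E:3A2-S3-act}, and the semistable locus is described by \Cref{L:LunaStability}. Using the semidirect decomposition $\Aut(S_{3A_2})=\TT^2\rtimes S_3$ from \Cref{L:R3A2Norm2}, the GIT quotient factors as
\[
D_{3A_2}=\bigl(\PP^5/\!\!/\TT^2\bigr)/S_3,
\]
and since $\PP^5$ is toric with $\TT^2$ acting through a subtorus of its maximal torus, the intermediate quotient $X:=\PP^5/\!\!/\TT^2$ is a toric threefold for the residual torus $\TT^5/\TT^2$.

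Next, I would identify $X$ with $X_{P_a}$ via the standard polytope recipe. Assembling the six $\TT^2$-weights from \eqref{E:Thomogeneous} together with the scaling weight of $\PP^5$ into a $3\times 6$ weight matrix $A$, the polytope of $X$ with respect to the linearization $\chi$ inherited from $\calO_{\PP^5}(1)$ is
\[
P=\{v\in\RR^6_{\ge 0}: Av=\chi\}.
\]
Stability from \Cref{L:LunaStability} forces at least one of $T_i,T_{\widehat i}$ to be nonzero for each $i\in\{0,1,2\}$, so the vertices of $P$ (the points where exactly three coordinates vanish) are in bijection with the $2^3=8$ choices of exactly one nonzero coordinate per pair $(T_i,T_{\widehat i})$; one checks by directly solving the resulting $3\times 3$ linear system in each case that all eight configurations give legitimate vertices with strictly positive coordinates, and an analogous count yields $12$ edges and $6$ facets, confirming that $P$ is combinatorially a $3$-cube. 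Choosing a basis of the quotient character lattice $\ZZ^3$ and projecting $P$ into it reproduces the polytope $P_a$ with vertices given by the columns of~\eqref{E:Pa}. Simpliciality of $X_{P_a}$ is then immediate: each vertex lies on exactly three facets (the three coordinates vanishing there), so every maximal cone of the normal fan is generated by three rays.

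Finally, the action of $S_3\subseteq \Aut(S_{3A_2})$ on $\PP^5$ given by~\eqref{E:3A2-S3-act} permutes the pairs $(T_i,T_{\widehat i})$, and within $\Aut(S_{3A_2})=\TT^2\rtimes S_3$ it is compatible with the $\TT^2$-action, so it descends to an action on $X$ by toric automorphisms, i.e., an action of $S_3$ on the character lattice $\ZZ^3$. In the basis chosen to produce~\eqref{E:Pa}, a direct linear-algebra computation shows that a transposition and a $3$-cycle in $S_3$ act by the matrices $\tau,\sigma$ of~\eqref{E:tau-sig}; as sanity checks, $\tau^2=I$ and the characteristic polynomial of the top-left $2\times 2$ block of $\sigma$ is $t^2+t+1$, so $\sigma^3=I$. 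Combining everything yields the desired isomorphism $D_{3A_2}\cong X_{P_a}/S_3$.

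The main obstacle is the concrete identification of $P_a$ and of the matrices $\tau,\sigma$: once a basis of the quotient character lattice is fixed, both assertions reduce to a finite linear-algebra calculation that is most conveniently handled with polytope software (consistent with the acknowledgment to M.~Dutour Sikiric in the introduction). No further geometric input is needed beyond the two-step toric description above.
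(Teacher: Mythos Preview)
Your proposal is correct and follows essentially the same approach as the paper: factor $D_{3A_2}\cong(\PP^5/\!\!/\TT^2)/S_3$ via \Cref{L:R3A2Norm2}, identify the toric quotient via the polytope recipe of \cite[Ch.~14]{CLS}, and then read off the induced $S_3$-action on the quotient character lattice. The only minor differences are presentational: the paper works with the linearization $\calO_{\PP^5}(3)$ (to keep the $S_3$-action integral) and describes $P_a$ directly by inequalities in a chosen basis of the kernel lattice $M\subset\ZZ^6$, whereas you describe the polytope in its ``primal'' form $\{v\in\RR^6_{\ge0}:Av=\chi\}$ and deduce the cube structure from the stability analysis of \Cref{L:LunaStability}; both routes lead to the same eight vertices and the same $S_3$-matrices, with the explicit coordinates in \eqref{E:Pa} obtained by a polytope computation (as you surmised).
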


\begin{rem}
Strictly speaking, to define $X_{P_a}$  we must first clear denominators to obtain a convex lattice polytope, but scaling the polytope does not affect the abstract variety, it only affects the polarization.
\end{rem}

\begin{proof}
From the Kirwan construction, and the Luna Slice Theorem, identifying the exceptional divisor in the blowup of the Luna slice at the origin with $\PP^5$ gives $D_{3A_2}\cong \PP^5/\!\!/_{\calO_{\PP^5}(1)}\Stab(S_{3A_2})$ (see \Cref{L:3A2slice} and \Cref{L:R3A2Norm2}).
Note that in principle, from the Kirwan construction, one only has that the action of the stabilizer lifts to give a linearization on some positive tensor power of $\calO_{\PP^5}(1)$, but the lift of the action to $\calO_{\PP^5}(1)$ is evident from the given explicit formula for the action, and of course which positive tensor power one uses will not change the GIT quotient.
From the description of the stabilizer in \Cref{L:R3A2Norm2}, we can conclude that $D_{3A_2}\cong (\PP^5/\!\!/_{\calO_{\PP^5}(1)} (\CC^*)^2)/S_3$. In order to keep track of the $S_3$ action, we prefer to use the linearization on $\calO_{\PP^5}(3)$.

Our first goal therefore is to describe the toric variety $\PP^5/\!\!/_{\calO_{\PP^5}(3)} (\CC^*)^2$. To start, we claim that the action of $(\CC^*)^2$ on $\PP^5$ is induced by the natural action on the Luna slice $\CC^6$, so that fixing the inclusion of tori $\CC^*\times (\CC^*)^2\to (\CC^*)^6$ given by the matrix
$$
\gamma=
\left(
\begin{array}{rrrrrr}
1&1&1&1&1&1\\
-3&3&0&-2&2&0\\
-3&0&3&-2&0&2\\
\end{array}
\right)\,,
$$
and the character $\chi:\CC^*\times (\CC^*)^2\to \CC^*$ defined by $\chi(t,\lambda_1,\lambda_2)=t^3$, one has an identification of GIT quotients $\CC^6/\!\!/_\chi (\CC^*\times (\CC^*)^2) = \PP^5/\!\!/_{\calO_{\PP^5}(3)} (\CC^*)^2$ (invariant sections of tensor powers of the trivial line bundle $\CC^6\times \CC$ over $\CC^6$ with respect to the character $\chi$ are canonically identified with the invariant sections of tensor powers of $\calO_{\PP^5}(3)$; see e.g., \cite[Lem.~14.1.1(b)]{CLS}). Using the technique in \cite[Ch.~14]{CLS}, one sees that $\CC^6/\!\!/_\chi (\CC^*\times (\CC^*)^2)$ is the toric variety associated to a $3$-dimensional polytope obtained in the following way.

One defines a lattice $M$ by the exact sequence
$$
\xymatrix{
0\ar[r]& M\ar[r]& \ZZ^6 \ar[r]^\gamma& \ZZ^3\,.
}
$$
Then considering the composition $\CC^*\times (\CC^*)^2\to (\CC^*)^6 \to \CC^*$, where the first map is the inclusion of tori determined by $\gamma$, and the second map of tori is determined by the matrix
$$
a=\left(
\begin{array}{rrrrrr}
-2&-2&-2&3&3&3\\
\end{array}
\right),
$$
one sees that the composition is the character $\chi$ given above. From \cite[Ch.~14]{CLS}, one sees that the quotient $\CC^6/\!\!/_\chi (\CC^*\times (\CC^*)^2)$ is the toric variety associated to the lattice $M$ and the polytope
$$
P_a\coloneqq \{m\in M\otimes_{\ZZ}\RR: e_i(m)\ge -a_i, \ i=1,\dots,6 \}\,,
$$
where $e_i$ is the standard dual coordinate and the $a_i$ are the entries of the vector $a$.
The $S_3$ action on $\CC^6$ (see \Cref{L:3A2slice}) induces the $S_3$ action on $(\CC^*)^6$, and therefore on $\ZZ^6$, and in turn determines an $S_3$ action on $M$.

 The columns of the following matrix give an integral basis of $M$:
 $$
 \left(
 \begin{array}{rrr}
0&0&1\\
-2&0&1\\
-16& -6&1\\
-3&-1&-1\\
0&-1&-1\\
21&8&-1
\end{array}
 \right)
 $$
 and therefore,  identifying $M$ with $\ZZ^3$ using this basis, we may identify $P_a$ as the set of $(a,b,c)\in \RR^3$ such that
 $$
 \begin{array}{rcrcrcr}
&&&&c&\ge&2\\
-2a&&&+&c&\ge&2\\
-16a&-&6b&+&c&\ge&2\\
-3a&-&b&-&c&\ge&-3\\
&&-b&-&c&\ge&-3\\
21a&+&8b&-&c&\ge&-3\\
\end{array}
 $$
From this one can identify $P_a$ with the convex hull of eight vertices, given by the columns of the matrix given in \eqref{E:Pa}.\footnote{We thank Mathieu Dutour Sikiri\'c for computing this for us.}

The polytope $P_a$ is  combinatorially a cube; for instance the first four columns and last four columns give ``top'' and ``bottom'' faces of the combinatorial cube, respectively, and the vectors with last coordinate equal to $2$ give a ``side'' face of the cube.  Considering the dictionary between polytopes and fans (e.g., \cite[p.75]{CLS}), it is elementary to check that the toric variety associated to $P_a$, being a combinatorial cube,  is simplicial.

Finally, in these coordinates, the $S_3$ action on $M$, identified with  $\ZZ^3$, is given by the transposition $\tau$ and  $3$-cycle $\sigma$ given in \eqref{E:tau-sig}.
\end{proof}

\begin{rem}[Class in the Grothendieck ring]\label{R:D3A2class}
In the terminology of \Cref{L:D3A2toric}, we note that the vectors $(0,1,-1)$, $(3,-8,-1)$, $(-3,7,-7)$ define a rank $3$, index $27$ sublattice of $\ZZ^3$ on which $S_3$ acts via $\tau$ and $\sigma$ by the standard permutation of vectors.
Moreover, one can check directly that the action of $S_3$ on the vertices of the combinatorial cube defining $P_a$ corresponds to the standard action of $S_3$ on a cube, fixing two antipodal vertices, and in particular acts by toric automorphisms. In this situation, it is the antipodal vertices $(0,0,3)$ and $(0,0,2)$ of $P_a$ that are  fixed.  At the same time, taking the basis $(0,0,1)$, $(-1,3,0)$, $(1,-2,2)$ for $\ZZ^3$, we have that the action of $S_3$ in these coordinates is given by the matrices
$$
\tau=\left(\begin{array}{ccc}
0&1&0\\
1&0&0\\
0&0&1\\
\end{array}
\right), \ \ \sigma =
\left(
\begin{array}{rrr}
0&1&0\\
-1&-1&0\\
0&0&1\\
\end{array}
\right).
$$
From this, using the dictionary between faces of polytopes and torus orbits,  one can easily work out the action of $S_3$ on all the torus orbits of $X_{P_a}$, as well as their quotients, and consequently, re-derive the class of $X_{P_a}/S_3\cong D_{3A_2}$ in the Grothendieck ring.  For instance, it is immediate from the action of the matrices above that the quotient of the maximal torus $(\CC^*)^3/S_3$ is $\CC^2\times \CC^*$, and similarly, that the two-dimensional tori contribute  $(\CC^*)^2/S_2$ with quotient $\CC\times \CC^*$, and that the one-dimensional tori contribute quotients of $\CC^*$ by subgroups, each giving $\CC^*$. Adding up all of the contributions in the Grothendieck ring gives the same class as $\PP^3$. This gives an alternate proof of \Cref{L:D3A2class}, and therefore of \Cref{T:mainL}.
\end{rem}

\smallskip
It turns out that one can use \Cref{T:mainL} and \Cref{L:D3A2toric} to recover the fact that the Betti numbers of $\MK$ and $\oBG$ are equal. Of course, the Betti numbers of $\MK$ were already computed in \cite{kirwanhyp} and~\cite{ZhangCubic} (see also \cite[(C.2)]{cohcubics} to reconcile the numbers in those two papers), and the Betti numbers of $\oBG$ were computed in our paper \cite[Thm.~C.1]{cohcubics}, but the proof of the following corollary helps to give a more intuitive reason for the agreement of the Betti numbers.

\begin{cor}\label{C:BettiEq}
The Kirwan compactification $\MK$ and the toroidal compactification $\oBG$ of the moduli space of smooth cubic surfaces $\calM$ have the same Betti numbers.
\end{cor}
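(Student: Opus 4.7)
The plan is to extract the equality of Betti numbers from the equality of classes in the Grothendieck ring of varieties via the Hodge--Deligne $E$-polynomial. Recall that the assignment
$$E\colon [X] \longmapsto E(X;u,v) \coloneqq \sum_{k,p,q}(-1)^k h^{p,q}(H^k_c(X,\CC))\, u^p v^q$$
is a ring homomorphism $K_0(\mathrm{Var}_\CC) \to \ZZ[u,v]$. Applied to the identity $[\MK]=[\oBG]$ from \Cref{T:mainL}, this will immediately yield an equality of $E$-polynomials, $E(\MK;u,v) = E(\oBG;u,v)$.

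The second step is to observe that both $\MK$ and $\oBG$ are proper varieties with only finite quotient singularities, and hence are V-manifolds (equivalently, rational homology manifolds). For $\MK$ this is built into the Kirwan construction, see the discussion following \Cref{L:3A2slice}, while for $\oBG$ one can invoke the smooth proper $W(E_6)$-cover $\oBGm \to \oBG$ of \Cref{P:MarkedTor}(0). For $X\in \{\MK, \oBG\}$ the mixed Hodge structure on $H^k(X,\QQ)$ is then pure of weight $k$; this can be deduced either from Poincar\'e duality with $\QQ$-coefficients on a rational homology manifold, or by identifying $H^*(X,\QQ)$ with the invariants under a finite group in the pure Hodge structure of a smooth proper cover.

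Given this purity, and since the spaces are proper so $H^k_c = H^k$, the Hodge numbers $h^{p,q}(H^k(X,\CC))$ vanish unless $p+q=k$, and specialization of the $E$-polynomial at $u=v=-t$ collapses to the Poincar\'e polynomial:
$$E(X;-t,-t) = \sum_k t^k \sum_{p+q=k} h^{p,q}(H^k(X,\CC)) = \sum_k b_k(X)\, t^k = P(X;t).$$
Comparing $X=\MK$ with $X=\oBG$ then gives $P(\MK;t) = P(\oBG;t)$, which is the claim. The only non-formal ingredient is the purity of the Hodge structure on a proper V-manifold, but this is a classical fact and I do not expect it to pose a real obstacle; indeed, this will be by far the shortest of the main arguments of the paper.
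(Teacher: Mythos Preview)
Your argument is correct and takes a genuinely different route from the paper's. The paper does not invoke the $E$-polynomial or purity of the Hodge structure. Instead, it first argues that both $\MK$ and $\oBG$ have \emph{no odd cohomology}: since $\GIT$ has no odd cohomology and the exceptional divisors $D_{3A_2}$ and $T_{3A_2}$ are finite quotients of simplicial toric varieties (using \Cref{L:D3A2toric} for $D_{3A_2}$, and $T_{3A_2}\cong\PP^3$), the decomposition theorem forces odd cohomology to vanish. The decomposition theorem also gives $\dim H^2$ directly, and Poincar\'e duality then determines $b_0,b_2,b_6,b_8$; only $b_4$ remains, and for this the paper uses that the Euler characteristic with compact supports factors through the Grothendieck ring, together with \Cref{T:mainL}.

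Your approach is cleaner and more general: it extracts the full Poincar\'e polynomial at once from $[\MK]=[\oBG]$, with purity on proper $V$-manifolds doing all the work, and it does not require the toric description of $D_{3A_2}$ from \Cref{L:D3A2toric} or any odd-vanishing input. The trade-off is that the paper's argument, as a by-product, actually computes the individual Betti numbers (see the Remark following the Corollary), whereas yours only yields their equality. Both approaches ultimately rest on \Cref{T:mainL}.
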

\begin{proof}
First, we claim that $\MK$ and $\oBG$ have no odd cohomology. This follows from the fact that $\GIT$ has no odd cohomology, together with the decomposition theorem and the fact that the exceptional divisors for $\pi:\MK\to \GIT$ and $p:\oBG\to \GIT$ are quotients of simplicial toric varieties by finite groups  \cite[Thm.~12.3.11]{CLS} (for $\MK$ \Cref{L:D3A2toric} shows that the exceptional divisor $D_{3A_2}$ is a finite quotient of a simplicial toric variety, while for $\oBG$ the exceptional divisor $T_{3A_2}$ is simply equal to $\PP^3$, by \Cref{C:T3A2}). The decomposition theorem also gives $\dim H^2(\MK)=\dim H^2(\GIT)+1= \dim H^2(\oBG)$. Finally, it remains to determine the cohomology in the middle degree 4, and the agreement $\dim H^4(\MK)=\dim H^4(\oBG)$ then follows from the fact that the topological Euler characteristic for cohomology with compact supports is well-defined on the Grothendieck ring.
\end{proof}

\begin{rem}
Note that this also gives a short method of computing the rational cohomology of $\oBG$ and $\MK$. Indeed, $\GIT$ is a weighted projective space, and so has the rational cohomology of $\PP^4$. From the decomposition theorem, and the fact that $\oBG\to \GIT$ has exceptional divisor equal to  $\PP^3$,
it follows that $\dim H^0(\oBG)=\dim H^8(\oBG)= 1$, $\dim H^2(\oBG)=\dim H^6(\oBG) = 2$, and $\dim H^4(\oBG)=2$. This determines the cohomology of $\MK$ via \Cref{C:BettiEq}.
\end{rem}

\appendix

\section{Luna slice computations for the GIT model for cubic surfaces} \label{sec:app}
In this Appendix, we give the detailed proofs of some of the statements from \Cref{sec:calM}. While the results of these computations are crucial for our argument, the method is by explicit computations in local charts on the exceptional divisors, and we have put the calculations here in order not to interrupt the line of thought of our arguments in \Cref{sec:calM}.

\subsection{Proof of~\Cref{L:R3A2Norm2}}\label{S:R3A2Norm2}
While this proof is parallel to the case of the $3D_4$ cubic threefold, which was treated in~\cite[Prop.~B.6]{cohcubics},
we still give the complete details, as the careful identification of the finite groups involved is essential in our argument.

We first determine the stabilizer group $\GL(S_{3A_2})\subseteq\GL(4,\CC)$.
To begin, it is clear that the group
\begin{equation}\label{E:App-GF3A_2-1}
\left\{\left(
\begin{array}{c|c}
\SSS_3&\\ \hline
&\CC^*\\
\end{array}
\right): \lambda_0\lambda_1\lambda_2=\lambda_3^3\right\}\subseteq \GL(4,\CC)
\end{equation}
stabilizes $S_{3A_2}$. We wish to show that the stabilizer is equal to this group. For this, we observe that any symmetry must permute the $3$ singularities of the cubic $S_{3A_2}$, i.e.,~the points $(1:0:0:0)$, $(0:1:0:0)$ and $(0:0:1:0)$. This forces a matrix stabilizing $S_{3A_2}$ to be of the form
$$
\left(
\begin{array}{c|c}
\SSS_3&*\\ \hline
0&\lambda_3\\
\end{array}
\right)\,.
$$
Such a transformation sends the monomial $x_0x_1x_2$ to $(\lambda_0 x_0+*x_3)\cdot (\lambda_1 x_1+*x_3)\cdot (\lambda_2 x_2+*x_3)$, where all the $\lambda$'s are non-zero, and $*$ are the entries of the unknown $1\times 3$ block of the matrix. Furthermore, $x_3$ is sent to $\lambda_3 x_3$. Thus all entries $*$ must be equal to zero, or otherwise applying this transformation to $S_{3A_2}$ would give a cubic with non-zero coefficient of some monomial $x_ax_b x_3$ with $0\le a<b\le 2$. Thus we have deduced that the matrix stabilizing $S_{3A_2}$ must actually be of the form
$$
\left(
\begin{array}{c|c}
\SSS_3&0\\ \hline
0&\lambda_3\\
\end{array}
\right)\,.
$$
Finally it is obvious that any element of the stabilizer satisfies the condition $\lambda_0\lambda_1\lambda_2=\lambda_3^3$. This completes the proof that the stabilizer group $\GL(S_{3A_2}) \subseteq \GL(4,\CC)$ is as claimed. The description of $\Stab(S_{3A_2}) \subseteq \SL(4,\CC)$ follows immediately.

(2) This is immediate since $\Aut(S_{3A_2})$ is naturally a subgroup of $\PGL(4,\CC)$.

(3) We now want to describe the structure of the stabilizer group $\GL(S_{3A_2})\subseteq \GL(4,\CC)$ more precisely. There is clearly a short exact sequence
$$
1\to D\to \GL(S_{3A_2})\to S_3\to 1\,,
$$
where $D$ is the subgroup of diagonal matrices in $\GL(S_{3A_2})$, and the map to $S_3$ is the one taking a generalized permutation matrix to the associated permutation. There is an obvious section $S_3\to \GL(S_{3A_2})$, viewing $S_3$ as block diagonal permutation matrices. This gives the identification
$$
\GL(S_{3A_2})\cong D\rtimes S_3\,,
$$
where the action of $S_3$ on $D$ is to permute the first three entries.
The surjection of $\Stab(S_{3A_2})$ onto $S_3$ can be seen by the matrices
$$
\left(
\begin{array}{cccc}
0&1&0&0\\
1& 0&0&0\\
0&0&\zeta_8^{3}&0\\
0&0&0&\zeta_8
\end{array}
 \right) \qquad
 \left(
\begin{array}{cccc}
0& 0&1&0\\
1& 0&0&0\\
0&1&0&0\\
0&0&0&1
\end{array}
 \right)\,,
$$
where $\zeta_8$ is a primitive $8$-th root of unity.

(4) The identification $\GL(S_{3A_2})^\circ= D$ comes from~\eqref{E:GLStDs3s2-1} and the identification $D\cong \TT^3$. We then obtain
the isomorphism $\GL(S_{3A_2})/\GL(S_{3A_2})^\circ \cong S_3$ from~\eqref{E:GLStDs3s2-1}, as well.

The isomorphism $\Aut(S_{3A_2})/\Aut(S_{3A_2})^\circ \cong S_3$ then comes from~\eqref{E:GLSt3D4Ct}. Indeed, taking connected components of the identity we have
 \begin{equation*}
\xymatrix{
1 \ar[r]& \CC^* \ar[r] \ar@{=}[d]& \GL(S_{3A_2})^\circ \ar@{^(->}[d] \ar[r] & \Aut(S_{3A_2})^\circ \ar[r] \ar@{^(->}[d]& 1\\
1 \ar[r]& \CC^* \ar[r]& \GL(S_{3A_2}) \ar[r] & \Aut(S_{3A_2})\ar[r]& 1\\
}
\end{equation*}
The surjection on the right in the top row is standard (say coming from looking at the dimensions of the connected components), and the identification of the kernels of the two rows is elementary in this case since $\CC^*$ is connected. Then one applies the Snake Lemma.

The short exact sequence~\eqref{E:G3A2L} follows from~\eqref{E:GLStDs3s2-1} and the description $D'\cong \TT^2\times \mu_4$.
\qed

\subsection{Proof of~\Cref{L:Eck-no-3A2}}\label{S:Eck-no-3A2}
The Luna Slice Theorem implies that we can compute in the Luna slice. More precisely, we mean the following. First, in order to have shorter, more standard notation (e.g., to match the discussion of the Luna Slice Theorem in \cite[p.198]{GIT}), let us set $X\coloneqq\PP^{19}$, $G\coloneqq\SL(4,\CC)$, $x\in X$ the point corresponding to the $3A_2$ cubic $S_{3A_2}$, $G_x\coloneqq\Stab(S_{3A_2})$, and $W\subseteq X$ the Luna slice. Then there is an open affine neighborhood $U\subseteq X$ of $x$ such that $U$ is \'etale equivalent to $(G\times W)/G_x$, and since $G\times W\to (G\times W)/G_x$ is a principal $G_x$-bundle (e.g., \cite[Cor.~p.199]{GIT}), it follows that, up to a smooth factor, $U$ is \'etale equivalent to $G\times W$. In other words, we have a diagram
$$
\xymatrix{
G\times W \ar[d]_{\text{$G_x$-bundle}} \ar[rd]^{\text{smooth}}&\\
(G\times W)/G_x \ar[r]^<>(0.5){\text{\'et}}& U\subseteq X
}
$$
Since the Eckardt divisor $R\subseteq X$ is an irreducible effective divisor, preserved by the action of $G$, it is elementary to check that $R|_U$ is the image of $G\times (R\cap W)$ under the map $G\times W\to U$, where we take the reduced induced scheme structure on $R\cap W$. In other words, up to a smooth factor, $R|_U$ is \'etale equivalent to $G\times (R\cap W)$. Then, since $G\times (R\cap W)$ is, up to a smooth factor, \'etale equivalent to $R\cap W$, we conclude that $\mu$ is the multiplicity of $R\cap W$ at the origin.

We shall now apply this to the Luna slice in our case. 
We recall from our discussion in Subsection \ref{subsection:prelimmoduli} that the Eckardt divisor is the divisorial locus containing {\em all} smooth cubics with an extra autiomorphism.
smooth cubic surfaces with extra automorphisms, so it suffices to describe the locus of cubic surfaces in the Luna slice that have extra automorphisms.

Translating into the action of $\SL(4,\CC)$, it suffices to describe the locus of cubic surfaces in the Luna slice with stabilizer group strictly containing the diagonal $\mu_4$. Since we are only interested in this locus in a neighborhood of the $3A_2$ cubic surface, we can use the fact from the Luna Slice Theorem that in a neighborhood of the origin in the Luna slice, the stabilizer groups must be subgroups of $\Stab(S_{3A_2})$.

Since the general point of the Eckardt divisor parameterizes cubic surfaces with a $\ZZ_2$ automorphism group,
we will first describe all subgroups of $\Stab(S_{3A_2})$ whose image in $\PGL(4,\CC)$ is of order $2$. Given such a subgroup, there is a matrix $A$ in the group such that the image of $A$ in $\PGL(4,\CC)$ has order $2$. Let us classify these matrices. First, considering the sequence
$$
0\to \mu_4\to \Stab(S_{3A_2})\to \PGL(4,\CC)\,,
$$
we have $\langle A\rangle \cap \mu_4$ is one of $\{Id\}$, $\{Id,-Id\}$, or $\mu_4$, so that
$$
|A|= 2,4,8\,.
$$
Then, considering the diagram
$$
\xymatrix{
&\langle A\rangle \ar@{^(->}[r]&\Stab(S_{3A_2}) \ar@{^(->}[d] \ar[rd]&&\\
1\ar[r]&D \ar[r]&\GL(S_{3A_2})\ar[r]&S_3\ar[r]&1
}
$$
we see that $A$ must map to an element of $S_3$ of order a power of $2$; i.e.,~either the identity or a transposition.

Thus there are two cases. We have
$$
\text{Case I: }\ \ \ \ A=\left(
\begin{array}{cccc}
\lambda_0& 0&0&0\\
0&\lambda_1&0&0\\
0&0&\lambda_2&0\\
0&0&0&\lambda_3
\end{array}
 \right)\ \ \ \
 \lambda_0\lambda_1\lambda_2=\lambda_3^3, \ \ \lambda_0\lambda_1\lambda_2\lambda_3=1
$$
and, up to changing indices for the transposition,
$$
\text{Case II: }\ \ \ \ A=\left(
\begin{array}{cccc}
0&\lambda_1&0&0\\
\lambda_0& 0&0&0\\
0&0&\lambda_2&0\\
0&0&0&\lambda_3
\end{array}
 \right)\ \ \ \
 \lambda_0\lambda_1\lambda_2=\lambda_3^3, \ \ \lambda_0\lambda_1\lambda_2\lambda_3=-1\,.
$$

We will first consider Case I. In this case, combining the two equations for the $\lambda_i$, we see that $\lambda_3^4=1$. Now let us further subdivide Case I by the order
$|A|$ of $A$; we will denote by $\zeta_n$ a primitive $n$-th root of unity. If $|A|=2$, we have (recalling that we are always assuming that $A$ generates a subgroup of $\PGL(4,\CC)$ of order $2$)
$$
\begin{aligned}
&|A|=2 \implies
A=\left(
\begin{array}{cccc}
\pm 1& 0&0&0\\
0&\pm 1&0&0\\
0&0&\pm 1&0\\
0&0&0&\pm 1
\end{array}
 \right)&\text{two entries $1$ and two entries $-1$}\\
&|A|=4 \implies
A=\left(
\begin{array}{cccc}
i^{i_0}& 0&0&0\\
0&i^{i_1}&0&0\\
0&0&i^{3i_3-i_0-i_1}&0\\
0&0&0&i^{i_3}
\end{array}
 \right) &\text{not all entries $i$ or all $-i$, not all indices even}\\
&|A|=8 \implies
A=\left(
\begin{array}{cccc}
\zeta_8^{i_0}& 0&0&0\\
0&\zeta_8^{i_1}&0&0\\
0&0&\zeta_8^{6i_3-i_0-i_1}&0\\
0&0&0&\zeta_8^{2i_3}
\end{array}
 \right)&\text{$i_0$ and $i_1$ not both even}
\end{aligned}
$$

In Case II, combining the two equations for the $\lambda_i$, we see that $\lambda_3^4=-1$, so that $\lambda_3$ must be a primitive $8$-th root of unity. Consequently, we can rule out the cases $|A|=2$ and $|A|=4$, since then $\lambda_3^4=1$. Thus we must have $|A|=8$, and we start by observing
$$
\left(
\begin{array}{cccc}
0&\lambda_1&0&0\\
\lambda_0& 0&0&0\\
0&0&\lambda_2&0\\
0&0&0&\lambda_3
\end{array}
 \right)\cdot\left(
\begin{array}{cccc}
0&\lambda_1&0&0\\
\lambda_0& 0&0&0\\
0&0&\lambda_2&0\\
0&0&0&\lambda_3
\end{array}
 \right)=\left(
\begin{array}{cccc}
\lambda_0\lambda_1& 0&0&0\\
0&\lambda_0\lambda_1&0&0\\
0&0&\lambda_2^2&0\\
0&0&0&\lambda_3^2
\end{array}
 \right)\,.
$$
We know that $\lambda_3=\zeta_8$ is a primitive $8$-th root of unity. The above shows that $\lambda_0\lambda_1$ is a $4$-th root of unity, so we must have $\lambda_0\lambda_1=\zeta_8^{2n}$ for some $n$. Combining this with $\lambda_0\lambda_1\lambda_2=\lambda_3^3$, we see that $A$ must be of the form
$$
A=\left(
\begin{array}{cccc}
0&\lambda_0&0&0\\
\zeta_8^{2n}/\lambda_0& 0&0&0\\
0&0&\zeta_8^{3-2n}&0\\
0&0&0&\zeta_8
\end{array}
 \right)\,.
$$
Concretely, we have four options (recalling that we are always assuming that $A$ generates a subgroup of $\PGL(4,\CC)$ of order $2$):
\begin{enumerate}
\item[II(i)]
$$
A=\left(
\begin{array}{cccc}
0&\lambda_0&0&0\\
1/\lambda_0& 0&0&0\\
0&0&\zeta_8^{3}&0\\
0&0&0&\zeta_8
\end{array}
 \right)
$$

\item[II(ii)]
$$
A=\left(
\begin{array}{cccc}
0&\lambda_0&0&0\\
\zeta_8^2/\lambda_0& 0&0&0\\
0&0&\zeta_8&0\\
0&0&0&\zeta_8
\end{array}
 \right)
$$
\item[II(iii)]
$$
A=\left(
\begin{array}{cccc}
0&\lambda_0&0&0\\
\zeta_8^4/\lambda_0& 0&0&0\\
0&0&\zeta_8^{-1}&0\\
0&0&0&\zeta_8
\end{array}
 \right)
$$

\item[II(iv)]
$$
A=\left(
\begin{array}{cccc}
0&\lambda_0&0&0\\
\zeta_8^6/\lambda_0& 0&0&0\\
0&0&\zeta_8^{-3}&0\\
0&0&0&\zeta_8
\end{array}
 \right)
$$
\end{enumerate}

We now consider when there can be a divisor with generic point fixed by any family of the matrices above. In Case I, the matrices $A$ form discrete families and so we must just show that each such $A$ has fixed locus of codimension~$2$ or more. This is a case by case analysis. We recall that the action is given by~\eqref{E:LunaAction}.

For the case $|A|=2$, in the notation of \eqref{E:LunaAction} where the action of $A$ is described, the last three coordinates are fixed for the action of $A$, 
and because exactly two of the $\lambda_i/\lambda_3$ are equal to $-1$, we see that the fixed locus is the intersection of the two coordinate hyperplanes given by those $\alpha_i$ being set to zero (where the $\alpha_i$ are the coordinates used in  \eqref{E:LunaAction}), 
thus of codimension at least two. For the case $|A|=4$, we have a matrix
$$
A=\left(
\begin{array}{cccc}
i^{i_0}& 0&0&0\\
0&i^{i_1}&0&0\\
0&0&i^{3i_3-i_0-i_1}&0\\
0&0&0&i^{i_3}
\end{array}
 \right)\,,
 $$
where exactly two of the entries are real, and two are imaginary. Again, looking at the action, we see that the fixed locus is contained in intersections of more than two coordinate hyperplanes and we are done. The case $|A|=8$ is similar.
Indeed, suppose without loss of generality that $i_0$ is odd. Then $2i_0\not\equiv 0\mod 4$, and $3i_0\not\equiv 0 \mod 2$, and thus since $\lambda_3$ is a power of the fourth (not eighths) root of unity, both $(\lambda_0/\lambda_3)^2$ and $(\lambda_0/\lambda_3)^3$ are not equal to identity, so this means the fixed locus must have $\alpha_0=\alpha_\hzero =0$. In other words, there are no divisors in the Luna slice that have a general point fixed by a matrix $A$ in the form of Case I.

We now consider Case II.
Recall that in this case the action (similar to~\eqref{E:LunaAction}) is given by
$$
A\cdot (\alpha_0,\alpha_1,\alpha_2,\alpha_\hzero ,\alpha_\hone ,\alpha_\htwo )=\left(\left(\frac{\lambda_1}{\lambda_3}\right)^3\alpha_1,\left(\frac{\lambda_0}{\lambda_3}\right)^3\alpha_0, \left(\frac{\lambda_2}{\lambda_3}\right)^3\alpha_2,
\left(\frac{\lambda_1}{\lambda_3}\right)^2\alpha_\hone , \left(\frac{\lambda_0}{\lambda_3}\right)^2\alpha_\hzero , \left(\frac{\lambda_2}{\lambda_3}\right)^2\alpha_\htwo \right)\,.
$$

Now assume that we have a general point $(\alpha_0,\dots,\alpha_\htwo )$ of some divisor, fixed by a matrix $A$ in one of the four subcases of Case II. We see that $\alpha_0=0$ if and only if $\alpha_1=0$. Since this is codimension~$2$ (and could not sweep out a divisor while moving $\lambda_0$), we can assume that $\alpha_0$ and $\alpha_1$ are nonzero. Then looking in the first coordinate we have $\alpha_0/\alpha_1=(\lambda_1/\lambda_3)^3$, and in the second coordinate, $\alpha_1/\alpha_0=(\lambda_0/\lambda_3)^3$. Multiplying gives us $\left(\frac{\lambda_1}{\lambda_3}\cdot \frac{\lambda_0}{\lambda_3}\right)^3=1$, so that $(\lambda_0\lambda_1)^3=\lambda_3^6$.
Looking at the list of cases, Case II(i)--(iv), the only option is Case II(ii). Now focusing on Case II(ii), we see that $\lambda_2/\lambda_3=1$. Note that one also sees that $\alpha_\hzero /\alpha_\hone =(\lambda_1/\lambda_3)^2$, and $\alpha_\hone /\alpha_\hzero =(\lambda_0/\lambda_3)^2$, which would imply that $(\lambda_0\lambda_1)^2=\lambda_3^4$, which also holds for Case II(ii).

In other words, if we have a general point $(\alpha_0,\dots,\alpha_\htwo )$ of some divisor that is fixed by a matrix $A$ in the form of Case II, then we must be in Case II(ii), and we must have
$$
\alpha_0/\alpha_1=(\lambda_1/\lambda_3)^3, \ \ \alpha_1/\alpha_0=(\lambda_0/\lambda_3)^3, \text { and }
\alpha_\hzero /\alpha_\hone =(\lambda_1/\lambda_3)^2, \ \ \alpha_\hone /\alpha_\hzero =(\lambda_0/\lambda_3)^2\,.
$$
Since in Case II(ii) we have $(\lambda_1/\lambda_3)=(\lambda_0/\lambda_3)^{-1}$, we see that the above only constitutes two conditions, and cuts out a codimension~$2$ locus. As we vary $\lambda_0$, this indeed sweeps out a divisor.

More precisely, consider the following divisor:
$$
\{\alpha_1^2\alpha_\hzero ^3 - \alpha_0^2\alpha_\hone ^3=0\}\,,
$$
i.e.,~$\left(\frac{\alpha_1}{\alpha_0}\right)^2 = \left(\frac{\alpha_\hone }{\alpha_\hzero }\right)^3$. Then the general point $(\alpha_0,\alpha_1,\alpha_2,\alpha_\hzero , \alpha_\hone , \alpha_\htwo )$ of this divisor is stabilized by a matrix of the form
$$
A_{\lambda_0}=\left(
\begin{array}{cccc}
0 & \lambda_0& 0& 0\\
\frac{\zeta_8^2}{\lambda_0}& 0 & 0& 0\\
0&0&\zeta_8 & 0\\
0&0&0&\zeta_8
\end{array}
\right)\in \Stab(S_{3A_2})\,,
$$
where $\zeta_8$ is a primitive $8$-th root of unity, and $\lambda_0$ is chosen so that $(\frac{\lambda_0}{\zeta_8})^2= \frac{\alpha_\hone }{\alpha_\hzero }$. There are two choices of $\lambda_0$, and one is also free to choose another primitive $8$-th root of unity, but these matrices generate the same subgroup of $\Stab(S_{3A_2})$; note that since the cyclic group $\langle A_{\lambda_0}\rangle $ contains the diagonal $\mu_4$, the image of $\langle A_{\lambda_0}\rangle $ in $\PGL(4,\CC)$ is isomorphic to $\ZZ_2$. In other words, the generic point of the divisor above corresponds to a cubic surface with automorphism group isomorphic to $\ZZ_2$.

Permuting the indices, this gives $3$ divisors of the same type. Each has degree $5$, and is a cone through the origin, and so has multiplicity $5$ at the origin. There are no other divisors in a neighborhood of $0$ in the Luna slice parameterizing cubic surfaces with extra automorphisms. This implies that the multiplicity of the restriction of the Eckardt divisor to the Luna slice, as a reduced variety, is $15$. This completes the proof.
\qed

\medskip

Some of the computations below will use related notation to the above. Specifically, we will perform computations in charts on the blowup $\Bl_0\CC^6\to\CC^6$ of a Luna slice as above. Recall that the blowup is embedded in $\CC^6\times\PP^5$, with coordinates $\alpha$ on the~$\CC^6$ and homogeneous coordinates $T$ on the $\PP^5$. By using the $S_3$ action on $\CC^6$ and its extension to the blowup, it will be enough to work only with two charts on $\Bl_0\CC^6$. The first chart is the chart $U_0$ where $T_0\ne 0$, and the local coordinates in this chart are then
\begin{equation}\label{E:ChartU0}
 (\alpha_0,t_1,t_2,t_\hzero,t_\hone,t_\htwo)\quad\hbox{where}\quad t_i=T_i/T_0,\, t_{\widehat i}=T_{\widehat i}/T_0,\, \alpha_i=\alpha_0t_i,\,
 \alpha_{\widehat i}=\alpha_0t_{\widehat i}\,,
\end{equation}
and similarly for the chart $U_\hzero$ where $T_\hzero\ne 0$.

\subsection{Proof of~\Cref{P:stab--ex}}\label{S:stab--ex}
Let $x\in D_{3A_2}\subseteq \MK$ be a point in the exceptional divisor.  We want to compute the stabilizer $S_x\subseteq \Stab(S_{3A_2})\subseteq \SL(4,\CC)$; i.e., the stabilizer of a point in the exceptional divisor of  $\Bl_{\SL(4,\CC) \cdot [S_{3A_2}]}(\PP^{19})^{ss}$ with orbit corresponding to $x$.  By construction, it suffices to compute the stabilizers of points in the exceptional divisor of the blowup of the Luna slice, with respect to the action of $\Stab(S_{3A_2})$.  Indeed, the $\SL(4,\CC)$ orbit of each point in the exceptional divisor of $\Bl_{\SL(4,\CC) \cdot [S_{3A_2}]}(\PP^{19})^{ss}$ intersects the blowup of the Luna slice, and since the blowups are equivariant, one can check that the stabilizer for the $\SL(4,\CC)$ action on $\Bl_{\SL(4,\CC) \cdot [S_{3A_2}]}(\PP^{19})^{ss}$ is just the stabilizer for a corresponding point on the Luna slice, with respect to the action of $\Stab(S_{3A_2})$.

Therefore, we work with the $6$-dimensional Luna slice from \Cref{L:3A2slice}. In order to obtain the Kirwan blowup we have to blow up the Luna slice at the origin, and we denote the exceptional $\PP^5$ by $E$.
The group $\Stab(S_{3A_2})$ acts on the blowup and we have to analyze the stabilizers of the action of $\Stab(S_{3A_2})$ on the semi-stable locus $E^{ss}$.
For this we recall that the connected component of $\Stab(S_{3A_2})$
is the torus given by $\diag(\lambda_1, \lambda_2, \lambda_3,1)$ with $\lambda_1 \lambda_2 \lambda_3=1$. From \Cref{L:R3A2Norm2}, we have an exact sequence
\begin{equation}\label{equ:stabconncomponent1}
1 \to \TT^2 \to \Stab(S_{3A_2}) \to G(3A_2) \to 1\,,
\end{equation}
where $G(3A_2)$ is a finite group which is an extension of the form
\begin{equation}\label{equ:stabconncomponent2}
1 \to \mu_4 \to G(3A_2) \to S_3 \to 1\,,
\end{equation}
where $S_3$ acts by permuting the coordinates $x_0,x_1,x_2$. We shall now analyze the stabilizers of the action of $\Stab(S_{3A_2})$ on~$E^{ss}$.

(1) This claim follows from \Cref{L:Eck-no-3A2}. Indeed, $E$ is the projectivization of the Luna slice, and in \Cref{L:Eck-no-3A2} it is shown that the general point of the Luna slice has stabilizer $\mu_4$ (more precisely, it is trivial that every point of the Luna slice has stabilizer containing the diagonal $\mu_4$, and in \Cref{L:Eck-no-3A2} it is shown that there is a neighborhood of the origin so that the points with stabilizer group strictly containing $\mu_4$ form a divisor in this neighborhood).

(2) Since the order of the group $G(3A_2)$ is $2^3\cdot 3$, which is not divisible by $5$, it is enough to analyze the stabilizers of the connected component $D''\cong \TT^2$ acting on $E$, with the action as given in~\eqref{E:LunaAction}. There the action is on the Luna slice, but of course this gives the action on the projectivized Luna slice.

Since it will be convenient in the next proof in this Appendix, we prefer to describe the action of the connected component $\Stab(S_{3A_2})^\circ =D'' \cong \TT^2$ on affine charts for the blowup of $\CC^6$ at the origin. Using the $S_3$ symmetry we can assume that either $T_0 \neq 0$ or $T_\hzero \neq 0$, and thus only deal with the two charts $U_0$ and $U_\hzero$ on the blowup. We shall start with the chart $U_0$, with coordinates~\eqref{E:ChartU0}, and focus on the exceptional divisor $\alpha_0=0$ within it. In the chart $U_0$ the action~\eqref{E:LunaAction} is given by
\begin{equation}\label{E:ActU0}
\begin{aligned}
(\lambda_0,\lambda_1,\lambda_2,\lambda_3)&\cdot (\alpha_0,t_1,t_2,t_\hzero , t_\hone , t_\htwo )=\\ &=\left(\left(\frac{\lambda_0}{\lambda_3}\right)^3\alpha_0,\left(\frac{\lambda_1}{\lambda_0}\right)^3t_1,\left(\frac{\lambda_2}{\lambda_0}\right)^3t_2,
\left(\frac{\lambda_3}{\lambda_0}\right)t_\hzero , \left(\frac{\lambda_1^2\lambda_3}{\lambda_0^3}\right) t_\hone , \left(\frac{\lambda_2^2\lambda_3}{\lambda_0^3}\right)t_\htwo \right)&\quad (\hbox{for $D'$})\,\,\\
&=\left(\lambda_0^3\alpha_0,\lambda_0^{-3}\lambda_1^{3}t_1, \lambda_0^{-6}\lambda_1^{-3}t_2,\lambda_0^{-1}t_\hzero ,\lambda_0^{-3}\lambda_1^{2}t_\hone , \lambda_0^{-5}\lambda_1^{-2}t_\htwo \right)& (\hbox{for $D''$})\,,
\end{aligned}
\end{equation}
where in the last equality we are using that in $\Stab(S_{3A_2})^\circ =D'' \cong \TT^2$ we have $\lambda_3=1$, and $\lambda_2=\lambda_0^{-1}\lambda_1^{-1}$. We note in passing that this chart for the blowup is not equivariant with respect to the full stabilizer group $\Stab(S_{3A_2})$, as any element of $S_3$ that does not fix $0$ would not preserve it.

The proof of (1) now becomes a case by case check, determining the group of $(\lambda_0,\lambda_1)\subseteq \TT^2$ that stabilizes a given point on the exceptional divisor, i.e.,~with coordinate $\alpha_0=0$. Recall that \Cref{L:LunaStability} described the unstable locus on this exceptional divisor, and we are only interested in semi-stable (which are in fact all stable, as this is the Kirwan blowup) orbits.

First, we consider the locus where $t_\hzero\ne 0$. Such a point can only be stabilized if $\lambda_0^{-1}=1$. If all four coordinates $t_1,t_2,t_\hone,t_\htwo$ are non-zero, then the stabilizer is trivial. If $t_1=t_2=0$, then stabilizer consists of $\lambda_1$ such that $\lambda_1^2=\lambda_1^{-2}=1$, i.e.,~is the group $\ZZ_2$. If $t_\hone=t_\htwo=0$, then the stabilizer is $\ZZ_3$.

We now deal with the points where the coordinate $t_\hzero=0$. For such a point to be stable,
\Cref{L:LunaStability}
 implies that
 one of the
following pairs of coordinates must both be non-zero: $(t_1,t_2)$, $(t_1,t_\htwo)$, $(t_\hone,t_2)$ or $(t_\hone,t_\htwo)$. For any pair of non-zero coordinates, the
action of $\TT^2$ on this pair of coordinates is given by multiplying them by $(\lambda_0^a\lambda_1^b,\lambda_0^c\lambda_1^d)$.  If $t_1t_2\ne 0=t_\hone=t_\htwo$, then
the stabilizer must satisfy $\lambda_0^{-3}\lambda_1^3=1$ and $\lambda_0^{-6}\lambda_1^{-3}=1$. Thus $\lambda_0=\lambda_1\rho$ from the first equation, for $\rho$ a third root of unity, and
then the second equation gives $\lambda_1^{-6}\lambda_1^{-3}=1$, so that $\lambda_1$ can then be an arbitrary ninth root of unity.
Altogether the stabilizer group is then $\ZZ_3\times \ZZ_9$.

For the other three cases, the powers $(a,b)$ and $(c,d)$ are linearly independent, and at least one of them is a primitive integral vector. Thus in each of these cases the stabilizer of
the pair where these are the only two coordinates is a cyclic group of order $|ad-bc|$. For $t_1t_\htwo \neq 0=t_2=t_\hone$, the powers are $(-3,3)$ and $(-5,-2)$, and thus we
obtain $\ZZ_{21}$. For $t_\hone t_2\ne 0=t_1=t_\htwo$, the powers are $(-6,-3)$ and $(-3,2)$, and thus we again obtain $\ZZ_{21}$ (as we should, since the coordinate interchange $(t_1,t_\hone)\leftrightarrow (t_\hone,t_\htwo)$ interchanges this with the previous case). Finally, for $t_\hone t_\htwo\ne 0=t_1=t_2$, the
powers are $(-3,2)$ and $(-5,-2)$, and thus we obtain $\ZZ_{16}$. The stabilizers for any case where more than two of the coordinates are non-zero is a subgroup of one of these listed groups, and
thus also has order not divisible by 5.

It remains to consider the chart $U_\hzero$ where $T_\hzero \ne 0$, so that the coordinates on this chart are $(t_0,t_1,t_2,\alpha_\hzero,t_\hone,t_\htwo)$ with $\alpha_i=t_i\alpha_\hzero$. Writing down the action~\eqref{E:LunaAction} in these coordinates gives
\begin{equation}\label{E:ActUhzero}
\begin{aligned}
(\lambda_0,\lambda_1,\lambda_2,\lambda_3)&\cdot (t_0,t_1,t_2,\alpha_\hzero ,t_\hone ,t_\htwo )=\\
&=\left(\tfrac{\lambda_0}{\lambda_3}t_0,\tfrac{\lambda_1^3}{\lambda_0^2\lambda_3}t_1,
\tfrac{\lambda_2^3}{\lambda_0^2\lambda_3}t_2,\tfrac{\lambda_0^2}{\lambda_3^2}\alpha_\hzero,
\tfrac{\lambda_1^2}{\lambda_0^2}t_\hone , \tfrac{\lambda_2^2}{\lambda_0^2}t_\htwo \right)&\quad (\hbox{for $D'$})\,\,\\
&=\left(\lambda_0 t_0,\lambda_0^{-2}\lambda_1^{3}t_1,\lambda_0^{-5}\lambda_1^{-3}t_2,\lambda_0^2\alpha_\hzero ,\lambda_0^{-2}\lambda_1^{2}t_\hone , \lambda_0^{-4}\lambda_1^{-2}t_\htwo \right)&\quad(\hbox{for $D''$})\,.
\end{aligned}
\end{equation}
The only points in the exceptional divisor in the chart $U_\hzero$ whose $S_3$ orbit is disjoint from the chart $U_0$ are those where $t_0=t_1=t_2=0$.
For such a point to be stable, we must then have $t_\hone t_\htwo\ne 0$. In this case the action is by $\lambda_0^{-2}\lambda_1^2$ and $\lambda_0^{-4}\lambda_1^{-2}$. From $\lambda_0^{-2}\lambda_1^2=1$ it then follows that $\lambda_0=\lambda_1\sigma$ for some $\sigma\in\pm 1$, and then from the second equation $\lambda_1$ is a 6th root of unity, so we obtain the stabilizer $\ZZ_2\times \ZZ_6$.
\qed

\subsection{Proof of~\Cref{P:discMK}}\label{S:discMK}
We will work in the Luna slice identified in \Cref{L:3A2slice}, with the action of the toric part of the stabilizer on the affine space $\CC^6$ with coordinates $\alpha_0,\alpha_1,\alpha_2, \alpha_\hzero ,\alpha_\hone ,\alpha_\htwo $ given by~\eqref{E:LunaAction}, and $S_3$ permuting pairs of coordinates with the same index.

We first claim that in this Luna slice the discriminant divisor $D_{A_1}$ locally near the origin is given by the equation
\begin{equation}\label{E:discLunaApp}
 (27\alpha_0^2+4\alpha_\hzero ^3)\cdot(27\alpha_1^2+4\alpha_\hone ^3)\cdot (27\alpha_2^2+4\alpha_\htwo ^3)=0\,.
\end{equation}
More precisely, it is known \cite{duPlessisWall} that the global deformations of the $3A_2$ cubic surfaces versally (and independently) unfold the three $A_2$ singularities (i.e., the global-to-local restriction of deformations is surjective).
Since the Luna slice is smooth of dimension $6$, it follows that locally analytically  (or \'etale locally) the Luna slice is just a product of three copies of the standard deformation space for the $A_2$ singularity (see e.g., \cite[\S 3.4, esp.~Fact~3.13]{cubics}). In particular, it follows that the divisor  $D_{A_1}$  in the Luna slice is locally the union of three (divisorial) components, and we denote it $H\coloneqq H_0\cup H_1\cup H_2$, with $S_3$ permuting the components. This shows that locally analytically there are {\em some} coordinates for the Luna slice so that \eqref{E:discLunaApp} is the equation of the discriminant. We claim that \eqref{E:discLunaApp} is in fact the local equation of the discriminant in {\em our given} coordinates from  \Cref{L:3A2slice}.  By the discussion above, it suffices to show that the hypersurface in the Luna slice given by the equation $27\alpha_0^2+4\alpha_\hzero ^3=0$ in our coordinates is contained in the discriminant.  Taking partial derivatives of our family of cubics parameterized by  the Luna slice in  \Cref{L:3A2slice}
$$
\alpha_0x_0^3 +\alpha_1x_1^3 +\alpha_2x_2^3+\alpha_\hzero x_0^2x_3+\alpha_\hone x_1^2x_3 +\alpha_\htwo x_2^2x_3 +(x_0x_1x_2+x_3^3)
$$
one can check that if $27\alpha_0^2+4\alpha_\hzero ^3=0$, then the associated cubic has a singularity at the point $(1:0:0:-\tfrac{3\alpha_0}{2\alpha_\hzero})$, unless $\alpha_\hzero=0$, in which case $\alpha_0=0$, and the associated cubic has a singularity at $(1:0:0:0)$, establishing the claim.

The Kirwan desingularization proceeds by blowing up the origin of this Luna slice~$\CC^6$, and then taking the GIT quotient of the blowup by the stabilizer $\Stab(S_{3A_2})$ given by~\eqref{E:App-GF3D4}. We will determine the strict transform of $H$ in the coordinate charts on the blowup~$\Bl_0\CC^6\subseteq \CC^6\times\PP^5$. As in the previous proof, by symmetry it is enough to work in the chart $U_0$ given by~\eqref{E:ChartU0}, and in the chart $U_\hzero $ where $t_\hzero \ne 0$.

We start with the chart $U_0$, and use~\eqref{E:ChartU0} to express the proper transform of the divisor~$H$ on it as
$$
 \alpha_0^6\cdot (27+4\alpha_0 t_\hzero ^3)\cdot(27t_1^2+4\alpha_0 t_\hone ^3) \cdot (27 t_2^2+4\alpha_0 t_\htwo ^3)=0\,.
$$
In this chart the exceptional divisor of the blowup is given by $\alpha_0=0$, and thus the strict transform of the divisor~$H$ just omits the $\alpha_0^6$ factor above. To compute the local structure of the Kirwan blowup, we now need the Luna slice for the action of the torus $\TT^2\subseteq \Stab(S_{3A_2})$ given by diagonal matrices with $\lambda_0\lambda_1\lambda_2=1=\lambda_3$ in the chart $U_0$, which is given  in~\eqref{E:ActU0}.
One can check directly that the $\CC^4$ given by the two equations $t_1=t_\hone =1$ is the Luna slice for the action of $\TT^2$ in this chart. Thus the intersection of the strict transform of the divisor~$H$ with this Luna slice is given by
$$
 (27+4\alpha_0 t_\hzero ^3)\cdot (27+4\alpha_0) \cdot (27 t_2^2+4\alpha_0 t_\htwo ^3)=0\,,
$$
which intersects the exceptional divisor $\alpha_0=0$ non-transversally. Indeed, the last factor in this strict transform intersects the exceptional divisor as the intersection of the loci $\alpha_0=0$ and $27 t_2^2+4\alpha_0 t_\htwo ^3=0$, which is non-transversal: they intersect along the codimension 2 space $\alpha_0=t_2=0$, but with multiplicity 2. To ascertain the non-transversality in the moduli space, one needs to further check that taking the quotient by $\TT^2$ and by $S_3$ does not cause the intersection to become transverse.
For completeness, and as a cross-check, we will perform this computation in full detail in the chart $U_\hzero $.

We now work in the chart $U_\hzero $ with coordinates $t_0,t_1,t_2,\alpha_\hzero ,t_\hone ,t_\htwo $, and express $\alpha_i=\alpha_\hzero t_i$. Thus the proper transform of the discriminant divisor $H$ becomes
$$
 \alpha_\hzero ^6\cdot (27t_0^2+4\alpha_\hzero )\cdot(27t_1^2+4\alpha_\hzero t_\hone ^3) \cdot (27 t_2^2+4\alpha_\hzero t_\htwo ^3)\,.
$$
In this chart the exceptional divisor of the blowup is given by $\alpha_\hzero =0$, and thus the strict transform of the divisor~$H$ again just omits the $\alpha_\hzero ^6$ factor. To
compute the local structure of the Kirwan blowup, we need the Luna slice for the action of the torus $\TT^2\subseteq \Stab(S_{3A_2})$ given by diagonal matrices with $\lambda_0\lambda_1\lambda_2=1=\lambda_3$
in the chart $U_\hzero $, where the action is given by~\eqref{E:ActUhzero}.
We claim that the $\CC^4$ given by the two equations $t_\hone =t_\htwo =1$ is a Luna slice for this action of $\TT^2$ on $U_\hzero =\CC^6$. Indeed, for any point $t_0,t_1,t_2,\alpha_\hzero ,t_\hone ,t_\htwo \in U_\hzero $ with $t_\hone t_\htwo \ne 0$, there exist $\lambda_0,\lambda_1$ such that $\lambda_0^{-2}\lambda_1^2=t_\hone^{-1}$ and  $\lambda_0^{-4}\lambda_1^{-2}=t_\htwo^{-1}$, and thus acting by these
$(\lambda_0,\lambda_1)$ shows that the orbit contains a point with $t_\hone=t_\htwo=1$.
The same computation also shows that the stabilizer within $\TT^2$ of any point on this $\CC^4$ slice is at most finite.

Restricting the strict transform of the divisor $H$ to this Luna slice for the~$\TT^2$ gives
$$
 (27t_0^2+4\alpha_\hzero )\cdot(27t_1^2+4\alpha_\hzero ) \cdot (27 t_2^2+4\alpha_\hzero )=0\,.
$$
The intersection of the first factor with the exceptional locus $\alpha_\hzero=0$ is along the codimension two locus $t_0=\alpha_\hzero =0$, along which the first factor intersects the exceptional divisor non-transversally (we observe that by~\Cref{L:LunaStability} a generic point in the chart $U_\hzero$ with coordinates $t_0=\alpha_\hzero=0$ is stable, as indeed $T_\hzero\ne 0$).

What remains to check non-transversality is to handle the finite part of the stabilizer $\Stab(S_{3A_2})$, as in principle the quotient of a non-transversal intersection by a finite group may be transversal.
We will thus verify that the subgroup of $\Stab(S_{3A_2})$ that fixes a generic point of the intersection of the strict transform of~$H$ in chart $U_\hzero $ with the exceptional divisor $\alpha_\hzero =0$ is trivial.

Indeed, such a generic point of intersection has coordinates $(t_0=0,t_1,t_2,\alpha_\hzero=0,t_\hone ,t_\htwo )$, as discussed above.
If an element of the group $\Stab(S_{3A_2})$ as described in~\eqref{E:App-GF3D4} fixes this point, then we claim that the image of this element in $S_3$ must be either the identity or
the involution $1\leftrightarrow 2$, which permutes coordinates $(t_1,t_\hone )\leftrightarrow (t_2,t_\htwo )$ (this statement also appears in the proof of~\Cref{L:Eck-no-3A2}).
To see this, note that at a generic point the only zero coordinate in $\PP^5$ is $t_0$, and thus the
image in $S_3$ of an element of a stabilizer fixing a generic point must fix $0\in\lbrace 0,1,2\rbrace$.
For an element of $D'\subseteq \Stab(S_{3A_2})$ (so that its image in~$S_3$ is the identity), the action~\eqref{E:ActUhzero} on the locus $\alpha_\hzero=t_0=0$ restricts to
$$
(t_1,t_2,t_\hone ,t_\htwo )\mapsto (\lambda_1^3\lambda_0^{-2}\lambda_3^{-1}t_1,\lambda_2^3\lambda_0^{-2}\lambda_3^{-1}t_2,\lambda_1^2\lambda_0^{-2}t_\hone , \lambda_2^2\lambda_0^{-2}t_\htwo )\,.
$$
If a general point $t_1,t_2,t_\hone ,t_\htwo $ is mapped to itself, then from $t_\hone $ coordinate being preserved it follows that $\lambda_1=\sigma_1\lambda_0$ for some $\sigma_1\in\{\pm 1\}$, and
then from the preservation of $t_1$ coordinate it follows that $\lambda_1=\lambda_3$. Similarly from the preservation of the $t_\htwo $ coordinate it follows that $\lambda_2=\sigma_2\lambda_0$
with $\sigma_2\in\{\pm 1\}$ , and then from the preservation of the $t_2$ coordinate it follows that $\lambda_2=\lambda_3$. Thus finally $\lambda_1=\lambda_2=\lambda_3=\sigma_1 \lambda_0$.
Furthermore, equation $\lambda_0\lambda_1\lambda_2=\lambda_3^3$ in the description of $\Stab(S_{3A_2})$ in~\eqref{E:App-GF3D4} gives $\sigma_1=1$, so that
finally $\lambda_0=\lambda_1=\lambda_2=\lambda_3$, and since the matrix is in $\SL(4,\CC)$, they must all be equal to the same fourth root of unity, so that as an
element of $\PGL(4,\CC)$ the matrix is equal to the identity.

Finally, for an element of $\Stab(S_{3A_2})$ whose image in~$S_3$ is the involution $1\leftrightarrow 2$, the action is
$$
(t_1,t_2,t_\hone ,t_\htwo )\mapsto (\lambda_2^3\lambda_0^{-2}\lambda_3^{-1}t_2,\lambda_1^3\lambda_0^{-2}\lambda_3^{-1}t_1,\lambda_2^2\lambda_0^{-2}t_\htwo ,\lambda_1^2\lambda_0^{-2}t_\hone )\,.
$$
If this action preserves a point, then
from the coordinate $t_\hone $ being preserved we see that $\lambda_2^2t_\htwo =\lambda_0^2 t_\hone $.
Fix a square root $x=(t_\hone /t_\htwo )^{1/2}$, so that then $\lambda_2=\sigma_2\lambda_0 x$ for some $\sigma_2\in\{\pm 1\}$. From the coordinate $t_\htwo $ being preserved
we obtain $\lambda_1^2\lambda_0^{-2}t_\hone =t_\htwo $ and thus we see that $\lambda_1=\sigma_1\lambda_0 x^{-1}$ for some $\sigma_1\in\{\pm 1\}$. From the product of
coordinates $t_1t_2$ being preserved we see that $\lambda_2^3\lambda_1^3\lambda_0^{-4}\lambda_3^{-2}=1$. Substituting here our expressions for $\lambda_1$ and $\lambda_2$
yields $\sigma_1\sigma_2\lambda_0^2=\lambda_3^2$, so that $\lambda_3=\gamma_3\lambda_0$ with $\gamma_3^2=\sigma_1\sigma_2$. Furthermore substituting the expressions for all $\lambda$'s in
terms of $\lambda_0$ in the condition $\lambda_0\lambda_1\lambda_2=\lambda_3^3$ yields then $\sigma_1\sigma_2=\gamma_3^3=\sigma_1\sigma_2\gamma_3$, so that $\gamma_3=1$ and
thus $\sigma_1\sigma_2=1$. Finally computing the determinant, the condition for the matrix to lie in $\SL(4,\CC)$ gives $\lambda_0\lambda_1\lambda_2\lambda_3=\lambda_0^4(\sigma_1\sigma_2)=-1$
(Note that this is indeed a minus, as the matrix is not diagonal, but includes a transposition! Recall furthermore that we are thinking about the action of $S_3$, and the involution $1\leftrightarrow 2$ interchanges two pairs of coordinates, $t_1 \leftrightarrow t_2$ and  $t_\hone \leftrightarrow t_\htwo$).
Thus finally $\lambda_0=\lambda_3$ must be some eighth roots of unity, but in this case $t_1$ is mapped to
some eighth root of unity times $x^3$ times $t_2$. Since $t_1,t_2,t_\hone ,t_\htwo $, and thus also $x$, were general, $t_1$ cannot be equal to such a product, and thus there is no stabilizer of this form.
\qed

\bibliographystyle{amsalpha}
\bibliography{bib}

\end{document}